\pgfplotsset{compat=1.8}
    \newcolumntype{P}[1]{>{\centering\arraybackslash}p{#1}}
    \newcolumntype{M}[1]{>{\centering\arraybackslash}m{#1}}
    \providecommand{\keywords}[1]{\textbf{Keywords:} #1}
     \providecommand{\classification}[1]{\textbf{AMS subject classifications:} #1}
\newtheorem{thm}{Theorem}[section]
\newtheorem{cor}[thm]{Corollary}
\newtheorem{lem}[thm]{Lemma}
\newtheorem*{thm*}{Theorem}
\theoremstyle{definition}
\newtheorem{defn}{Definition}[section]
\newtheorem{rem}{Remark}[section]
\newtheorem{assumption}{Assumption}[section]
\newtheorem{example}{Example}[section]
\newcommand{\EE}{{\mathcal{E}}}
\newcommand{\gap}{\mathrm{gap}}
\def\S{\mathbb{S}}
\def\T1{T^1_{\{x_n\}}}
\def\Sp{\mathcal S}
\def\C{\mathcal C}
\def\Re{\mathcal R}
\def\R{\mathbb R}
\def\Z{\mathbb Z}
\def\A{\mathcal A}
\def\Td1{T^{D,1}_{\{x_n\}}}
\def\Ts1{T^{S,1}_{\{x_n\}}}
\def\TV{\text{TV}}
\def\K{\mathcal{K}}
\def\Tid1{T^{D,1}_{\{x_n+\zeta_i\}}}
\def\Tis1{T^{S,1}_{\{x_n+\zeta_i\}}}
\def\Tjs1{T^{S,1}_{\{x_n+\zeta_j\}}}
\def\Tjd1{T^{D,1}_{\{x_n+\zeta_j\}}}
\newcommand*\colvec[1]{
        \global\colveccount#1
        \begin{pmatrix}
        \colvecnext
}
\def\colvecnext#1{
        #1
        \global\advance\colveccount-1
        \ifnum\colveccount>0
                \\
                \expandafter\colvecnext
        \else
                \end{pmatrix}
        \fi
}
\newcommand{\labitem}[2]{%
\def\@itemlabel{\textbf{#1}}
\item
\def\@currentlabel{#1}\label{#2}}
\title{A new path method for  exponential ergodicity of  Markov processes on $\mathbb Z^d$, with applications to  stochastic reaction networks}
\author{
David F. Anderson\thanks{Department of Mathematics, University of
  Wisconsin, Madison, USA.  anderson@math.wisc.edu, grant support from NSF-DMS-2051498 and Army Research Office grant W911NF-18-1-0324.},
\and 
Daniele Cappelletti \thanks{Department of Mathematics, Politecnico di Torino, Turin, Italy. daniele.cappelletti@polito.it, supported by the MIUR grant ‘Dipartimenti di Eccellenza 2018-2022’
(E11G18000350001).}\and
  Wai-Tong Louis Fan \thanks{Department of Mathematics, Indiana University, USA. waifan@iu.edu, grant support from NSF-DMS-1855417, NSF-DMS-2152103 and Office of Naval Research grant TCRI-N00014-19-S-B001.} \and
Jinsu Kim\thanks{Department of Mathematics, Pohang University of Science Technology, Pohang 37673, Republic of Korea. jinsukim@postech.ac.kr, grant support from the National Research Foundation of Korea (NRF) grant funded by the Korea government (MSIT) (No. 2022R1C1C1008491 and No.2021R1A6A1A10042944), POSCO HOLDINGS research fund (2022Q019), and Samsung Electronics Co., Ltd (IO230407-05812-01).}
  }
\begin{document}

\tikzset{every node/.style={font=\tiny,sloped,above}}
\tikzset{every state/.style={rectangle, minimum size=0pt, draw=none, font=\normalsize}}
 \tikzset{bend angle=15}
 
\maketitle

\begin{abstract}
This paper provides a new path method that can be used to determine when an ergodic continuous-time Markov chain on $\Z^d$ converges exponentially fast to its stationary distribution in $L^2$.  Specifically, we provide general conditions that guarantee the positivity of the spectral gap. Importantly, our results do not require the assumption of time-reversibility of the Markov model.
We then apply our new method to the well-studied class of stochastically modeled reaction networks. Notably, we show that each complex-balanced model that is also ``open'' has a positive spectral gap, and is therefore exponentially ergodic. We further illustrate how our results can be applied for models that are not necessarily complex-balanced. Moreover, we provide an example of a detailed-balanced (in the sense of  reaction network theory), and hence complex-balanced, stochastic reaction network that is not exponentially ergodic.  We believe this to be the first such example in the literature. 
\end{abstract}

\keywords{Path method, Exponential ergodicity, continuous-time Markov chains on $\mathbb Z^d$, spectral gap, Poincar\'e inequality, stochastic reaction networks.}

\classification{60J27, 60J28}
%\tableofcontents

\section{Introduction}

Consider an ergodic continuous-time Markov chain $X=(X(t))_{t\geq 0}$ with a discrete countable state space $\mathbb{S}\subset \Z^d$ and  a unique stationary distribution $\pi$. For such a process, a natural  question is:  what is the rate of convergence of the distribution of the process to $\pi$?

Let $\pi(f)=\sum_{x\in \mathbb{S}} f(x)\,\pi(x)$ be the expectation of a function $f$ on $\mathbb{S}$ with respect to $\pi$, and let $\|f\|_{L^2(\pi)}=\big(\pi(f^2)\big)^{1/2}$  be the $L^2$-norm.
Denote by $P_t$  the Markov semigroup for $X$.  
That is, for  $f\in L^2(\pi)$,   $P_t f$ is the function on $\mathbb{S}$ whose $x$th element is $P_tf (x) = \mathbb{E}_x[f(X(t)]$. 
Then we  say that the distribution \textit{converges exponentially fast in $L^2$} if there is a constant $C>0$ so that 
 for each compactly supported function $f$ (or, equivalently, for each $f \in L^2(\pi)$) we have  
 \begin{equation}
 \label{eq:needed21}
\left \|P_tf-\pi(f) \right \|_{L^2(\pi)} \leq \| f - \pi(f)\|_{L^2(\pi)} e^{-Ct}, \text{ for all } t \ge 0.
\end{equation}

Of course, we can also have convergence in other norms.  For example, we have exponential convergence in the total variation norm if there is an $\eta >0$ such that for all $x\in \mathbb S$ we have
\begin{align}\label{eq:toprove}
\Vert P^t(x,\cdot) - \pi(\cdot) \Vert_{\TV} \le B(x) e^{-\eta t}, \quad \text{for all } t \ge 0,
\end{align}
for some function $B(x):\mathbb{S} \to \R_{\ge0}$ \cite{down1995exponential},  where $P^t(x,\cdot)$ is the probability distribution of the process at time $t$ with initial condition $x$ and where the total variation distance between two probability measures on a measurable space $(\Omega,\mathcal{F})$ is defined as $\Vert \mu - \nu\Vert_{\TV} := \sup_{A\in \mathcal{F}}|\mu(A) - \nu(A)|$.  In our case of a discrete state space, $\|\mu-\nu\|_{\TV} = \frac12 \sum_{x\in \mathbb{S}} |\mu(x) - \nu(x)|$, and so is an $L^1$ norm \cite{YuvalLevinMixing}.  Hence, $L^2$ exponential convergence immediately implies exponential convergence in total variation via the Cauchy-Schwartz inequality.

A natural, and related, way to quantify the convergence of the distribution is via the mixing time of the process.    Let $\varepsilon \in (0,\tfrac12)$.  The \textit{mixing time} with distance $\varepsilon$ for the process with initial condition $x\in \S$ is
\begin{align}\label{eq:mixing time}
\tau_x^\varepsilon := \inf_{t \ge 0} \{\Vert P^t(x,\cdot) - \pi(\cdot) \Vert_{\TV} \le \varepsilon\}. 
\end{align}
  See \cite{YuvalLevinMixing} for a pedagogical introduction to mixing times.  Hence, when \eqref{eq:toprove} holds, an upper bound for the mixing time follows since $\tau_x^\varepsilon \le \frac1\eta\left[ \ln(B(x) + \ln(\varepsilon^{-1})\right] = O(\ln(B(x)))$.

 For continuous-time Markov chains on a countable state space, common methods to demonstrate exponential ergodicity include (i) the use of Foster-Lyapunov functions \cite{mixing_And_Kim,superlya2012, bakry2008rate, down1995exponential} and (ii) analysis of the spectral gap of the model \cite{bakry2008rate, YuvalLevinMixing, Liggett89, Han16}. We take the second approach and in the first part of this paper we provide new sufficient conditions, in the form of a new path method, that imply that the spectral gap is positive (and, in particular, that the Poincar\'e inequality holds) which in turn implies that \eqref{eq:needed21} holds. This, in turn, implies the mixing time is of order $-\log(\pi(x))$, when $X(0)=x$. These results are located in Theorem \ref{thm:main for poincare} and Corollaries \ref{cor:main} and \ref{cor:relaxed}.  Very loosely, our results apply to models whose stationary distributions have tails that decrease quickly (faster than exponential, such as the Poisson distribution, among others) and that allow for transitions of the form $\pm e_i$, where $\{e_i\}$ is the canonical basis for $\Z^d$ (there are more technical assumptions related to lower bounds for the rates of these transitions).  Importantly, we \textit{do not} assume that the models are time-reversible. That is, we need not have that the stationary distribution $\pi$ satisfies $\pi(x)q(x,z)=\pi(z)q(z,x)$ for all states $x,z$, with $q(\cdot,\cdot)$ denoting the transition rates. In fact, we even allow our models to be irreversible in that a positive rate of transition from state $x$ to state $z$ need not imply there is a positive rate of transition from state $z$ to state $x$.

To illustrate the new path method, we present three key examples. First, in Example \ref{ex:motivation}, a simple birth-death model is provided in order to motivate our new approach.  In Example \ref{ex:key example}, we present a Markov system where the well-known ``canonical path method'' falls short, while our method establishes exponential ergodicity.  Finally, in Example \ref{ex:non expo}, we demonstrate the necessity of the assumption pertaining to transitions of the form $\pm e_i$ for the class of models we are interested in (stochastic reaction networks).

In the second part of this paper, we apply our method to the class of models that motivated this work: stochastic reaction networks.  Stochastic reaction networks are used ubiquitously in the biology literature to model biological interaction systems (which include intracellular processes, viral  infections,  signaling systems, etc.).  	
  Reaction networks are often depicted  via a \textit{reaction graph}, which is a graphical representation of the interactions between the  constituent ``species'' of the model.  See  Figure \ref{figure:EnvZ_model1} for a  network describing the intracellular interactions of the EnvZ/OmpR signal transduction system in \textit{E.~coli} \cite{AEJ2014, Sh-F}.   
      \begin{figure}
\centering
{\includegraphics[width=8cm]{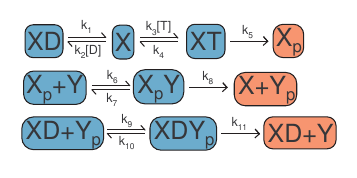}}
\caption{Network for the EnvZ-OmpR signaling system in \textit{E.~coli}.}
	\label{figure:EnvZ_model1}
	\end{figure} 
 The constituent ``species'' are written in black, 
 and the reactions are the transitions implied by the directed graph.  For example, the reaction  $X_P + Y \to X_pY$ uses one $X_P$ molecule and one $Y$ molecule to generate one $X_pY$ molecule.  The rate constants, such as $k_1$, are written next to the arrows.  The standard stochastic model then treats the system as a continuous-time Markov chain on $\Z^d_{\ge 0}$, with $X_i(t)$ giving the count of species $i$ at time $t$.  Transitions for the model are then implied by the reactions.  The standard intensity functions for the transitions (called mass-action kinetics) are polynomial in form.  These models will be introduced in Section \ref{sec:modelsandresults}.

Often, the goal of the mathematical theory for reaction networks is to relate topological conditions of the underlying reaction network (which are easy to check) to the qualitative behavior of the associated dynamical system (which is typically difficult to quantify). In previous research pertaining to the discrete stochastic models, classes of reaction networks characterized by graph topological conditions have been identified for which key long-term behaviors are guaranteed, independent of system parameters. These behaviors include non-explosion \cite{anderson2018non}, positive recurrence \cite{anderson2020stochastically, anderson2020tier, AndProdForm, anderson2018some}, and large deviation principles \cite{agazzi2018large, agazzi2018geometry, anderson2020tier}. However, finding structural classifications for exponential ergodicity has not been widely studied.  The exception is \cite{mixing_And_Kim}, which used Foster-Lyapunov functions to prove exponential ergodicity for a class of stochastic reaction networks. Therefore, the second goal of this paper is to continue to fill this gap in our understanding. 
Specifically,  we utilize our methods via Theorem \ref{thm:main for poincare} and its corollaries to prove exponential ergodicity in $L^2$ for a subclass 
 of stochastic reaction networks that are ``open,''  meaning that each species can enter and leave the system. 
 Notably, our results apply (but are not limited to) \emph{complex-balanced} models. In particular, we are able to state the following result:

\smallskip
{\it An open and complex-balanced stochastic reaction network is $L^2$-exponentially ergodic.}
\smallskip

\noindent
A precise statement is given in Corollary~\ref{cor:cb_inout}, and a more general formulation is given in Corollary~\ref{cor:cb_inout_relaxed}.
 The ``complex-balanced'' condition of reaction network theory is a balance condition similar to the ``traffic conditions'' found in probability theory \cite{Kelly1979}; the concept in the setting of reaction network theory was introduced in the seminal papers \cite{Feinberg72, Horn72, HornJack72} and has been an object of study ever since (for example, see \cite{AndProdForm, AndGAC_ONE2011,CraciunShiu09, CW2016}).

Finally, in Example \ref{ex:non expo} we provide an example of a detailed-balanced, and hence complex-balanced, reaction network (which is time-reversible in the sense of Markov chains) that is not exponentially ergodic.  We believe this to be the first such model analyzed in the literature.

The remainder of the paper is outlined as follows. In Section \ref{sec:main}, we introduce our main technical theorem and its corollaries along with the needed background pertaining to analysis of the spectral gap. In Section \ref{sec:modelsandresults}, we formally introduce stochastic reaction networks and then provide our main results pertaining to this class of models (which motivated the present work). We anticipate interest from a broad range of researchers, including those from the biosciences, and so add necessary technical background material in the appendix.  We also relegate certain proofs from Section \ref{sec:main} to the appendices.

\section{Main analytical results}\label{sec:main}

In Section \ref{sec:models}, we provide a detailed overview of how to apply the analytical methods relating the Poincar\'e inequality to $L^2$ exponential convergence in the current context (continuous-time Markov chain on a discrete state space, with time-reversibility \textit{not} assumed).  Importantly, in our Assumption \ref{intcnd} we will carefully specify the regularity conditions needed on the Markov model that guarantee the objects under consideration are well-defined and allow the analytical methods to go through.  While much of this material is, to a greater or lesser extent,  known, see for example \cite{bakry2014analysis, MR1106707, Liggett89, rockner2001weak, Han16}, all the references we are aware of on the subject make assumptions that we do not, with the most notable being the common assumption that the Markov process is time-reversible. We therefore provide our own proofs for the preliminary results.  We note that our proofs are somewhat similar to those found in the concise lecture notes by Ramon van Handel \cite{Han16}. 
However, in our opinion there are important technical differences, especially related to when certain limits and sums can be exchanged.
 Still, in the interest of readability we have placed many of the proofs from this section in the appendices.

Next, in Section \ref{sec:main_result}, we provide the statement and proof of our first main result, which provides sufficient conditions for $L^2$ exponential ergodicity of continuous-time Markov chains  on $\Z^d$.  In Section \ref{sec:corrolaries}, we provide corollaries that are easier to apply  for continuous-time Markov chains on $\Z^d_{\geq0}$.  Finally, in Section \ref{subsec:canonical}, we provide two important examples.  In the first example, we demonstrates why new theory was needed by showing that the canonical path method stands silent on  certain models of interest, while our new theory is strong enough to prove exponential ergodicity.  In the second example, we demonstrate the necessity of one of our main structural/analytic assumptions found in the corollaries: that transitions of the form $\pm e_i$ have rates that are lower bounded (together with other technical assumptions).  Moreover, we believe that this is the first example in the literature of a complex-balanced (and detailed-balanced) stochastic reaction network that is shown to be not exponentially ergodic  in $L^2$. 

\subsection{Models and methods}
\label{sec:models}

\subsubsection{The basic model}
We assume basic knowledge of the theory of continuous-time Markov chains, as  can be found, for example, in \cite{NorrisMC97}.
For some positive integer $d$, we let $X$ be a continuous-time  
Markov chain with a countable state space $\S \subset \Z^d$ and stationary distribution $\pi$. For $x, z\in \mathbb{S}$ with $x\neq z$, we denote by  $q(x,z)$  the transition rate from state $x$ to $z$; that is, 
\begin{equation}\label{qxz}
    \mathbb{P}(X(t+\Delta t)=z \ | \ X(t)=x)=q(x,z)\Delta t + o(\Delta t),  \quad \text{as}\quad  \Delta t \to 0.
\end{equation}
We will denote $q_x  = \sum_{z\in \mathbb{S}\setminus \{x\}} q(x,z)$ and $q(x,x) = -q_x$.  
We  will not assume that $X$ is time-reversible.  That is, we allow  the situation $\pi(x) q(x,z)\neq \pi(z) q(z,x)$ for some $x,z\in \S$. In fact, we even allow for the state space to be irreversible, that is, $q(x,z)>0$ and $q(z,x)=0$ for some $x,z\in \S$.

We will assume throughout that our collection of rates, $\{q(x,z)\}$, satisfies the following regularity assumption.

\begin{assumption}\label{intcnd}
The collection of rates $\{q(x,z)\}$ satisfies the following four conditions. 
\begin{enumerate}[(i)]
\item For $x,z\in \mathbb{S}$, with $x\neq z$, we have $0\leq q(x,z)<\infty$.
\item For each $x\in \mathbb{S}$, we have $q_x = \sum_{z\in \mathbb{S}\setminus\{x\}}q(x,z)<\infty$.
\item For each $z\in \mathbb{S}$, we have $\sup_{x\in \mathbb{S}\setminus\{z\}}q(x,z)<\infty$.
\item $X$ is an irreducible and non-explosive continuous-time Markov chain on a countable state space $\S\subset \Z^d$, with unique stationary distribution $\pi$.
\hfill $\triangle$
\end{enumerate}
\end{assumption}

The first condition is standard and states that all rates are non-negative and bounded.  The second condition guarantees that the holding time in each state is positive with probability one (no instantaneous states). The third condition is more technical in nature, but is required, for example, in the proof of Lemma \ref{lemma:compact_f}. Finally, the last condition guarantees that the distribution of the process $X$ is uniquely determined by the transition rates $\{q(x,z)\}$, and that the distribution of $X(t)$ converges to $\pi$, as $t$ goes to infinity. The most common way to ensure that condition (iv) holds is to assume that $\sum_x \pi(x) q_x < \infty$ \cite{Nonexplo}. Assumption~\ref{intcnd} ensures that the Kolmogorov forward equations 
\begin{equation}\label{eq:kolmogorov_forward}
\frac{d P_x(X(t)=y)}{dt}=\left(\sum_{z\in\S\setminus\{y\}}q(z,y)P_x(X(t)=z)\right) -q_yP_x(X(t)=y)
\end{equation}
hold for all $x,y\in \S$ and $t\in [0,\infty)$, where $P_x(\cdot):=P(\cdot|X(0)=x)$ (see the appendix in \cite{Nonexplo}). As a consequence, we have that $\pi$ satisfies
\begin{equation}\label{eq:equilibrium}
\pi(y)\sum_{z\in \mathbb{S}\setminus\{y\}}q(y,z)=\sum_{z\in \mathbb{S}\setminus\{y\}}\pi(z)q(z,y).
\end{equation}

Denote by $\mathcal{A}$ and $Dom(\mathcal{A})$  the infinitesimal generator of $X$ and its domain in $L^2(\pi)$, respectively (see  \cite[(Chapter 1)]{ethier2009markov} for relevant definitions).  Then for $f \in Dom(\mathcal{A})$,
\begin{equation}\label{Def:Generator}
\mathcal{A} f(x)=\sum_{z\in\S\setminus\{x\}}(f(z)-f(x))\,q(x,z).
\end{equation}
Let $\mathcal{B}_c$ be the collection of compactly supported functions on $\S$. The following technical lemma demonstrates the usefulness of Assumption \ref{intcnd}.  For completeness, we placed the proof in Appendix \ref{appendix:proof_1234}.

\begin{lem}\label{lemma:compact_f}
Let  $X$ be a continuous-time Markov chain on $\mathbb{S} \subset \Z^d$ with transition rates $\{q(x,z)\}$ that satisfy Assumption~\ref{intcnd}.  
Then, $\mathcal{B}_c \subset Dom(\mathcal{A})$ and
\begin{equation}\label{eq:tocheck323}
\|\mathcal A f\|_\infty < \infty \qquad \text{for }f\in \mathcal{B}_c.
\end{equation}
\end{lem}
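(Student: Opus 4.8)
The plan is to treat the two assertions separately: first verify that the pointwise expression \eqref{Def:Generator} is well defined and uniformly bounded, and then upgrade this to membership in the $L^2(\pi)$-domain of $\mathcal A$ by identifying the pointwise action of $\mathcal A$ with the $L^2(\pi)$-derivative of the semigroup at $t=0$. Throughout, fix $f\in\mathcal B_c$ and let $F\subset\mathbb S$ denote its (finite) support.

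For the sup-norm bound I would split into two cases. If $x\notin F$, then $f(x)=0$ and only the finitely many states $z\in F$ contribute, so
\begin{equation*}
|\mathcal A f(x)|=\Big|\sum_{z\in F}f(z)\,q(x,z)\Big|\le \|f\|_\infty\sum_{z\in F}q(x,z)\le \|f\|_\infty\sum_{z\in F}\sup_{x'\in\mathbb S\setminus\{z\}}q(x',z).
\end{equation*}
The right-hand side is finite and independent of $x$ precisely because of Assumption~\ref{intcnd}(iii); this is exactly where that hypothesis is needed, since without a uniform bound on the incoming rates the sum over $z\in F$ could blow up as $x$ ranges over $\mathbb S$. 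If instead $x\in F$, the same estimate controls the ``incoming'' part $\sum_{z}f(z)q(x,z)$, while the ``outgoing'' part is bounded by $|f(x)|\,q_x\le\|f\|_\infty\max_{x'\in F}q_{x'}$, which is finite because $F$ is finite and each $q_{x'}<\infty$ by Assumption~\ref{intcnd}(ii). Combining the two cases yields a bound of the form $\|\mathcal A f\|_\infty\le\|f\|_\infty\big(\sum_{z\in F}\sup_{x'}q(x',z)+\max_{x'\in F}q_{x'}\big)<\infty$, which proves \eqref{eq:tocheck323}.

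To show $f\in Dom(\mathcal A)$, I would compare the difference quotient $(P_tf-f)/t$ with $\mathcal A f$ in $L^2(\pi)$. Since $X$ is non-explosive (Assumption~\ref{intcnd}(iv)) and $\mathcal A f$ is bounded by the previous step, Dynkin's formula (equivalently, the integrated backward equation) gives $P_tf(x)-f(x)=\int_0^t P_s(\mathcal A f)(x)\,ds$ for every $x\in\mathbb S$ and $t\ge0$, where the interchange of expectation and time-integral is justified by $\|\mathcal A f\|_\infty<\infty$ together with Fubini. Consequently
\begin{equation*}
\Big\|\tfrac{P_tf-f}{t}-\mathcal A f\Big\|_{L^2(\pi)}=\Big\|\tfrac1t\int_0^t\big(P_s(\mathcal A f)-\mathcal A f\big)\,ds\Big\|_{L^2(\pi)}\le \frac1t\int_0^t\big\|P_s(\mathcal A f)-\mathcal A f\big\|_{L^2(\pi)}\,ds,
\end{equation*}
so it suffices to show that $\|P_s g-g\|_{L^2(\pi)}\to0$ as $s\downarrow0$ for the bounded function $g=\mathcal A f$. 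For this I would argue pointwise: because each holding time is a.s.\ positive (Assumption~\ref{intcnd}(ii)) and the paths are right-continuous on the discrete space $\mathbb S$, we have $X(s)=x$ for all sufficiently small $s$, $\mathbb P_x$-a.s., hence $P_sg(x)=\mathbb E_x[g(X(s))]\to g(x)$ for each $x$. Since $|P_sg-g|\le 2\|g\|_\infty$ and $\pi$ is a probability measure, dominated convergence upgrades this to $L^2(\pi)$-convergence, and the time-average above then tends to $0$ as $t\downarrow0$. Thus the $L^2(\pi)$-limit of $(P_tf-f)/t$ exists and equals $\mathcal A f$, giving $f\in Dom(\mathcal A)$.

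The sup-norm estimate is routine once Assumption~\ref{intcnd}(iii) is available. The main obstacle is the second part: one must keep the pointwise (Dynkin) identity and the $L^2(\pi)$ convergence statements cleanly separated, justifying the Fubini interchange through the boundedness of $\mathcal A f$ and obtaining the continuity of the semigroup near $t=0$ from the path regularity of $X$ rather than from any a priori strong-continuity assumption on $L^2(\pi)$. This is precisely the sort of ``exchange of limits and sums'' subtlety flagged in Section~\ref{sec:main}.
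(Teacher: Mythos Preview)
Your proof is correct and follows essentially the same route as the paper: both bound $\|\mathcal A f\|_\infty$ by splitting into the ``incoming'' part over the finite support (using Assumption~\ref{intcnd}(iii)) and the ``outgoing'' part (using Assumption~\ref{intcnd}(ii)), and then both use Dynkin's formula to write $(P_tf-f)/t-\mathcal A f$ as a time-average of $P_s(\mathcal A f)-\mathcal A f$ and pass to the limit via dominated convergence. The only cosmetic difference is that you apply Minkowski's inequality to bring the $L^2(\pi)$-norm inside the time integral before invoking pointwise convergence, whereas the paper keeps the argument pointwise throughout and applies dominated convergence directly at the end; the paper also cites \cite[Theorem~4]{Nonexplo} for the martingale identity, which is what you call Dynkin's formula.
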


\begin{rem}
 The proof of Lemma~\ref{lemma:compact_f} still holds if we only assume points (i)-(iii) in Assumption~\ref{intcnd}, provided that we consider $X$ to be the \emph{minimal} process, i.e.\ the process that is absorbed in a cemetery state $\infty$ after a potential explosion. In this case, \cite[Theorem 4]{Nonexplo} can still be applied and the result be derived.\hfill $\dagger$
\end{rem}

\subsubsection{Methods: Poincar\'e inequalities and exponential \texorpdfstring{$L^2$}{Lg} convergence}

We make precise some notation from the introduction and introduce some new ones. For the stationary distribution $\pi$  let    
\[
    \langle f,\,g \rangle_{L^2(\pi)}:=\sum_{x\in \S}f(x)g(x)\pi(x)
\]
be the inner product of the $L^2$-space $L^2(\pi)$.  We let $\pi(f)=\sum_{x\in \mathbb{S}} f(x)\,\pi(x)$ be the expectation of $f$,
$\|f\|_{L^2(\pi)}=\big(\pi(f^2)\big)^{1/2}$  the $L^2$-norm,
and
$Var_{\pi}(f)=\pi (f^2) - \left(\pi (f)\right)^2= \left\|f-\pi (f) \right\|^2_{L^2(\pi)}$
 the variance, all with respect to  $\pi$. 

Denote by $P_t$  the Markov semigroup for the process $X$.  
%That is, for  $f\in L^2(\pi)$,   $P_t f$ is the function on $\mathbb{S}$ whose $x$th element is $P_tf (x) = \mathbb{E}_x[f(X(t)]$.  
Recall that if $f \in Dom(\mathcal{A})$, then $P_t f\in Dom(\mathcal{A})$ and, moreover,  $\mathcal{A} P_t f=P_t \mathcal{A} f$ \cite[Proposition 1.5]{ethier2009markov}.

For $f \in L^2(\pi)$ and $g\in Dom(\mathcal{A})$, we
define
\begin{align}
\EE(f,g):=&
-\langle f,\,\mathcal{A}g \rangle_{L^2(\pi)} = -\sum_{x,z\in \S} f(x)\,\big(g(z)-g(x)\big)\,\pi(x)\,q(x,z). \label{Def:DFa}
\end{align}
The bilinear operator $\EE$ is referred to as the Dirichlet form of the process $X$ (see \cite[Chapter 1]{ma2012introduction}).  Of particular interest will be $\EE(f,f)$ for $f$ from a rich enough class of functions.
A slightly different representation will be useful.  So, for $f \in Dom(\mathcal{A})$,  let 
\begin{align}\label{eq:78657857}
\EE^*(f):=&\frac{1}{2}\sum_{x,z\in \mathbb{S}}\big(f(x)-f(z)\big)^2\,\pi(x)\,q(x,z).
\end{align}

The proof of the following lemma is found in Appendix \ref{appendix:proof_9808097}.

\begin{lem}\label{lem:eff}
Suppose that $X$ satisfies Assumption~\ref{intcnd}.
For  $f\in Dom(\mathcal{A})$ with $\|f\|_\infty<\infty$, both the sum \eqref{Def:DFa} for $\EE(f,f)$ and the sum \eqref{eq:78657857} for $\EE^*(f)$ converge absolutely.  Also, $\EE(f,f)=\EE^*(f)$.
\end{lem}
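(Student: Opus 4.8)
The plan is to reduce everything to a pointwise \emph{carr\'e du champ} identity together with the stationarity relation \eqref{eq:equilibrium}, and then to spend the real effort on convergence. First I would record the pointwise facts available for bounded $f$: since $\|f\|_\infty<\infty$, for each fixed $x$ the sums $\mathcal{A}f(x)=\sum_z(f(z)-f(x))q(x,z)$ and $\mathcal{A}(f^2)(x)=\sum_z(f(z)^2-f(x)^2)q(x,z)$ converge absolutely, being dominated by $2\|f\|_\infty q_x$ and $2\|f\|_\infty^2 q_x$ respectively (finite by Assumption~\ref{intcnd}(ii)). The same bound shows $\Gamma(f)(x):=\sum_z(f(z)-f(x))^2q(x,z)\ge 0$ is finite, and a direct expansion yields the pointwise identity $\Gamma(f)(x)=\mathcal{A}(f^2)(x)-2f(x)\mathcal{A}f(x)$.

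The algebraic skeleton then comes from the elementary rewriting $f(x)(f(z)-f(x))=\tfrac12(f(z)^2-f(x)^2)-\tfrac12(f(z)-f(x))^2$. Summing formally against $\pi(x)q(x,z)$ gives
\[
\EE(f,f)=-\tfrac12\,\pi(\mathcal{A}(f^2))+\EE^*(f),
\]
so that $\EE(f,f)=\EE^*(f)$ is equivalent to the two statements (a)~$\EE^*(f)=\tfrac12\,\pi(\Gamma(f))<\infty$ and (b)~$\pi(\mathcal{A}(f^2))=0$. For $\EE^*$ the terms are nonnegative, so absolute convergence is the same as ordinary convergence and the only content is \emph{finiteness}, i.e.\ (a). For the $\EE(f,f)$ double sum I would bound the inner $z$-sum by Cauchy--Schwarz, $\sum_z|f(z)-f(x)|q(x,z)\le(q_x\,\Gamma(f)(x))^{1/2}$, and then apply Cauchy--Schwarz in $x$, reducing absolute convergence to the finiteness of $\pi(\Gamma(f))$ together with $\sum_x f(x)^2\pi(x)q_x$; I would flag at once that this last bound is lossy (for $f\equiv 1$ it is $+\infty$ while the double sum is trivially $0$), so a cancellation-aware argument is ultimately needed here.

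The crux, where I expect essentially all the difficulty to lie, is establishing (a) and (b) \emph{without} the aid of $\sum_x\pi(x)q_x<\infty$, which Assumption~\ref{intcnd} does not provide. This is precisely the ``exchange of limits and sums'' issue the text warns about: one cannot split $\pi(\mathcal{A}(f^2))$ into $\sum_x\pi(x)\sum_z f(z)^2q(x,z)$ minus $\sum_x\pi(x)f(x)^2q_x$ and cancel, since each piece can be $+\infty$ (already for $f\equiv1$). My plan is to use an exhausting sequence of finite sets $K_n\uparrow\S$ and the truncated version of the carr\'e du champ identity, $\tfrac12\sum_{x\in K_n}\pi(x)\Gamma(f)(x)=\tfrac12\sum_{x\in K_n}\pi(x)\mathcal{A}(f^2)(x)-\sum_{x\in K_n}\pi(x)f(x)\mathcal{A}f(x)$. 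The last sum converges absolutely as $n\to\infty$ (by Cauchy--Schwarz, since $f,\mathcal{A}f\in L^2(\pi)$), while the left side increases monotonically to $\EE^*(f)$; passing to the limit in the $\mathcal{A}(f^2)$-partial sums using monotone convergence on $\sum_{x\in K_n}\pi(x)q(x,z)\uparrow\pi(z)q_z$ from \eqref{eq:equilibrium}, and identifying the limit as $0$, would deliver (a) and (b) simultaneously. The delicate point to watch is exactly that the two one-signed partial sums may each diverge, so the limit must be extracted from the combined expression rather than termwise.

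A cleaner alternative I would keep in reserve for the finiteness step (a) is probabilistic. Under stationarity the Dynkin decomposition $f(X_t)-f(X_0)=M_t+\int_0^t\mathcal{A}f(X_s)\,ds$ gives $\mathbb{E}_\pi[M_t^2]=t\,\pi(\Gamma(f))$, while
\[
\|M_t\|_{L^2(\mathbb{P}_\pi)}\le\|f(X_t)-f(X_0)\|_{L^2}+\Big\|\textstyle\int_0^t\mathcal{A}f(X_s)\,ds\Big\|_{L^2}\le 2\|f\|_{L^2(\pi)}+t\,\|\mathcal{A}f\|_{L^2(\pi)}<\infty,
\]
which bounds $\pi(\Gamma(f))$ and hence gives $\EE^*(f)<\infty$ directly. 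Once $\pi(\Gamma(f))<\infty$ we have $\mathcal{A}(f^2)=\Gamma(f)+2f\mathcal{A}f\in L^1(\pi)$, so Dynkin applied to $f^2$ together with $\pi P_t=\pi$ yields $t\,\pi(\mathcal{A}(f^2))=\mathbb{E}_\pi[f^2(X_t)]-\mathbb{E}_\pi[f^2(X_0)]=0$, i.e.\ (b). Either route, once (a) and (b) hold, closes the identity via the decomposition of the second paragraph, and the Cauchy--Schwarz bounds then give the claimed absolute convergence of both sums.
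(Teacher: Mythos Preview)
Your approach diverges substantially from the paper's. The paper's proof is essentially a two-line computation once it invokes $\sum_{x}\pi(x)q_x<\infty$: this immediately gives absolute convergence of both double sums via the crude bounds $(f(x)-f(z))^2\le 4\|f\|_\infty^2$ and $|f(x)(f(z)-f(x))|\le 2\|f\|_\infty^2$, after which the identity $\EE(f,f)=\EE^*(f)$ follows by freely rearranging and using \eqref{eq:equilibrium} to cancel $\sum_x f(x)^2\pi(x)q_x$ against $\sum_z f(z)^2\pi(z)q_z$. You correctly observe that $\sum_x\pi(x)q_x<\infty$ is not stated in Assumption~\ref{intcnd}, and indeed it does not follow from (i)--(iv) alone (a reversible birth--death chain with $q(x,x\pm1)=(x+1)^2$ satisfies all four conditions yet has $\pi(x)q_x$ constant in $x$). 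The paper nonetheless uses this bound without further comment; whether it is an implicit standing hypothesis or an oversight, it is what makes their argument short.

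Relative to that, your carr\'e-du-champ/Dynkin program is more ambitious but not yet closed. In Route~1 you correctly isolate the obstacle---both one-signed pieces of $\sum_{x\in K_n}\pi(x)\mathcal{A}(f^2)(x)$ may diverge---but you do not actually extract the limit from the combined expression; you only say it must be done. In Route~2 the martingale-variance step cleanly yields $\pi(\Gamma(f))<\infty$, but the step ``Dynkin applied to $f^2$'' needs $f^2\in Dom(\mathcal{A})$, and you have only shown $\mathcal{A}(f^2)=\Gamma(f)+2f\mathcal{A}f\in L^1(\pi)$, not $L^2(\pi)$; you would also need the martingale decomposition for general bounded $f\in Dom(\mathcal{A})$, whereas the paper only verifies it for $f\in\mathcal{B}_c$. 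Finally, your Cauchy--Schwarz bound for absolute convergence of the $\EE(f,f)$ sum lands on $\sum_x f(x)^2\pi(x)q_x$, which can be infinite, and the promised ``cancellation-aware'' replacement is never supplied. In short: if $\sum_x\pi(x)q_x<\infty$ is taken as given, the paper's direct rearrangement is both correct and far simpler than what you propose; if it is not, your outline still has real gaps to fill.
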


 With Lemma \ref{lem:eff} in hand, we define the \emph{spectral gap} as   
\begin{align}\label{Def:GapA}
\gap(\EE) 
&:= \inf\left\{ \frac{\EE(f,f)}{Var_{\pi}(f)}:\;f\in Dom(\mathcal{A}), \|f\|_\infty <\infty ,Var_{\pi}(f)\neq 0 \right\}.
\end{align}
Note that  $\gap(\EE)> 0$  if and only if there is a $C>0$ for which 
\begin{equation}\label{PI}
  \EE(f,f) \geq  C\, Var_{\pi}(f)
\end{equation}
for all bounded $f\in Dom(\mathcal{A})$.   The inequality \eqref{PI} is sometimes called the \textit{Poincar\'e inequality}. In light of Lemma \ref{lem:eff}, we have the following alternative representation for $\text{gap}(\EE)$,
\begin{align}\label{eq:2222NEW}
    \text{gap}(\EE) = \inf\left\{\EE^*(f) : f \in Dom(\A), \|f\|_\infty < \infty,  \pi(f) = 0, \pi(f^2) = 1\right\},
\end{align}
which will be useful to us throughout.

The following theorem is key to our analysis, and explains our interest in the spectral gap. In short, the result states that in our setting $\text{gap}(\EE)>0$ implies exponential $L^2$ convergence of the distribution of $X(t)$ to $\pi$, as $t$ tends to infinity. 
As mentioned in the preamble to this section, the result is considered to be well-known. However, we were not able to find a version exactly fitting our needs.   
We also stress again that the unpublished lecture notes from Ramon van Handel has a proof that is very similar to ours (though there are important differences on the justification for passing certain limits through sums).  In particular, see \cite[Theorem 2.18 and Remark 2.19]{Han16}.

\begin{thm}\label{thm:adlkjfkd}
Suppose Assumption~\ref{intcnd} is satisfied.  If the process $X$ is time-reversible, then the following statements are equivalent.
\begin{enumerate}[(i)]
    \item $\gap(\EE) \geq  C$ for some $C\in(0,\infty)$.
    \item For all $f\in \mathcal{B}_c$ and $t\geq 0$,    \begin{equation}\label{ExpoEr2}
\left \|P_tf-\pi(f) \right \|_{L^2(\pi)} \leq e^{-2Ct}\, \left \|f-\pi(f) \right \|_{L^2(\pi)}.
\end{equation}
\item   For  $f\in \mathcal{B}_c$ there is a constant $C_f$ so that for all $t\geq 0$,    \begin{equation}\label{ExpoEr3}
\left \|P_tf-\pi(f) \right \|_{L^2(\pi)} \leq C_f e^{-2Ct}.
\end{equation} 
\end{enumerate}
Moreover, if the process is not time-reversible, we  have the implications $(i) \iff (ii)\implies (iii)$. 
\end{thm}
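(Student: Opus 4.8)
The plan is to route every implication through the evolution of the variance along the semigroup, $V(t):=Var_{\pi}(P_t f)=\|P_t f-\pi(f)\|_{L^2(\pi)}^2$, for $f\in\mathcal{B}_c$. The equivalences $(i)\iff(ii)$ and the implication $(ii)\implies(iii)$ rest only on a differential identity for $V$ and require no reversibility, so they will cover the non-reversible claim $(i)\iff(ii)\implies(iii)$; the single genuinely reversibility-dependent step is the converse $(iii)\implies(i)$, where I will invoke self-adjointness and the spectral theorem.

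First I would record the regularity needed to differentiate $V$. For $f\in\mathcal{B}_c$ we have $f\in Dom(\A)$ with $\|\A f\|_\infty<\infty$ by Lemma~\ref{lemma:compact_f}; since $P_t$ is an $L^\infty$-contraction, $P_t f$ is bounded, and by the semigroup property $P_t f\in Dom(\A)$ with $\A P_t f=P_t\A f$. Hence $u_t:=P_t f-\pi(f)$ is bounded, lies in $Dom(\A)$, is mean-zero (because $\pi(P_t f)=\pi(f)$ by stationarity), and $\A u_t=\A P_t f$. The curve $t\mapsto P_t f$ is $L^2(\pi)$-differentiable with derivative $\A P_t f$, so the Hilbert-space product rule gives $V'(t)=2\langle u_t,\A u_t\rangle_{L^2(\pi)}$, and by Lemma~\ref{lem:eff} (which also guarantees absolute convergence of the relevant sums) this equals $-2\EE(u_t,u_t)=-2\EE^*(u_t)$. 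Establishing this identity rigorously — in particular justifying $\langle u_t,\A u_t\rangle=-\EE(u_t,u_t)$ termwise and that the sum defining $V'(t)$ may be differentiated — is the first place where the boundedness conditions of Assumption~\ref{intcnd} and Lemma~\ref{lem:eff} are essential, and is precisely the ``passing limits through sums'' subtlety the authors flag.

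Granting $V'(t)=-2\EE^*(u_t)$, the pair $(i)\iff(ii)$ and $(ii)\implies(iii)$ follows by elementary differential-inequality reasoning. For $(i)\implies(ii)$: since $u_t$ is bounded, mean-zero, and in $Dom(\A)$, the gap bound \eqref{PI} gives $\EE(u_t,u_t)\ge C\,Var_{\pi}(P_t f)=C\,V(t)$, so $V'(t)\le -2C\,V(t)$, and integrating this differential inequality yields the exponential decay \eqref{ExpoEr2}. The implication $(ii)\implies(iii)$ is immediate on taking $C_f=\|f-\pi(f)\|_{L^2(\pi)}$. For the converse $(ii)\implies(i)$, I would square \eqref{ExpoEr2} and compare the resulting bound on $V(t)$ with its value at $t=0$: the two sides agree at $t=0$, so differentiating and using $V'(0)=-2\EE(f,f)$ forces the Poincar\'e inequality \eqref{PI} (with the constant dictated by the rate in \eqref{ExpoEr2}) for every $f\in\mathcal{B}_c$. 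Promoting this from $\mathcal{B}_c$ to all bounded $f\in Dom(\A)$ entering the definition \eqref{Def:GapA} of $\gap(\EE)$ requires a density/core argument that approximates a bounded $Dom(\A)$ function while controlling both $\EE^*$ and $Var_{\pi}$; combined with the derivative comparison this establishes $(ii)\implies(i)$, completing $(i)\iff(ii)\implies(iii)$ in full generality.

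The remaining implication, $(iii)\implies(i)$ under reversibility, is where self-adjointness is indispensable. Here $-\A$ is a nonnegative self-adjoint operator on $L^2_0(\pi)=\{g:\pi(g)=0\}$, and the spectral theorem yields, for $g=f-\pi(f)$, the representation $\|P_t g\|_{L^2(\pi)}^2=\int_{[0,\infty)}e^{-2\lambda t}\,d\mu_g(\lambda)$ with $\mu_g$ the spectral measure of $g$. Estimate \eqref{ExpoEr3} then forces $-\tfrac{1}{2t}\log\|P_t g\|_{L^2(\pi)}^2$, whose limit is $\inf\operatorname{supp}(\mu_g)$, to be at least the decay rate in \eqref{ExpoEr3} for every $f\in\mathcal{B}_c$; since $\mathcal{B}_c$ is dense in $L^2_0(\pi)$, no spectral mass of $-\A$ can sit below that rate, which is exactly $\gap(\EE)\ge C$. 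I expect the two technical points flagged above to be the main obstacles: the rigorous derivation of $V'(t)=-2\EE^*(u_t)$ (interchanging differentiation with the infinite sum), and, for the reversible converse, transferring a decay estimate known only on the dense class $\mathcal{B}_c$ to a statement about the entire spectrum through $\mu_g$ — the step that fails without self-adjointness and thereby explains why only $(i)\iff(ii)\implies(iii)$ survives when time-reversibility is dropped.
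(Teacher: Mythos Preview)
Your proposal is correct and, for the implications $(i)\iff(ii)\implies(iii)$, essentially identical to the paper's argument: both compute $V'(t)=-2\EE(P_tf,P_tf)$ via the semigroup identity $\A P_t f=P_t\A f$ and the termwise differentiation justified by Lemma~\ref{lem:uniformconv}, then integrate the differential inequality. You are in fact slightly more careful than the paper in flagging the density step needed to pass the Poincar\'e inequality from $\mathcal{B}_c$ to all bounded $f\in Dom(\A)$ in $(ii)\Rightarrow(i)$; the paper simply cites \cite{Han16}.

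The one substantive difference is the reversible converse $(iii)\Rightarrow(i)$. The paper invokes a \emph{convexity argument} (again deferring to \cite{Han16}): in the self-adjoint case one checks directly, via Cauchy--Schwarz applied to $\langle P_{t/2}g,\A P_{t/2}g\rangle$, that $t\mapsto\log\|P_tg\|_{L^2(\pi)}^2$ is convex; a convex function lying below the affine function $2\log C_f-4Ct$ must have initial slope at most $-4C$, which is exactly the Poincar\'e inequality at $f$. Your route instead passes through the spectral theorem for the self-adjoint operator $-\A$ on $L^2_0(\pi)$ and reads off $\inf\operatorname{supp}(\mu_g)$ from the large-$t$ decay rate. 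Both arguments are valid and both hinge on self-adjointness; the convexity route is more elementary (it avoids the spectral machinery and stays entirely within the differential framework already set up for the other implications), while your spectral route makes the role of reversibility more transparent and explains structurally why the implication fails otherwise.
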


\begin{proof}
We do not assume that the process is time-reversible and begin by showing $(i) \implies (ii)$.

Suppose $\gap(\EE) \geq  C$ for some $C\in(0,\infty)$. Let $f\in \mathcal{B}_c$ be arbitrary.  We have $f\in Dom(\mathcal{A})$ by Lemma \ref{lemma:compact_f}.  Because $f\in Dom(\mathcal{A})$, \cite[Proposition 1.5]{ethier2009markov} implies that $P_t f \in  Dom(\mathcal{A})$ for all $t \ge0$. Recalling that $P_tf (x) = \mathbb{E}_x[f(X(t))]$, we immediately have $\|P_t f\|_\infty   \le \|f\|_\infty < \infty$. Hence, we can  consider $\EE(P_tf,\,P_tf)$ and we note that by our assumption that $\gap(\EE)\ge C$,  we have that 
\begin{align}\label{eq:gap1}
    \EE(P_tf,\,P_tf) \geq C\,Var_{\pi}(P_tf),
    \end{align}
    for all  $t\geq 0$. 

Observe that $\pi(P_t f) = \pi (f)$ is constant in $t$.  Hence,
\begin{align}
\frac{d}{dt}Var_{\pi}(P_tf)
&= \frac{d}{dt} \left\{\pi((P_tf)^2)- [\pi(P_tf)]^2 \right\}= \frac{d}{dt} \pi((P_tf)^2)\notag\\
&= \frac{d}{dt} \sum_{x\in \mathbb{S}} \pi(x) (P_t f(x))^2. \label{eq:dkfjadkj}
\end{align}
Next,  and again by \cite[Proposition 1.5]{ethier2009markov}, $\frac{d}{dt} P_t f = \mathcal{A}P_t f$.  Hence, the series giving the  term-wise derivatives of the final line above is
\begin{align*}
 \sum_{x\in \mathbb{S}} \pi(x) \frac{d}{dt}(P_t f(x))^2= 2\sum_{x\in\mathbb{S}}\pi(x)  P_tf(x)\,\mathcal{A}P_tf(x) =2 \langle P_tf,\,\mathcal{A}P_tf \rangle_{L^2(\pi)}. 
\end{align*}
Because $f\in \mathcal{B}_c$, we know $\|f\|_\infty<\infty$.  Hence, by Lemma \ref{lem:uniformconv} the sum above converges uniformly for $t\ge 0$.  Thus, we may pass the derivative through the sum in \eqref{eq:dkfjadkj} and conclude
\begin{align}
    \frac{d}{dt}Var_{\pi}(P_tf) 
&=2\langle P_tf,\,\mathcal{A}P_tf \rangle_{L^2(\pi)}\notag \\
&= -2\,\EE(P_tf,P_t f)\label{eq:7896786}\\
&\leq  -2 C Var_{\pi}(P_tf),\notag
\end{align}
for $t \ge 0$, where the final inequality follows from \eqref{eq:gap1}.
Hence for all $t \ge0$, we have  
\[
Var_{\pi}(P_tf) \leq e^{-2Ct} Var_{\pi}(f),
\]
which is exactly \eqref{ExpoEr2}.

The proof that $(ii) \implies (i)$ is now straightforward by using \eqref{eq:7896786} (see \cite[Proof of Proposition 2.18]{Han16}). 
The fact that $(iii)\implies (i)$ (and $(ii)$) in the time-reversible setting follows from a convexity argument. See the proof of Theorem 2.18 in \cite{Han16} for these details.
\end{proof}

The next lemma formally connects the previous result to the convergence of the distribution in the total variation norm, and hence makes the connection to mixing times.

\begin{lem}\label{lem:randome11}
Suppose $X$ is an irreducible and positive-recurrent continuous-time Markov chain with countable state space $\S\subset \Z^d$ and stationary distribution $\pi$.  Suppose that there is a $C>0$ for which 
\begin{equation}\label{ExpoEr23}
\left \|P_tf-\pi(f) \right \|_{L^2(\pi)} \leq e^{-Ct}\, \left \|f-\pi(f) \right \|_{L^2(\pi)}
\end{equation}
for all $f\in \mathcal{B}_c$ and $t\geq 0$. Then
\begin{align*}%\label{eq:L2 and TV}
\Vert P^t(x,\cdot) - \pi(\cdot) \Vert_{\text{TV}} \le \frac{2}{\pi(x)} e^{-Ct}.
\end{align*}
\end{lem}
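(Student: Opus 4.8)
The plan is to convert the total-variation distance, which is an $L^1$-type quantity, into an $L^2(\pi)$ quantity to which the hypothesis \eqref{ExpoEr23} can be applied. The key device will be the variational (duality) representation of total variation, combined with the elementary pointwise bound $|h(x)|\le\pi(x)^{-1/2}\,\Vert h\Vert_{L^2(\pi)}$ for any $h\colon\S\to\R$, which follows from $\Vert h\Vert_{L^2(\pi)}^2=\sum_{y}h(y)^2\pi(y)\ge h(x)^2\pi(x)$ and is legitimate since $\pi(x)>0$ for every $x\in\S$ by irreducibility and positive recurrence.

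First I would record the duality representation for the two probability measures $P^t(x,\cdot)$ and $\pi$ on the discrete space $\S$,
$$\Vert P^t(x,\cdot)-\pi\Vert_{\TV} = \frac12\sup\Big\{\,\big|\textstyle\sum_{y}\phi(y)\,(P^t(x,y)-\pi(y))\big|\ :\ \phi\in\mathcal{B}_c,\ \Vert\phi\Vert_\infty\le1\,\Big\}.$$
Since $P^t(x,\cdot)-\pi$ is summable (its total mass in absolute value is at most $2$), the supremum over all $\phi$ with $\Vert\phi\Vert_\infty\le1$ is already reached in the limit along compactly supported sign functions $\phi=\mathrm{sign}(P^t(x,\cdot)-\pi)\,\mathbf 1_{B_n}$ with $B_n\uparrow\S$, so restricting to $\phi\in\mathcal{B}_c$ costs nothing. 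I would then rewrite the inner sum in semigroup form, $\sum_{y}\phi(y)P^t(x,y)=\mathbb E_x[\phi(X(t))]=P_t\phi(x)$, so that the quantity inside the supremum equals $|P_t\phi(x)-\pi(\phi)|$.

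Next, for each fixed $\phi\in\mathcal{B}_c$ with $\Vert\phi\Vert_\infty\le1$, I would apply the pointwise bound to $h:=P_t\phi-\pi(\phi)$, obtaining
$$|P_t\phi(x)-\pi(\phi)|\le\pi(x)^{-1/2}\,\Vert P_t\phi-\pi(\phi)\Vert_{L^2(\pi)}\le\pi(x)^{-1/2}\,e^{-Ct}\,\Vert\phi-\pi(\phi)\Vert_{L^2(\pi)},$$
where the second inequality is exactly \eqref{ExpoEr23}, legitimately applied because $\phi\in\mathcal{B}_c$. Since $\Vert\phi-\pi(\phi)\Vert_{L^2(\pi)}^2=Var_\pi(\phi)\le\pi(\phi^2)\le\Vert\phi\Vert_\infty^2\le1$, the right-hand side is at most $\pi(x)^{-1/2}e^{-Ct}$, uniformly in $\phi$. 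Taking the supremum and using the duality identity then gives $\Vert P^t(x,\cdot)-\pi\Vert_{\TV}\le\tfrac12\,\pi(x)^{-1/2}e^{-Ct}$; as $\pi(x)\le1$ yields $\pi(x)^{-1/2}\le\pi(x)^{-1}$, this is in particular bounded by $\tfrac{2}{\pi(x)}e^{-Ct}$, the claimed estimate (indeed with room to spare).

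The two displayed inequalities are routine; the step that deserves the most care, and which I regard as the main obstacle, is justifying that the duality supremum may be restricted to $\mathcal{B}_c$, so that \eqref{ExpoEr23} — stated only for compactly supported $f$ — applies directly without a separate density argument, together with checking that $P_t\phi(x)=\mathbb E_x[\phi(X(t))]$ is a genuine pointwise function value to which $|h(x)|\le\pi(x)^{-1/2}\Vert h\Vert_{L^2(\pi)}$ may be applied. Both reduce to $\S$ being discrete with $\pi$ strictly positive and to $P^t(x,\cdot)-\pi$ being summable, so neither is a real difficulty once stated carefully.
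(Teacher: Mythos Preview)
Your argument is correct and follows the same overall strategy as the paper: represent the total-variation distance via test functions in $\mathcal{B}_c$, convert the pointwise quantity $|P_t\phi(x)-\pi(\phi)|$ into an $L^2(\pi)$ norm, and invoke the hypothesis \eqref{ExpoEr23}. The only real difference is in how the pointwise-to-$L^2$ step is carried out. The paper tests against indicators $f=\mathbbm{1}_K$ of compact sets, bounds the single term $\pi(x)|P_tf(x)-\pi(f)|$ by the full $L^1(\pi)$ norm $\sum_z\pi(z)|P_tf(z)-\pi(f)|$, and then passes to $L^2(\pi)$ via Cauchy--Schwarz; this costs an extra factor and yields the constant $2/\pi(x)$. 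You instead use the direct single-term bound $|h(x)|\le\pi(x)^{-1/2}\|h\|_{L^2(\pi)}$, which is sharper and produces $\tfrac12\,\pi(x)^{-1/2}$ --- a genuine improvement over the stated lemma (and hence over the mixing-time bound \eqref{eq:mx time from l2 norm}, where $|\ln\pi(x)|$ could be replaced by $\tfrac12|\ln\pi(x)|$). Both routes are short and elementary; yours is a bit cleaner and tighter.
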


\begin{proof}
Let $K$ be a fixed, compact subset of $\mathbb{S}$ and let $f(x) = \mathbbm{1}_K(x)$.  For any $x \in \mathbb{S}$, we have
\begin{align} 
    \pi(x)|P^t&(x,K)-\pi(K)| \le \sum_{z\in \mathbb{S}}\pi(z)|P^t(z,K)-\pi(K)| \notag\\
    &\le  \left(\sum_{z\in \mathbb{S}}\pi(z)\right)^{1/2}\left( \sum_{z\in \mathbb{S}} \pi(z) |P^t(z,K)-\pi(K)|^2\right)^{1/2} \tag{Cauchy-Schwarz}\\
    &= 1\cdot \| P_t f - \pi(f) \|_{L^2(\pi)}\notag\\
    &\le 2e^{-Ct},\notag
\end{align}
where the final inequality follows from \eqref{ExpoEr23} and the fact that $\|f-\pi(f)\|_{L^2(\pi)}\le 2$ for our choice of $f$. 
  Hence, taking the sup over the compact sets $K$,  we have 
\begin{align*}
\Vert P^t(x,\cdot) - \pi(\cdot) \Vert_{\TV} \le \frac{2}{\pi(x)}e^{-Ct},
\end{align*}
and the result is shown.
\end{proof}

Note  that the conclusion of Lemma \ref{lem:randome11}  gives an upper bound for 
the mixing time: if the assumptions of Theorem~\ref{thm:adlkjfkd} are satisfied, then
\begin{align}\label{eq:mx time from l2 norm}
\tau_\varepsilon^x&=\inf\{t>0: \Vert P^t(x,\cdot) - \pi(\cdot)\Vert_{\TV} \leq \varepsilon\}\notag\\
&\leq \frac{1}{\gap(\EE)}\left(-\ln\left(\frac{\varepsilon}{2}\right)-\ln(\pi(x))\right) = \frac{1}{\gap(\EE)}\left(\left|\ln\left(\frac{\varepsilon}{2}\right)\right| + |\ln(\pi(x))|\right).
\end{align}

\subsection{Statement and proof of our main result}
\label{sec:main_result}

In this section we again consider a non-explosive, irreducible continuous-time Markov chain $X$ on $\mathbb{S} \subset \Z^d$ with transition rates $\{q(x,z)\}$.  Theorem \ref{thm:main for poincare} below, which is our main technical result of the first part of our paper, gives sufficient conditions, in terms of  the stationary distribution $\pi$ of the associated Markov chain and the transition rates $\{q(x,z)\}$, for exponential ergodicity of the Markov chain. 

As before, we will assume throughout that our process satisfies the mild regularity Assumption \ref{intcnd}. For Theorem \ref{thm:main for poincare}, we need a new assumption. We begin with a definition.

\begin{defn}\label{Def:active}
Let $(x,x')\in \mathbb{S}\times \mathbb{S}$ be an ordered pair of points. 
A \emph{directed path} $\gamma(x,x')$ from $x$  to $x'$ is an ordered list of distinct states $(x_1,\dots, x_L)$, with each $x_i \in \mathbb{S}$, such that $x_1=x$ and $x_L=x'$. If $z = x_i$ and $z' = x_{i+1}$ for some $i \le L-1$, then we say $(z,z')$ is an \textit{edge} or sometimes a \textit{directed edge} in $\gamma(x,x')$. We write $(z,z')\in \gamma(x,x')$ if $(z,z')$ is an edge in the path $\gamma(x,x')$ and write $z\in \gamma(x,x')$ if $z = x_i$ for some $1\leq i\leq L$.
We denote by $|\gamma(x,x')|=L$ the \emph{length} of the path.
We say this path is \emph{active} if $q(x_i, x_{i+1}) >0$ for each $i\in \{1,2,\dots,L-1\}$.
\hfill $\triangle$
\end{defn} 

Note that we allow for the possibility that $x'=x$ in the definition above, in which case $\gamma(x,x) = (x)$ and $|\gamma(x,x)| = 1$.  Note also that in the case that $x = x'$ the path is, by definition, active.

 Our new assumption--Assumption \ref{assump:main} below-- involves the existence of very particular sets of  paths.  It is likely that the conditions on the paths will be difficult to digest at first glance,  so we first motivate with an example.  The detailed analysis of this example will also make the proof of Theorem \ref{thm:main for poincare} easier to parse as it contains many of the key steps.

\begin{example}\label{ex:motivation}
    Consider the birth and death model on $\{0,1,2,\dots\}$ with transition rates 
    \[
        q(x,x+1) = 1 \quad \text{and} \quad q(x,x-1)=x \quad \text{for } x \ge 0.
    \]
    This model has infinitesimal generator 
    \[
        \mathcal{A}f(x) = f(x+1)-f(x) + x(f(x-1)-f(x))
    \]
    and stationary distribution $\pi(x)=\frac{e^{-1}}{x!}$ (This model corresponds to the reaction network $\emptyset \overset{1}{\underset{1}{\rightleftarrows}} X_1$, or the M/M/$\infty$ queue).
    
    While it is straightforward to prove exponential ergodicity for this model via the use of Foster-Lyapunov functions, we want to instead prove positivity of the spectral gap of the process so as to motivate our constructions.  Hence, we want to show the existence of a $C>0$ so that 
\[
    Var_\pi(f) \le C \, \mathcal{E}^*(f).
\]
for any $f\in Dom(\mathcal{A})$ with $\|f\|_\infty<\infty$.
To do so, we start with the following  bound:
\begin{align*}
    Var_\pi(f) &= \sum_{x=0}^\infty \left(f(x)-\sum_{x'=0}^\infty f(x')\pi(x')\right)^2 \pi(x)= \sum_{x=0}^\infty \left(\sum_{x'=0}^\infty (f(x)-f(x'))\pi(x')\right)^2 \pi(x)\\
    &\le  \sum_{x=0}^\infty \sum_{x'=0}^\infty(f(x)-f(x'))^2\pi(x)\pi(x')\\
    &= 2 \sum_{x=0}^\infty \sum_{x'=x+1}^\infty(f(x)-f(x'))^2\pi(x)\pi(x').
\end{align*}
 To bound the final double sum above by $\mathcal E^*(f)$, we connect the points $x<x'$ via the active path 
 \[
    \gamma(x,x')=(x,x+1, \dots, x')
    \]
    so that $x_i = x+i-1$, for $i = 1,\dots, x'-x+1$.
    Then, by considering a telescoping sum for the term $f(x)-f(x')$ over this path, we have
\begin{align}
& \sum_{x=0}^\infty \sum_{x'=x+1}^\infty(f(x)-f(x'))^2\pi(x)\pi(x') \notag \\
&= \sum_{x=0}^\infty \sum_{x'=x+1}^\infty \left (\sum_{i=1}^{x'-x}(f(x_i)-f(x_{i+1}) \right )^2\pi(x)\pi(x') \label{eq:1dim1} \\
&\le \sum_{x=0}^\infty \sum_{x'=x+1}^\infty |x'-x|\pi(x)\pi(x') \sum_{i=1}^{|x'-x|} (f(x_i)-f(x_{i+1}))^2 \notag \\
&\le \sum_{x=0}^\infty \sum_{x'=x+1}^\infty |x'-x| \frac{\pi(x)\pi(x')}{\displaystyle \min_{i\in \{1,\dots,|x'-x|\}} \pi(x_i)q(x_i,x_{i+1})}\sum_{i=1}^{x'-x} (f(x_i)-f(x_{i+1}))^2 \pi(x_i) q(x_i,x_{i+1})\label{eq:1dim2},
\end{align}
where we used $(\sum_{i=1}^n a_i)^2 \le n\sum_{i=1}^n a_i^2$ in the first inequality above.
The key observation, and the motivation for the calculations above, is that for each pair $x,x'$ the final (third) sum in \eqref{eq:1dim2} is bounded by $2 \mathcal{E}^*(f)$,
\[
 \sum_{i=1}^{x'-x} (f(x_i)-f(x_{i+1}))^2 \pi(x_i) q(x_i,x_{i+1}) \le \sum_{x =0}^\infty \sum_{z =0}^\infty \big(f(x)-f(z)\big)^2\,\pi(x)\,q(x,z)= 2\mathcal{E}^*(f).
\]
Hence, if we can show that
\begin{equation}\label{adfjadlk;fjiouy}
\sum_{x=0}^\infty \sum_{x'=x+1}^\infty |x'-x| \frac{\pi(x)\pi(x')}{\displaystyle \min_{i\in \{1,\dots,|x'-x|\}}\pi(x_i)q(x_i,x_{i+1})}< \infty,
\end{equation}
we will have our desired result.
However, it turns out that this term is actually infinity for the present example and our bounds, especially that of \eqref{eq:1dim2}, were too crude and will require modification (which, in turn, leads to the conditions of Assumption \ref{assump:main}). 
To show that the term on the left of \eqref{adfjadlk;fjiouy} is infinity, we simply make the following observations, which are unique to this example:
\begin{itemize}
    \item $q(x_i,x_{i+1} ) = 1$ for all $i$,
    \item because $x'>x$, we know that $\displaystyle \min_{i \in \{1,\dots, |x'-x|\}} \pi(x_i) q(x_i,x_{i+1}) = \pi(x'-1) = e^{-1}\frac1{(x'-1)!}$.
\end{itemize}
Hence, 
\begin{align*}
    \sum_{x=0}^\infty \sum_{x'=x+1}^\infty |x'-x| \frac{\pi(x)\pi(x')}{\displaystyle \min_{i\in \{1,\dots,|x'-x|\}}\pi(x_i)q(x_i,x_{i+1})} = \sum_{x=0}^\infty \sum_{x'=x+1}^\infty (x'-x) \pi(x)\cdot \frac{1}{x'}=\infty,
\end{align*}
and our first attempt to prove that the gap is positive has failed.

To fix the method for this example we note that if we had changed things so that a modified version of the term 
\[
\frac{\pi(x')}{\displaystyle \min_{i\in \{1,\dots,|x'-x|\}}\pi(x_i)q(x_i,x_{i+1})}
\]
had been of order $\frac{1}{(x')^3}$ (or smaller), then the double sum in \eqref{adfjadlk;fjiouy} would have been finite.  Our strategy, therefore, will be to make the denominator larger by forcing the minimum to be over paths that do not go all the way to $x'$; instead, the paths will stop at $x'-3$.  We will then consider the remaining path from $x'-3$ to $x'$ separately.

Therefore, we begin by defining a function, $t: \{0,1,\dots\} \to \{0,1,\dots\}$ as follows:
\begin{align*}
 t(x)=
 \begin{cases}
     x-3 \quad &\text{for $x> 3$}\\
     x &\text{otherwise.}
 \end{cases}   
\end{align*}
Then, for each pair of points $x$ and $x'$ with $x<x'$, we  consider three paths that connect $x$ and $x'$:
\begin{enumerate}
    \item A path from $x$ to $t(x)$:
    \begin{align}
    \label{eq:009877}
    \gamma(x,t(x)) = \begin{cases}
        (x,x-1,x-2,x-3) & \text{ if $x>3$}\\
        (x) & \text{ else.}
    \end{cases},
    \end{align}
\item A monotonic path from $t(x)$ to $t(x')$: $\gamma(t(x),t(x'))=(t(x),\dots, t(x'))$,
\item A path from $t(x')$ to $x'$, which is simply the reverse of the path $\gamma(x',t(x'))$, defined similarly as above.
\end{enumerate}
We may now perform a telescoping sum similar to what we did in \eqref{eq:1dim1},  but over the three different paths as well.  This leads to terms of the following form:
\begin{align}
& 3\sum_{x=0}^\infty \sum_{x'=x+1}^\infty  \left ( \sum_{i=1}^{|\gamma(x,t(x))|-1}(f(x_{i+1})-f(x_{i}))^2 + \sum_{i=1}^{|\gamma(x',t(x'))|-1}(f(x'_{i+1})-f(x'_{i}))^2 \right ) \pi(x)\pi(x') \label{eq:tele_motivate_example1}  \\
& \sum_{x=0}^\infty \sum_{x'=x+1}^\infty\left ( |\gamma(t(x),t(x'))|\sum_{i=1}^{|\gamma(t(x),t(x'))|-1}(f(z_{i+1})-f(z_{i}))^2 \right ) \pi(x)\pi(x'),\label{eq:tele_motivate_example2}
\end{align}
where $x_i$ and $x_i'$ denote the $i$th points in the paths $\gamma(x,t(x))$ and $\gamma(x',t(x'))$, respectively,  $z_i$ denotes the $i$th point in the path $\gamma(t(x),t(x'))$, and where the $3$ in \eqref{eq:tele_motivate_example1} is an upper bound on each of $|\gamma(x,t(x'))|$ and $|\gamma(x,t(x'))|$. 

We begin with the second term \eqref{eq:tele_motivate_example2}, and bound it in a similar manner as in \eqref{eq:1dim2}:
\begin{align*}
    \sum_{x=0}^\infty &\sum_{x'=x+1}^\infty\left ( |\gamma(t(x),t(x'))|\sum_{i=1}^{|\gamma(t(x),t(x'))|-1}(f(z_{i+1})-f(z_{i}))^2 \right ) \pi(x)\pi(x') \\
    &\le \sum_{x=0}^\infty \sum_{x'=x+1}^\infty |x'-x| \frac{\pi(x)\pi(x')}{\displaystyle \min_{i\in \{1,\dots,|t(x')-t(x)|\}} \pi(z_i)q(z_i,z_{i+1})}\sum_{i=1}^{|\gamma(t(x),t(x'))|-1} (f(z_i)-f(z_{i+1}))^2 \pi(z_i) q(z_i,z_{i+1}),
\end{align*}
where we also used that $|\gamma(t(x),t(x'))| \le |x'-x|$.
Similar to \eqref{eq:1dim2}, the final sum is uniformly bounded by $2\mathcal{E}^*(f)$.  Hence, the  question is whether or not the double sum
\begin{equation}
     \sum_{x=0}^\infty \sum_{x'=x+1}^\infty (x'-x) \frac{\pi(x)\pi(x')}{\displaystyle \min_{i\in \{1,\dots,|t(x')-t(x)|\}} \pi(z_i)q(z_i,z_{i+1})}
     \label{eq:toshow2323}
\end{equation}
is finite.  Now that the minimum is over the path $\gamma(t(x),t(x'))$ (which ends at $t(x') = x'-3$ so long as $x'\ge 4$) we see that
\[
\frac{\pi(x')}{\displaystyle \min_{i\in \{1,\dots,|t(x')-t(x)|\}} \pi(z_i)q(z_i,z_{i+1})} =\frac{\pi(x')}{\displaystyle \min_{i\in \{1,\dots,|t(x')-t(x)|\}} \pi(z_i)}\le \frac{c_0}{(x')^4}, 
\]
for some $c_0>0$.  Hence, \eqref{eq:toshow2323} is indeed finite.

The sums in \eqref{eq:tele_motivate_example1} will be handled in a similar (and simpler) manner.  By symmetry in $x$ and $x'$, we just need the existence of a universal $c_1>0$ for which 
\begin{align*}
    \sum_{x=0}^\infty \sum_{x'=x+1}^\infty  \left ( \sum_{i=1}^{|\gamma(x,t(x))|-1}(f(x_{i+1})-f(x_{i}))^2  \right ) \pi(x)\pi(x') <c_1 \mathcal{E}^*(f).
\end{align*}
Proceeding in a similar manner as above, we  note the following
\begin{align*}
     \sum_{x=0}^\infty \sum_{x'=x+1}^\infty & \left ( \sum_{i=1}^{|\gamma(x,t(x))|-1}(f(x_{i+1})-f(x_{i}))^2  \right ) \pi(x)\pi(x') \le  \sum_{x=0}^\infty \sum_{i=1}^{|\gamma(x,t(x))|-1}(f(x_{i+1})-f(x_{i}))^2 \pi(x)\\
     &=  \sum_{x=0}^\infty \sum_{i=1}^{|\gamma(x,t(x))|-1}(f(x_{i+1})-f(x_{i}))^2 \frac{q(x_i,x_{i+1})\pi(x_i)}{q(x_i,x_{i+1})\pi(x_i)} \pi(x)\\
     &\le \left(\sup_{x\in \{0,1,2,\dots\}} \frac{\pi(x)}{\min\{\pi(x_i)\}} \right) \sum_{x=0}^\infty \sum_{i=1}^{|\gamma(x,t(x))|-1}(f(x_{i+1})-f(x_{i}))^2 q(x_i,x_{i+1})\pi(x_i),
\end{align*}
However, because the path $\gamma(x,t(x))$ moves towards zero from $x$, we have that $\pi(x_i)\ge \pi(x)$ for all $x$ and all associated $x_i$.  Hence, the supremum term is equal to 1.  Next, note that because each edge $(x_i,x_{i+1})$ can only be in at most 3 paths of the form $\gamma(x,t(x))$, we have that the double sum above is bounded by 
\[
3\cdot 2\mathcal{E}^*(f),
\]
where the 2 arises because of the $\frac12$ term in the definition of $\mathcal{E}^*(f)$.  

Collecting all the above, we have shown the existence of a constant $C>0$ for which $Var_\pi(f) \le C \ \mathcal{E}^*(f)$ and the analysis of this example is complete.
\hfill  $\triangle$
\end{example}

  We now introduce our main new assumption, Assumption \ref{assump:main}, on the model. This new assumption is motivated by  Example \ref{ex:motivation}.
Part 1 says that each state can reach a ``nice'' (terminal) state in a finite number of steps (and that number of steps is bounded).  Comparing to Example \ref{ex:motivation}, this will involve the paths defined in \eqref{eq:009877} . Part 2 says that a certain sum, \eqref{eq:sum_condition} below, taken over all pairs of points in $\mathbb{S}$, is finite.  This corresponds to the double sum \eqref{eq:toshow2323} in Example \ref{ex:motivation} being finite.  The key thing to observe is that the sum explicitly uses a mix of regular states (denoted $x$ and $x'$) and terminal states (denoted $t(x)$ and $t(x')$, respectively).
Continuing with the comparison to Example \ref{ex:motivation}, the term $\overline L$ (the max length of $\gamma(x,t(x))$) below is equal to 3 in the example, the term $\overline M$ (representing how many paths of the form $\gamma(x,t(x))$ a particular edge can be in) is also equal to 3, and the term \eqref{eq:supmaxassump} is equal to 1 (as mentioned in the last part of the analysis of the example).

For ease of notation, for a given path $\gamma$ we let $\pi_{\text{min}}(\gamma):=\min_{z\in \gamma} \pi(z)$ be the minimal weight among all states on $\gamma$.

\begin{assumption}\label{assump:main}
The process $X$ with rates $\{q(x,z)\}$ and stationary distribution $\pi$ satisfies the following conditions.

\begin{enumerate}
    \item (Finite steps to a terminal state.) For each $x \in \mathbb{S}$ there is a fixed (terminal) state $t(x) \in \mathbb{S}$ and an active directed path $\gamma_x=\gamma(x,t(x))$.  The following conditions hold for this set of paths
\begin{align}   
&\sup_{x \in \mathbb{S}} |\gamma_x| := \overline L < \infty, \label{eq:overlineL}\\
&\sup_{(x,x') \in \mathbb{S}\times \mathbb{S}} 
| \{ z\in \mathbb{S} :  (x,x') \text{ is an edge in } \gamma_z \}| := \overline M < \infty \label{eq:no infinitely many paths intersects}\\
%& \sup_{x\in \mathbb S}\sup_{x_i\in \gamma_x}|\{z : x_i\in \gamma_z\}|:=\overline M < \infty, \text{ and}\label{eq:no infinitely many paths intersects}\\
& \sup_{x\in \mathbb{S}} \frac{\pi(x)}{\pi_{\text{min}}(\gamma_x)}< \infty.\label{eq:supmaxassump}
\end{align}
Denote the set of terminal states by $\mathcal{T} = \{t(x): x\in \mathbb{S}\}$.  
\item (Fast decay of $\pi$.) There is another set of  paths, 
\[
\Gamma_{\mathcal{T}} =  \{\gamma(t(x),t(x')) \ \text{ or }\  \gamma(t(x'),t(x)) \text{ but not both} : t(x), t(x')\in \mathcal{T} \text{ with } t(x) \ne t(x') \}
\]
that contains precisely one active, directed path for each pair of terminal states  for which  
\begin{align}\label{eq:sum_condition}
\sum_{\substack{(x,x')\in \mathbb{S}\times \mathbb{S}\\
    x\ne x'}} |\gamma(t(x),t(x'))| \left( \frac{\pi(x)\pi(x')}{\pi_{\text{min}}\big(\gamma(t(x),t(x'))\big)}\right) < \infty,
\end{align}
where $|\gamma(t(x),t(x'))| \left( \frac{\pi(x)\pi(x')}{\pi_{\text{min}}\big(\gamma(t(x),t(x'))\big)}\right)$ is taken to be zero if $\gamma(t(x),t(x')) \notin \Gamma_{\mathcal{T}}$.

\item (Uniform lower bound for transition rates.) There is a minimum for the transition rates along all the active paths described in parts 1 and 2 of this assumption (including the paths $\{\gamma_x\}$ and those in $\Gamma_{\mathcal{T}}$). That is, there exists a $c_{\text{min}}>0$ for which both the following hold,
\begin{align}
    \label{eq:activepathcond}
    \begin{split}
    \inf_{x\in \mathbb{S}} \inf_{(z,z') \text{ is an edge in $\gamma_x$}} \{q(z,z')\} &\ge c_{\text{min}}\\
    \inf_{\gamma \in \Gamma_\mathcal{T}} \inf_{(z,z') \text{ is an edge in $\gamma$}} \{q(z,z')\} &\ge c_{\text{min}}.
    \end{split}
\end{align}
\hfill $\triangle$
\end{enumerate}
\end{assumption}

\begin{rem}\label{rem:main assumption}
Note that condition \eqref{eq:no infinitely many paths intersects} simply states that there is a bound on the number of paths of the form $\gamma_z = \gamma(z,t(z))$ that a particular edge can be contained within.  Moreover, \eqref{eq:no infinitely many paths intersects} holds if \eqref{eq:overlineL} holds and if the sizes of the possible  transitions for $X$ are uniformly bounded. Also note that \eqref{eq:activepathcond} holds if the rates $q_x=\sum_{z\in \mathbb S\setminus x} q(x,z)$ satisfy $\lim_{|x|\to \infty}q_x=\infty$. \hfill $\dagger$
\end{rem}

We  now state the main technical result developed in this paper.  Loosely, it says that if Assumption \ref{assump:main} holds, then the spectral gap is positive and the model is exponentially ergodic.

\begin{thm}\label{thm:main for poincare}

Suppose $X$ is an irreducible and positive-recurrent continuous-time Markov chain with countable state space $\S\subset \Z^d$ and stationary distribution $\pi$ for which Assumption \ref{intcnd} holds.   Suppose that Assumption \ref{assump:main} also holds.  
Then there is a $C>0$ for which
\begin{align}\label{eq:toshow111}
    \EE(f,f) \ge C\cdot  Var_\pi(f)
\end{align}
for all $f\in Dom(\mathcal{A})$ with $\|f\|_\infty < \infty$, and so $\gap(\EE) \ge C$.  Moreover, we have the following two conclusions.
\begin{enumerate}[(i)]
\item \eqref{eq:toprove} holds with $B(x) = \dfrac{2}{\pi(x)}$ and $\eta = 2C$.  That is, 
%there is an $\eta >0$ for which
\begin{align*}
    \|P^t(x,\cdot) - \pi(\cdot)\|_{\text{TV}} \le \frac{2}{\pi(x)} e^{-2C t},
\end{align*}
for all $t \ge 0$.

\item For a fixed $\varepsilon>0$ and each $x \in \mathbb{S}$  the mixing time defined in \eqref{eq:mixing time} satisfies 
\[
    \tau_\varepsilon^x \le \frac{1}{2C}
\left(\left|\ln{(\pi(x))}\right|+\left|\ln \left(\frac{\varepsilon}{2}\right)\right|\right) = O(|\ln(\pi(x))|).
\]
\end{enumerate}
\end{thm}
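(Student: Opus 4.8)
The plan is to prove the Poincar\'e inequality \eqref{eq:toshow111} by generalizing the computation carried out in Example \ref{ex:motivation}, and then to read off conclusions $(i)$ and $(ii)$ from the machinery already assembled in Section \ref{sec:models}. Since $\EE(f,f)=\EE^*(f)$ for bounded $f\in Dom(\mathcal{A})$ by Lemma \ref{lem:eff}, it suffices to bound $Var_\pi(f)$ by a fixed multiple of $\EE^*(f)$. I would begin exactly as in the example, writing $f(x)-\pi(f)=\sum_{x'}(f(x)-f(x'))\pi(x')$ and applying Jensen's inequality to obtain
\[
Var_\pi(f)\le \sum_{x,x'}\big(f(x)-f(x')\big)^2\,\pi(x)\pi(x').
\]

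The first step is the path decomposition. For each ordered pair $(x,x')$ with $x\ne x'$ I would connect $x$ to $x'$ by concatenating the three active paths supplied by Assumption \ref{assump:main}: the head $\gamma_x=\gamma(x,t(x))$, the middle path $\gamma(t(x),t(x'))\in\Gamma_{\mathcal{T}}$ (or its reversal, per the single-orientation convention), and the reversed tail $\gamma(x',t(x'))$. Telescoping $f(x)-f(x')$ along this concatenation and applying the elementary inequality $(\sum_{i=1}^n a_i)^2\le n\sum_i a_i^2$ (first across the three segments, then within each segment) splits the right-hand side into a ``head/tail'' contribution, whose segments have length at most $\overline L$ and hence acquire a uniformly bounded prefactor, and a ``middle'' contribution carrying the factor $|\gamma(t(x),t(x'))|$. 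These mirror \eqref{eq:tele_motivate_example1} and \eqref{eq:tele_motivate_example2}, respectively.

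The second step is to bound the two contributions. For the middle term I would multiply and divide each squared edge-difference by $\pi(z_i)q(z_i,z_{i+1})$, bounding this denominator below by $c_{\text{min}}\,\pi_{\text{min}}(\gamma(t(x),t(x')))$ via \eqref{eq:activepathcond}; the resulting inner edge-sum is at most $2\EE^*(f)$, and the remaining prefactor is finite precisely by the summability hypothesis \eqref{eq:sum_condition}. For the head/tail term I would first sum over $x'$ (using $\sum_{x'}\pi(x')=1$), then apply the same multiply-and-divide trick: here the prefactor $\pi(x)/\pi_{\text{min}}(\gamma_x)$ is controlled by \eqref{eq:supmaxassump}, the rates are again bounded below by $c_{\text{min}}$, and the total edge-sum is at most $2\overline M\,\EE^*(f)$ because, by \eqref{eq:no infinitely many paths intersects}, each edge lies in at most $\overline M$ of the paths $\{\gamma_z\}$. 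Collecting the two estimates yields $Var_\pi(f)\le C^{-1}\EE^*(f)$ for an explicit $C>0$, which is \eqref{eq:toshow111}, and hence $\gap(\EE)\ge C$.

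Finally, conclusion $(i)$ follows by combining $\gap(\EE)\ge C$ with the implication $(i)\Rightarrow(ii)$ of Theorem \ref{thm:adlkjfkd} (which holds without time-reversibility and upgrades the gap to the $L^2$ decay $\|P_tf-\pi(f)\|_{L^2(\pi)}\le e^{-2Ct}\|f-\pi(f)\|_{L^2(\pi)}$), followed by Lemma \ref{lem:randome11} applied with rate $2C$; conclusion $(ii)$ is then immediate from \eqref{eq:mx time from l2 norm}. The main obstacle I anticipate is the bookkeeping in the second step: one must justify the interchange of summation orders (absolute convergence being guaranteed once \eqref{eq:sum_condition} holds and $f$ is bounded), and must verify that the multiplicity bound $\overline M$ for the head/tail paths and the single-orientation convention defining $\Gamma_{\mathcal{T}}$ are applied consistently, so that no edge's contribution to $\EE^*(f)$ is over- or under-counted.
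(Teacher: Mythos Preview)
Your proposal is correct and follows essentially the same approach as the paper's proof: the same Jensen bound, the same three-segment path decomposition via $\gamma_x$, $\gamma(t(x),t(x'))$, and the reversed $\gamma_{x'}$, the same multiply-and-divide trick to extract $\EE^*(f)$ from each piece, and the same use of \eqref{eq:sum_condition}, \eqref{eq:supmaxassump}, \eqref{eq:no infinitely many paths intersects}, and \eqref{eq:activepathcond} to control the resulting prefactors, with conclusions $(i)$ and $(ii)$ read off from Theorem~\ref{thm:adlkjfkd} and Lemma~\ref{lem:randome11}. The only cosmetic difference is the order in which you apply the Cauchy--Schwarz-type inequality across versus within segments, which affects only the explicit constants.
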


\begin{proof}
Note that conclusions (i) and (ii) follow directly from a combination of \eqref{eq:toshow111}, Theorem \ref{thm:adlkjfkd}, and Lemma \ref{lem:randome11}. Hence, we only show \eqref{eq:toshow111}.  

Let $f\in Dom(\mathcal{A})$ with $\|f\|_\infty<\infty$.  Due to   Lemma \ref{lem:eff}, which states that $\EE(f,f) = \EE^*(f)$ for the $f$ under consideration, we need to show that
\begin{equation}\label{PoinIneq}
    Var_\pi (f) \le \frac{1}{C}\cdot  \mathcal E^*(f)
\end{equation}
for some $C>0$ that does not depend upon the choice of $f$.

The proof of \eqref{PoinIneq} is organized into three steps, and should be compared to the analysis of Example \ref{ex:motivation}.  
\vspace{0.3cm}

\noindent
\textbf{Step 1: Path decomposition. }  By definition, we have that 
\begin{align}\label{eq:9879876}
    Var_\pi(f) = \sum_{x\in \mathbb{S}}\left(f(x) - \sum_{x'\in \mathbb{S}} f(x')\pi(x')\right)^2\pi(x).
\end{align}
By Jensen's inequality we have
\[
\left( f(x)- \sum_{x' \in \mathbb{S}}f(x')\pi(x') \right)^2=\left( \sum_{x' \in \mathbb{S}}(f(x)-f(x'))\,\pi(x') \right)^2 \leq 
\sum_{x'\in \mathbb{S}} (f(x)-f(x'))^2\pi(x').
\]
Putting this back into \eqref{eq:9879876} yields
\begin{align}
Var_\pi(f) &= \sum_{x\in \mathbb{S}} \left( f(x)- \sum_{x' \in \mathbb{S}}f(x')\pi(x') \right)^2\pi(x) \le\sum_{x \in \mathbb{S}} \sum_{x' \in \mathbb{S}} (f(x)-f(x'))^2\pi(x)\pi(x').\label{eq:var}
\end{align}

We will now consider the term $(f(x)-f(x'))^2$ for $x \ne x'$.   For a given pair $x,x'\in \mathbb{S}$, we enumerate the paths $\gamma_x$, $\gamma_{x'}$ and $\gamma(t(x),t(x'))\in \Gamma_{\mathcal{T}}$ in Assumption \ref{assump:main} as 
\begin{align*}
\gamma_x&=(x_1,x_2,\dots,x_{|\gamma_x|}), \\
\gamma_{x'} &=(x'_1, x'_2,\dots,x'_{|\gamma_{x'}|})\\
\gamma(t(x),t(x'))&=(z_1,z_2,\dots,z_{|\gamma(t(x),t(x'))|}),
\end{align*}
where we note that $z_1 = t(x) = x_{|\gamma_x|}$ and $z_{|\gamma(t(x),t(x'))|} = t(x') = x'_{|\gamma_{x'}|}$.
Note that we are, without loss of generality, specifically (and arbitrarily) assuming that the active path in $\Gamma_{\mathcal{T}}$ connecting $t(x)$ and $t(x')$ is $\gamma(t(x),t(x'))$ as opposed to $\gamma(t(x'),t(x))$. We now split the term $f(x)-f(x')$ in \eqref{eq:var}  into three sums via a telescoping sum using the three active paths $\gamma_x$, $\gamma(t(x),t(x'))$, and $\gamma_{x'}$.  
We obtain
\begin{align}
(f(x)-f(x'))^2 &=\Bigg (-\sum_{i=1}^{|\gamma_x|-1}(f(x_{i+1})-f(x_{i})) - \sum_{i=1}^{|\gamma(t(x),t(x'))|-1}(f(z_{i+1})-f(z_{i}))\notag\\
&\hspace{.3in}+\sum_{i=1}^{|\gamma_{x'}|-1}(f(x'_{i+1})-f(x'_{i})) \Bigg ) ^2\notag\\
&\le 2\left (- \sum_{i=1}^{|\gamma_x|-1}(f(x_{i+1})-f(x_{i})) + \sum_{i=1}^{|\gamma_{x'}|-1}(f(x'_{i+1})-f(x'_{i})) \right  ) ^2 \notag\\
&\hspace{0.1in}+ 2\left (\sum_{i=1}^{|\gamma(t(x),t(x'))|-1}(f(z_{i+1})-f(z_{i})) \right )^2\notag\\
\begin{split}\label{eq:path from x to x'}
&\le 4 \overline L \left ( \sum_{i=1}^{|\gamma_x|-1}(f(x_{i+1})-f(x_{i}))^2 + \sum_{i=1}^{|\gamma_{x'}|-1}(f(x'_{i+1})-f(x'_{i}))^2 \right )\\
&\hspace{0.1in}+ 2|\gamma(t(x),t(x'))|\sum_{i=1}^{|\gamma(t(x),t(x'))|-1}(f(z_{i+1})-f(z_{i}))^2,
\end{split}
\end{align}
where we used $(\sum_{i=1}^n a_i)^2 \le n\sum_{i=1}^n a_i^2$ for the two inequalities above, and we remind that $\overline L$ is defined in \eqref{eq:overlineL}.
We may plug the inequality in \eqref{eq:path from x to x'} back into the inequality \eqref{eq:var} to obtain the following
\begin{align}
Var_\pi(f) \le&\sum_{(x,x')\in \mathbb S\times \mathbb S}(f(x)-f(x'))^2\pi(x)\pi(x')\notag\\
\le&  4 \overline L \sum_{\substack{(x,x')\in \mathbb S\times \mathbb S\\
x\ne x'}}  \left ( \sum_{i=1}^{|\gamma_x|-1}(f(x_{i+1})-f(x_{i}))^2 + \sum_{i=1}^{|\gamma_{x'}|-1}(f(x'_{i+1})-f(x'_{i}))^2 \right ) \pi(x)\pi(x')   \label{E1}\\
& +2\sum_{\substack{(x,x')\in \mathbb S\times \mathbb S\\
x\ne x'}}\left ( |\gamma(t(x),t(x'))|\sum_{i=1}^{|\gamma(t(x),t(x'))|-1}(f(z_{i+1})-f(z_{i}))^2 \right ) \pi(x)\pi(x'). \label{E2}
\end{align}

The summations (\ref{E1}) and (\ref{E2}) correspond to the ends and the middle parts of the path from $x$ to $x'$, respectively. We must now show that each is upper bounded by   $c\cdot \EE^*(f)$, for some positive constant $c$.  We handle the sums individually in the next two steps of the proof.

\vspace{0.3cm}
\noindent
\textbf{Step 2: Upper bound for (\ref{E2}). }
For each pair $(x,x')\in \mathbb S \times \mathbb S$ with $x\ne x'$, we have 
\begin{align}
    &\sum_{i=1}^{|\gamma(t(x),t(x'))|-1}(f(z_{i+1})-f(z_{i}))^2 \pi(x)\pi(x')\notag\\
    &=\sum_{i=1}^{|\gamma(t(x),t(x'))|-1}(f(z_{i+1})-f(z_{i}))^2\, \frac{q({z_{i},z_{i+1})}\,\pi(z_{i})}{q({z_{i},z_{i+1})}\,\pi(z_{i})}\pi(x)\pi(x') \notag \\
    &\le \frac{1}{c_{\text{min}}} \,\left(\max_{z\in \gamma(t(x),t(x'))}\frac{\pi(x)\pi(x')}{\pi(z)} \right) \sum_{i=1}^{|\gamma(t(x),t(x'))|-1}(f(z_{i+1})-f(z_{i}))^2\,q({z_{i},z_{i+1})} \,\pi(z_{i})\notag\\
    &\le 2\frac{1}{c_{\text{min}}}  \left(\frac{\pi(x)\pi(x')}{\pi_{\text{min}}(\gamma(t(x),t(x')))
    %\min_{z\in \gamma(t(x),t(x'))} \pi(z)
    }\right)\EE^*(f), \label{eq:bounded by diri}
 \end{align}
 where $c_{\text{min}}$ is as in \eqref{eq:activepathcond} (and is used to bound the term $1/q(z_i,z_{i+1})$), and the term $2\EE^*(f)$ arises by restricting the sum in \eqref{eq:78657857} from all of $\mathbb{S}\times \mathbb{S}$ to the particular path $\gamma(t(x),t(x'))$.  Note that  $\gamma(t(x),t(x'))$ being an active path plays a key role above since we require that $q(z_i,z_{i+1})>0$.
It follows from  \eqref{eq:bounded by diri} that the summation in (\ref{E2}) is bounded in the following manner
\begin{align}
2\sum_{\substack{(x,x')\in \mathbb S\times \mathbb S\\
x\ne x'}}&\left ( |\gamma(t(x),t(x'))|\sum_{i=1}^{|\gamma(t(x),t(x'))|-1}(f(z_{i+1})-f(z_{i}))^2 \right ) \pi(x)\pi(x') \notag \\
&\le 4\frac1{c_{\text{min}}}\, \EE^*(f) \sum_{\substack{(x,x')\in \mathbb S\times \mathbb S\\ 
x\ne x'}}|\gamma(t(x),t(x'))| \left(\frac{\pi(x)\pi(x')}{\pi_{\text{min}}(\gamma(t(x),t(x')))
    %\min_{z\in \gamma(t(x),t(x'))} \pi(z)
    }\right)
 \label{eq:sum for E2}\\
 &\le c_1 \cdot \EE^*(f),
\end{align}
for some constant $c_1 >0$,  where we utilized   \eqref{eq:sum_condition} in Assumption \ref{assump:main} for the final inequality.

\vspace{0.3cm}
\noindent
\textbf{Step 3: Upper bound for (\ref{E1}). }
By the symmetry of \eqref{E1}, it is enough to show that there exists some $c>0$ for which
\begin{align}\label{eq:target for E1 part}
\sum_{x\in \mathbb S}\sum_{i=1}^{|\gamma_x|-1}(f(x_{i+1})-f(x_{i}))^2  \pi(x) <c\cdot \EE^*(f).
\end{align}
We have
\begin{align}
&\sum_{x\in \mathbb S}\sum_{i=1}^{|\gamma_x|-1}(f(x_{i+1})-f(x_{i}))^2  \pi(x) \notag\\
&= \sum_{x\in \mathbb S}\sum_{i=1}^{|\gamma_x|-1}(f(x_{i+1})-f(x_{i}))^2\frac{q{(x_{i},x_{i+1})}\,\pi(x_{i})}{q{(x_{i},x_{i+1})}\,\pi(x_{i})}\pi(x) \notag \\
&\le \frac{1}{c_{\text{min}}}\,\left ( \sup_{x\in \mathbb S} \frac{\pi(x)}{\pi_{\text{min}}(\gamma(x))} \right ) \,\sum_{x\in \mathbb S}\sum_{i=1}^{|\gamma_x|-1}(f(x_{i+1})-f(x_{i}))^2 \,q{(x_{i},x_{i+1})}\,\pi(x_{i})  \label{eq:sum near the edge}.
\end{align}
However, the term in parentheses is bounded by \eqref{eq:supmaxassump} in Assumption \ref{assump:main}. For the double sum  in \eqref{eq:sum near the edge}, by considering how many times each particular pair $(x,z) \in \mathbb{S}\times \mathbb{S}$ could be an edge, we  have
\begin{align*}
    \sum_{x\in \mathbb S}\sum_{i=1}^{|\gamma_x|-1}(f(x_{i+1})-f(x_{i}))^2 \,q{(x_{i},x_{i+1})}\,\pi(x_{i}) &\le \overline M \sum_{x,z\in \mathbb{S}} (f(z)-f(x))^2q(x,z) \pi(x)\\
    &= 2\overline M \EE^*(f).
\end{align*}
where $\overline M$ is as in \eqref{eq:no infinitely many paths intersects}. Hence, the result is shown.
\end{proof}

\subsection{Special cases that are easier to apply for models on \texorpdfstring{$\Z^d_{\ge 0}$}{Lg}}
\label{sec:corrolaries}
The conditions in Assumption \ref{assump:main} are quite technical.  We therefore present a few corollaries that are significantly more ``user-friendly.'' 

Our first corollary considers models restricted to $\Z^d_{\ge 0}$ and states that the conditions in Assumption \ref{assump:main} hold so long as (i) the stationary distribution has a type of super-exponential decay, and (ii)  transitions of the form $\pm e_i$, where $\{e_i\}$ is the canonical basis for $\Z^d$, have positive rate so long as the transition would not push the system into a region with a negative component. 
We note that the super-exponential decay condition of \eqref{eq:super expo} below  holds for many probability distributions showing up in our  area of interest of biochemical reaction systems (see section \ref{sec:modelsandresults}).  Examples include models whose stationary distribution is a product  of Poissons \cite{AndProdForm}, some products of Poisson and Beta-binomials \cite{bibbona2020stationary}, and some models with non-mass-action kinetics \cite{anderson2016product, hong2021derivation}.  Moreover, in \cite{XuHansenWiuf2022}, a detailed analysis of the decay of the tails of the stationary distribution for one-dimensional models is given; even though without a lower bound on $\pi(x)$ we cannot deduce \eqref{eq:super expo}, it seems possible the link will be established with future work.

Note that in Corollary \ref{cor:main} below the fact that $\S = \Z^d_{\ge 0}$ is irreducible follows from condition 2 in that corollary. 
\begin{cor}\label{cor:main}
    Suppose $X$ is a positive-recurrent continuous-time Markov chain with irreducible state space $\S = \Z^d_{\ge 0}$ and stationary distribution $\pi$.  Suppose that  Assumptions \ref{intcnd} hold. Suppose further that the following two conditions hold.
    \begin{enumerate}
        \item There exist an integer $K>0$ and an $\alpha>0$ so that for each $i \in \{1,\dots, d\}$,
    \begin{align}\label{eq:super expo}
    \dfrac{\pi(x)}{\pi(x-e_i)}\le \dfrac{1}{x_i^\alpha}    
    \end{align}    
    if $x$ is such that $x_i \ge K$, and
    \item There is a $c_{\text{min}}>0$ so that for each $i\in \{1,\dots, d\}$, we have 
    \[
    \displaystyle \inf_{x\in\mathbb Z^d_{\ge 0}} q(x,x+e_i)\ge c_{\text{min}}  \quad \text{and} \quad \inf_{\substack{x \in \mathbb Z^d_{\ge 0}\\ x_i \ge 1}}q(x,x-e_i) \ge c_{\text{min}}.
    \]
    \end{enumerate}
    Then the conditions in Assumption \ref{assump:main} hold. Hence, the conclusions of Theorem \ref{thm:main for poincare} hold as well.
\end{cor}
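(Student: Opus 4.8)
The plan is to verify that the two hypotheses of Corollary~\ref{cor:main} imply all three parts of Assumption~\ref{assump:main}, so that Theorem~\ref{thm:main for poincare} applies directly. Condition~2 (uniform lower bound on the rates of $\pm e_i$ transitions) will be used to construct \emph{coordinate-axis-aligned} paths, and condition~1 (super-exponential decay of $\pi$) will be used to verify the summability condition \eqref{eq:sum_condition}. The terminal map $t(\cdot)$ plays the same role as in Example~\ref{ex:motivation}: it shifts each coordinate down by a fixed amount so that the ratio $\pi(x')/\pi_{\text{min}}(\gamma(t(x),t(x')))$ gains enough extra polynomial decay to make the double sum converge.

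First I would define the terminal state. Motivated by the example (where $t(x)=x-3$), I would pick an integer $m$ large enough (depending on $\alpha$ and $d$, roughly $m > (d+2)/\alpha$ or similar) and set $t(x)_i = \max\{x_i - m,\, 0\}$ for each coordinate $i$; equivalently, push each coordinate down by $m$ but not below $0$. The path $\gamma_x = \gamma(x,t(x))$ then walks down one coordinate at a time using $-e_i$ steps, which are active by condition~2. This immediately gives $|\gamma_x| \le dm + 1 =: \overline{L} < \infty$, establishing \eqref{eq:overlineL}. Since each transition is of the form $\pm e_i$ (size $1$), an edge can belong to only boundedly many such paths, giving \eqref{eq:no infinitely many paths intersects} (as already noted in Remark~\ref{rem:main assumption}). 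For \eqref{eq:supmaxassump}, since $\gamma_x$ only moves \emph{downward} in each coordinate and \eqref{eq:super expo} forces $\pi$ to be decreasing in each coordinate beyond $K$, the minimum of $\pi$ along $\gamma_x$ is attained near $x$ itself (or is bounded below using the finitely many states with some coordinate $< K$), so $\sup_x \pi(x)/\pi_{\text{min}}(\gamma_x) < \infty$.

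Next, for Part~2, I would define $\Gamma_{\mathcal{T}}$ by connecting two terminal states $t(x), t(x')$ via a monotone coordinate-by-coordinate path (adjusting one coordinate at a time), again active by condition~2, with length $\le \|t(x)-t(x')\|_1 \le \|x-x'\|_1$. The heart of the argument is verifying \eqref{eq:sum_condition}. The key estimate is to bound $\pi(x)\pi(x')/\pi_{\text{min}}(\gamma(t(x),t(x')))$. Along the monotone path, $\pi_{\text{min}}$ is attained at the ``topmost'' corner, which is componentwise close to $\max(t(x),t(x'))$; iterating \eqref{eq:super expo} from there up to $x$ (resp.\ $x'$) produces a factor like $\prod_i (x_i!/(x_i-m)!)^{\alpha}$-type polynomial gain of total degree $\approx m\alpha$ per coordinate. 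Choosing $m\alpha$ large enough (strictly bigger than what is needed to beat the polynomial growth of $|\gamma| \le \|x-x'\|_1$ and the dimension-counting of the double sum over $\mathbb{S}\times\mathbb{S}$) makes the summand decay fast enough for convergence; the boundary cases where some coordinate is below $K$ contribute only finitely many ``bad'' states and are absorbed into a constant. Finally, Part~3 of Assumption~\ref{assump:main} is immediate: every edge in every path we constructed is a $\pm e_i$ transition, whose rate is bounded below by $c_{\text{min}}$ by condition~2.

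The main obstacle I anticipate is the bookkeeping in the summability estimate \eqref{eq:sum_condition}: one must carefully track how iterating the one-step ratio bound \eqref{eq:super expo} across multiple coordinates, and across the ``lift'' from the terminal path's minimizing corner back up to $x$ and $x'$, yields enough polynomial decay, while simultaneously handling the $\max\{\cdot,0\}$ truncation at the boundary (where \eqref{eq:super expo} need not hold for coordinates below $K$). The cleanest route is probably to split each coordinate's contribution into a ``large'' part ($x_i \ge K+m$), where the telescoped product of \eqref{eq:super expo} gives genuine polynomial (indeed faster-than-any-polynomial after choosing $m$) decay, and a ``small'' part contributing only a bounded multiplicative constant, and then to compare the resulting sum to a product of convergent one-dimensional sums of the form $\sum_{n} n^{\,c} \pi_1(n)$ for a suitable marginal-type quantity $\pi_1$. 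Choosing the shift depth $m$ large enough relative to $\alpha$ and $d$ is what ultimately closes the argument.
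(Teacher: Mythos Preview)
Your overall plan matches the paper's proof: define a terminal map that shifts each large coordinate down by a fixed integer (the paper uses $\lceil 3/\alpha\rceil$ rather than a generic $m$), build the short paths $\gamma_x$ out of $-e_i$ steps, and connect terminal states using only $\pm e_i$ steps so that condition~2 gives the uniform rate lower bound. Your treatment of Part~1 and Part~3 of Assumption~\ref{assump:main} is correct and essentially identical to the paper's.

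The genuine gap is in your construction of the terminal-to-terminal paths and the associated estimate of $\pi_{\min}$. You describe a ``monotone coordinate-by-coordinate'' path and assert that $\pi_{\min}$ is attained at a ``topmost corner'' near $\max(t(x),t(x'))$, then propose to iterate \eqref{eq:super expo} ``from there up to $x$ (resp.\ $x'$)''. This does not work as stated: the point $\max(t(x),t(x'))$ is \emph{not} componentwise below $x$ (e.g.\ take $x=(n,0)$, $x'=(0,n')$), so you cannot telescope \eqref{eq:super expo} from that corner up to $x$. Moreover, a coordinate-at-a-time path need not pass through $\max(t(x),t(x'))$ at all, so the claimed identification of $\pi_{\min}$ is not justified.

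The paper resolves this by routing the path through the \emph{bottom} corner $t(x)\wedge t(x')$: first walk from $t(x)$ down to $t(x)\wedge t(x')$ using $-e_i$ steps, then walk up to $t(x')$ using $+e_i$ steps. Every state $z$ on this path then satisfies either $z\le t(x)$ or $z\le t(x')$ componentwise, which combined with \eqref{eq:super expo} gives the clean bound
\[
\frac{1}{\pi_{\min}(\gamma(t(x),t(x')))}\ \le\ C_K\,\max\Big\{\frac{1}{\pi(t(x))},\,\frac{1}{\pi(t(x'))}\Big\}.
\]
Now the telescoping you want does make sense, because $t(x)\le x$ and $t(x')\le x'$: iterating \eqref{eq:super expo} along the $\lceil 3/\alpha\rceil$ steps from $x$ down to $t(x)$ yields $\pi(x)/\pi(t(x))\le c\prod_i (x_i+1)^{-3}$, and the resulting double sum over $(x,x')$ is finite by a separate moment lemma (the paper's Lemma~\ref{lem: moment of super exponential}, showing $\pi(x)\le c_m\prod_i(x_i+1)^{-m}$ for every $m$). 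Once you redirect the path through the $\wedge$ rather than the $\vee$, the rest of your sketch goes through.
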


We begin with the following lemma, which states that a process satisfying the conditions of Corollary \ref{cor:main} has all moments. 

\begin{lem}\label{lem: moment of super exponential}
   Let $X$ and $\pi$ be as in Corollary \ref{cor:main}.  Let $m \ge 3$ be an integer.  Then there is a $c_m>0$, depending upon $m$, for which
   \[
    \pi(x) \le c_m \prod_{i=1}^d \frac{1}{(x_i + 1)^m},
   \]
   for all $x\in \Z^d_{\ge 0}$.
  \end{lem}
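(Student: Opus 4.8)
The statement is a moment bound: the super-exponential decay condition \eqref{eq:super expo}, which controls the one-step ratio $\pi(x)/\pi(x-e_i)$ when the $i$th coordinate is large, should be iterated to yield a polynomial upper bound on $\pi(x)$ of arbitrary order $m$. The plan is to fix an integer $m \ge 3$ and derive the product bound $\pi(x) \le c_m \prod_{i=1}^d (x_i+1)^{-m}$ by telescoping \eqref{eq:super expo} coordinate by coordinate.

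\textbf{Key steps.} First I would reduce to a one-coordinate estimate. Starting from an arbitrary $x \in \Z^d_{\ge 0}$, I would repeatedly apply \eqref{eq:super expo} in the first coordinate to relate $\pi(x)$ to $\pi(x - (x_1 - K + 1)e_1)$ or, more cleanly, to the value when the first coordinate is brought down to the threshold $K$. Concretely, for $x_1 \ge K$, telescoping gives
\[
\pi(x) = \pi(x - (x_1 - x_1^\ast)e_1)\prod_{j = x_1^\ast + 1}^{x_1} \frac{\pi(\,\cdot\, )}{\pi(\,\cdot\, - e_1)} \le \pi(\text{reduced state}) \prod_{j} \frac{1}{j^\alpha},
\]
so that the decay in one coordinate is at worst governed by $\prod_{j=K}^{x_1} j^{-\alpha}$. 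Since $\alpha > 0$, this product decays faster than any fixed power $1/x_1^m$ once $x_1$ is large enough (this is the crux: a product of $x_1^{-\alpha}$-type ratios over a growing range beats any polynomial). I would then repeat the argument in each of the remaining coordinates $i = 2, \dots, d$, each time holding the already-reduced coordinates fixed and bringing coordinate $i$ down to $K$. Iterating over all $d$ coordinates bounds $\pi(x)$ by a product of $d$ one-dimensional factors times $\pi$ evaluated on the finite box $\{0,\dots,K-1\}^d$, which is a finite set and hence has a bounded maximum value, absorbing into the constant $c_m$.

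\textbf{Handling the constant and the $(x_i+1)$ shift.} For coordinates below the threshold, $x_i < K$, the factor $(x_i+1)^{-m}$ is bounded below by $K^{-m}$, so no decay is needed there and the bound holds trivially after enlarging $c_m$. The shift from $x_i$ to $x_i + 1$ in the denominator is merely cosmetic: it ensures the right-hand side is finite and positive at $x_i = 0$, and converting $\prod j^{-\alpha}$ into $(x_i+1)^{-m}$ costs only a constant depending on $m$, $\alpha$, and $K$. I would assemble all these per-coordinate constants and the finite-box maximum into a single $c_m$.

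\textbf{Main obstacle.} The one genuinely non-routine point is making the inequality $\prod_{j=K}^{x_i} j^{-\alpha} \le (\text{const}) \cdot (x_i+1)^{-m}$ uniform in $x_i$ for the desired fixed $m$. The left side decays like $(x_i!/(K-1)!)^{-\alpha} \approx (x_i!)^{-\alpha}$, which is super-polynomial, so for any fixed $m$ the inequality holds once $x_i$ exceeds some threshold $K_m$ depending on $m$; one must check it separately on the finite range $K \le x_i \le K_m$, where it holds by choosing the constant large enough. Care is also needed to keep the iteration over coordinates clean, since each reduction step changes the base point at which the next coordinate's ratios are evaluated — but because condition \eqref{eq:super expo} is stated uniformly for all $x$ with $x_i \ge K$, the bound applies regardless of the values of the other coordinates, so the coordinates decouple and the product structure emerges without complication.
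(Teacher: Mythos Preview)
Your proposal is correct and follows essentially the same route as the paper: telescope the ratio bound \eqref{eq:super expo} coordinate by coordinate to obtain a factor of order $(x_i!)^{-\alpha}$, observe that this beats any fixed power $(x_i+1)^{-m}$ once $x_i$ exceeds a threshold depending on $m$, and absorb the finite-box remainder into the constant. The paper streamlines the bookkeeping by fixing a single threshold $K'>K$ with $(K!)^\alpha/(x!)^\alpha \le x^{-m}$ for $x\ge K'$ up front and then simply bounding the residual $\pi(u(x))$ by $1$ rather than by a finite-box maximum, but the substance is identical.
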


  \begin{proof}
  Let $K$ and $\alpha$ be as in the statement of Corollary \ref{cor:main}.  Fix a $K'>K$ so that 
  \begin{equation}\label{eq:bound8796896}
   \frac{(K!)}{(x!)^\alpha} \le \frac{1}{x^m},
  \end{equation}
  for all $x \ge K'$.
  With this choice, if $x_1 \ge K'$, then \eqref{eq:super expo} with $i =1$  implies that 
    \begin{align}
       \pi(x)&=\frac{\pi(x)}{\pi(x-e_1)}\frac{\pi(x-e_1)}{\pi(x-2e_1)}\cdots \frac{\pi(x-(x_1-K-1)e_1)}{\pi(x-(x_1-K)e_1)}\pi(x-(x_1-K)e_1) \notag \\
       &\le  \frac{1}{x_1^\alpha} \cdot \frac{1}{(x_1-1)^\alpha} \cdots \frac{1}{(K+1)^\alpha} \pi(x-(x_1 -K)e_1)\tag{repeatedly applying \eqref{eq:super expo}}\\
       &= \frac{(K!)^\alpha }{(x_1!)^\alpha} \pi(x-(x_1 -K)e_1)\notag\\
       & \le \frac{1}{x_1^{m}}\cdot \pi(x-(x_1-K)e_1),\label{eq:needed}
    \end{align}
    where the final inequality follows from \eqref{eq:bound8796896}.
         To generalize the above, let $I_{K'}(x)=\{i : x_i \ge K'\}$ be those indices for which the  $ith$ coordinate of $x$ is larger than $K'$.  For each $x$ we define $u(x)\in \Z^d_{\ge 0}$ via
    \begin{align*}
        u(x)_i=\begin{cases}
        K \quad &\text{if $i \in I_{K'}(x)$}\\
        x_i &\text{otherwise.}
        \end{cases}
    \end{align*}
    Applying the above inequality to those indices $I_{K'}(x)$, there is a   $c_m>0$  such that for any $x \in \Z^d_{\ge 0}$ 
    \begin{align*}
       \pi(x) &\le \prod_{i\in I_{K'}(x)} \frac{1}{x_i^m}\cdot \pi\left (x-\sum_{i\in I_K(x)}(x_i-K)e_i \right ) = \prod_{i\in I_{K'}(x)} \frac{1}{x_i^m}\pi(u(x)) \nonumber \\
       &\le c_m\prod_{i=1}^d \frac{1}{(x_i+1)^m} \pi(u(x)) \le c_m\prod_{i=1}^d \frac{1}{(x_i+1)^m},       
    \end{align*}    
    where we used that $\pi(u(x)) \le 1$ in the final inequality.  Hence, the result is shown.
\end{proof}

We turn to the proof of Corollary \ref{cor:main}.

\begin{proof}[Proof of Corollary \ref{cor:main}]

We begin by defining $t(x)$, the terminal state, for each $x \in \Z^d_{\ge 0}$.  To do so, we begin by letting $k_0 \ge K+\frac3{\alpha} + 1$ be a positive integer 
and let $I_{k_0}(x) = \{i : x_i \ge k_0\}$ be those indices for which $x_i \ge k_0$.  We then define the terminal state for $x$ component-wise via
\begin{equation}
t(x)_i = \begin{cases}
x_i - \lceil \frac{3}{\alpha}\rceil & \text{ if } i \in I_{k_0}(x)\\
x_i & \text{otherwise},
\end{cases}
\end{equation}
where $\lceil \frac{3}{\alpha}\rceil$ is the smallest integer larger than $\frac{3}{\alpha}$ (i.e., we round up).

  We turn to defining $\gamma_x=\gamma(x,t(x))$ for each $x$.  If $\|x\|_\infty < k_0$, in which case $I_{k_0}(x)$ is empty, then $t(x) = x$ and we take $\gamma(x,t(x)) = (x)$.  Now suppose that  $\|x\|_{\infty} \ge k_0$.  In this case, $I_{k_0}(x)$ is non-empty and for each $i\in I_{k_0}(x)$ we will subtract off $e_i$ exactly $\lceil \frac{3}{\alpha}\rceil$ times.  Note that there are $\displaystyle \frac{(|I_{k_0}(x)|\cdot \lceil \frac{3}{\alpha}\rceil)!}{(\lceil \frac{3}{\alpha}\rceil !)^{|I_{k_0}(x)|}}$ possible such paths connecting $x$ and $t(x)$ using only subtractions of $e_i$ for $i \in I_{k_0}(x)$.  We choose one of those paths at random.  

For example, consider the situation with $d =3$, $\alpha=1$, 
and $x = (k_0 + 2, k_0 - 1, k_0 + 1)$.  Here, $I_{k_0}(x)=\{1,3\}$ and $\lceil \frac{3}{\alpha}\rceil = 3$ and so $t(x) = (k_0-1,k_0-1,k_0 - 2)$.  There are $\frac{(2\cdot 3)!}{(3!)^2} = 20$ possible paths to choose from. If  we choose the path that decreases the first component 3 times in a row followed by the third component 3 times in a row, then the specific path  $\gamma_x = \gamma(x,t(x))$ is 
\begin{equation*}
    ((k_0 + 2, k_0 - 1, k_0 + 1), \dots , (k_0 - 1, k_0 - 1, k_0 + 1),\dots, (k_0-1, k_0 - 1,k_0-2)).
\end{equation*}

Returning to the general case, we now observe the following.
\begin{itemize}
    \item The paths constructed have a maximum length of $\lceil \frac{3}{\alpha}\rceil d$, so \eqref{eq:overlineL} holds.
    
    \item Each edge in $\gamma(x,t(x))$ must be within $\lceil \frac{3}{\alpha}\rceil d$ transitions  taken from the set $\{-e_i: i \in \{1,\dots, d\}\}$.  Hence, the number of such paths  that a particular edge can be contained within is bounded and \eqref{eq:no infinitely many paths intersects} holds. 
    
    \item Combining \eqref{eq:super expo} with the construction above immediately yields 
    \[
    \sup_{x\in \Z^d_{\ge 0}} \frac{\pi(x)}{\pi_{\text{min}}(\gamma_x)}
    %\min_{z\in \gamma_x}  \pi(z)} 
    \le 1,
    \]
    and so   \eqref{eq:supmaxassump} holds.  
\end{itemize}
Hence, condition 1 of Assumption \ref{assump:main} holds.

To verify condition 2 of Assumption \ref{assump:main}, we first
point out a bound we will use later.  Doing an analysis similar to that found in \eqref{eq:needed}, we may apply \eqref{eq:super expo} one time for each edge in the path $\gamma(x,t(x))$ to conclude that there exist constants $c_1>0$ and $c_2>0$ 
that are independent of the choice of  $x \in \Z^d_{\ge 0}$, such that
\begin{align}\label{eq:x and t(x)}
    \frac{\pi(x)}{\pi(t(x))} &\le  \prod_{i \in I_{k_0}(x)} \left(\frac{c_1}{(x_i)^{\alpha}}\right)^{\lceil \frac{3}{\alpha}\rceil} \le \prod_{i= 1}^d \frac{c_2}{(x_i+1)^3}.
\end{align}

We turn to the paths connecting the terminal states.
  Recall that $\mathcal{T}$ is the set of terminal states $\{t(x): x \in \Z^d_{\ge 0}\}$.
 We must still define $\Gamma_{\mathcal{T}}$ so that \eqref{eq:sum_condition} holds.
Towards that end, select a pair of terminal states: $t(x),t(x') \in \mathcal{T}$, with $t(x) \ne t(x')$.    
We arbitrarily choose to orient the path as starting from $t(x)$ and terminating at $t(x')$.  That is, we construct the path $\gamma(t(x),t(x'))$.
Moreover, to construct the path $\gamma(t(x),t(x'))$ we only use transitions of the form $\{\pm e_i, i \in \{1,\dots d\}\}$, where, as always, $\{e_i\}$ is the canonical basis in $\Z^d$.  

 For two vectors $x,y\in\Z^d$ we denote by $x\wedge y$ the vector whose $i$th coordinate is $x_i \wedge y_i = \min\{x_i,y_i\}$.
 The main idea for the construction of the desired $\gamma(t(x),t(x'))$ is to use two sub-paths: one connects $t(x)$ to $t(x)\wedge t(x')$ by only using transitions of the form $\{-e_i, i \in \{1,\dots, d\}\}$, and the other one connects $t(x)\wedge t(x')$ to $t(x')$ by only using transitions of the form $\{e_i, i \in \{1,\dots, d\}\}$.
 
 We now make this precise.   Let $I^-=\{ i : t(x)_i > t(x')_i\}$ and $I^+=\{ i : t(x)_i < t(x')_i\}$.  Note that at least one of $I^-$ or $I^+$ is non-empty since $t(x)\ne t(x')$.  Note that there are 
 \[
  \frac{\left(\sum_{i\in I^-} (t(x)_i - t(x')_i)\right)!}{\prod_{i\in I^-} (t(x)_i-t(x')_i)!}
 \]
 paths connecting $t(x)$ to $t(x)\wedge t(x')$ using only transitions of the form $-e_i$ for those $i\in I^-$.  The particular choice of path does not matter and we select one at random.  We now have our path, $\gamma^- = \gamma(t(x),t(x)\wedge t(x'))$ connecting $t(x)$ to $t(x) \wedge t(x')$.  
 
 We then similarly connect $t(x)\wedge t(x')$ to $t(x')$ via a path using only transitions of the form $e_i$ for $i \in I^+$.  We denote this path via $\gamma^+ = \gamma(t(x)\wedge t(x'), t(x'))$.  The path connecting $t(x)$ to $t(x')$, denoted $\gamma(t(x),t(x'))$, is then defined by moving first along $\gamma^-$ and then along $\gamma^+.$
 
 Note that, by construction, if $z$ is an element of $\gamma(t(x),t(x'))$, i.e. a state visited along the path $\gamma^-$ or $\gamma^+$, then we either have $z \le t(x)$ (meaning $z_i \le t(x)_i$ for all $i\in \{1,\dots, d\}$) or $z \le t(x')$.  Combining this fact with \eqref{eq:super expo} implies that 
\begin{align}\label{eq:min pi z}
    \frac{1}{\displaystyle \pi_{\text{min}}(\gamma(t(x),t(x')))}
    %\min_{z\in\gamma(t(x),t(x'))}\pi(z)} 
    \le  C_K \max \left \{\frac{1}{\pi(t(x))}, \frac{1}{\pi(t(x'))} \right \},
\end{align}
where the constant $C_K>0$ is needed since some coordinates of $t(x)$ or $t(x')$ may be smaller than $K$.

Finally,  note  that 
 \begin{equation}\label{eq:bound1111}
    |\gamma(t(x),t(x')| =  \sum_{i \in I^+ \cup I^-} |t(x)_i - t(x')_i| \le |x| + |x'|.
 \end{equation}

We may combine all of the above to show that \eqref{eq:sum_condition} holds:  
\begin{align}
    &\sum_{\substack{(x,x')\in \Z^d_{\ge 0}\times \Z^d_{\ge 0}\\
    x\ne x'}} |\gamma(t(x),t(x'))| \left( \frac{\pi(x)\pi(x')}{\pi_{\text{min}}(\gamma(t(x),t(x')))}\right) \notag\\
    %\min_{z\in \gamma(t(x),t(x'))} \pi(z)}\right) \notag\\
    &\le C_K \sum_{\substack{(x,x')\in \Z^d_{\ge 0}\times \Z^d_{\ge 0}\\
    x\ne x'}} (|x|+|x'|) \pi(x)\pi(x')\max \left \{\frac{1}{\pi(t(x))}, \frac{1}{\pi(t(x'))} \right \}\tag{using \eqref{eq:min pi z} and \eqref{eq:bound1111}} \\
     &\le C_K \sum_{\substack{(x,x')\in \Z^d_{\ge 0}\times \Z^d_{\ge 0}\\
    x\ne x'}} (|x|+|x'|) \pi(x)\pi(x')\left (\frac{1}{\pi(t(x))} + \frac{1}{\pi(t(x'))} \right )\notag\\
     &\le C_K \cdot c_3 \sum_{\substack{(x,x')\in \Z^d_{\ge 0}\times \Z^d_{\ge 0}\\
    x\ne x'}} (|x|+|x'|) \left (\pi(x')\prod_{i=1}^d\frac{1}{(x_i+1)^3} + \pi(x)\prod_{i=1}^d\frac{1}{(x'_i+1)^3} \right )\tag{from \eqref{eq:x and t(x)}},
\end{align}
which is finite by applying Lemma \ref{lem: moment of super exponential} with $m = 3$.
Hence, we have demonstrated that condition 2 of Assumption \ref{assump:main} holds.

Finally, since all the paths we constructed above only consist of birth and death events (i.e. the transitions given by $e_i$ and $-e_i$), the second condition of Corollary \ref{cor:main} (pertaining to the infimum of the rates)  implies that condition 3 of Assumption \ref{assump:main} holds.  Hence, the proof is complete.
\end{proof}

If the stationary distribution $\pi$ is of a product form, while also satisfying condition 1 of Corollary \ref{cor:main}, then we can relax the second condition in Corollary \ref{cor:main}.  That condition required that births and deaths can happen for each index at each state in $\Z^d_{\ge 0}$ (so long as the transition did not push the process out of $\Z^d_{\ge 0}$), and moreover that the rates for those transitions had a uniform lower bound. 
Loosely speaking, the 
new condition is that there is a partition of the indices $\{1,\dots, d\}$, denoted by the disjoint, non-empty sets $\{J_0,\dots, J_m\}$ so that (i) the transitions $\pm e_i$ have positive rate for all states if $i \in J_0$ (so long as the transition does not push the process negative in the $i$th component), and (ii) the transitions $\pm e_i$, for $i \in J_\ell$, have positive rate for any state for which $x_j$ is greater than some given threshold (denoted $N$ below) for some $j \in \cup_{i = 0}^{\ell-1}J_i$.  Moreover, the rates of these transitions must still have a uniform lower bound. Thus, loosely, the components associated to the  indices in $J_0\cup \cdots \cup J_{\ell-1}$ act as catalysts, or at least switches, for the birth and death transitions for the components associated to the indices in $J_\ell$.  Note that  if $J_0 = \{1,\dots, d\}$ then this condition is the same as condition 2 of Corollary \ref{cor:main}.

Note that in the corollary below the fact that $\S = \Z^d_{\ge 0}$ is irreducible follows from conditions 2 and 3. 
\begin{cor}\label{cor:relaxed}
    Suppose $X$ is an irreducible and positive-recurrent continuous-time Markov chain with countable state space $\S = \Z^d_{\ge 0}$ and stationary distribution $\pi$ for which Assumption \ref{intcnd}  holds.
    Suppose that the following three conditions hold.
    \begin{enumerate}
        \item $\pi(x)=\prod_i^d \pi_i(x_i)$ where $\pi_i$ is a probability measure on $\mathbb Z_{\ge 0}$.  Moreover, there is an integer $K\ge 1$ and a constant $\alpha>0$ so that for each $i\in \{1,\dots,d\}$ we have  $\dfrac{\pi_i(n)}{\pi_i(n-1)} \le \dfrac{1}{n^\alpha}$ whenever $n \ge K$.  
        
        \item There exists an $c_{\text{min}} >0$ and a non-empty subset $J_0 \subset \{1,2,\dots,d\}$ so that for each $i\in J_0$ we have 
        \begin{align*}
         q(x,x+e_i)&\ge c_{\text{min}} 
        \quad \text{for all $x\in \Z^d_{\ge 0}$},\\
        q(x,x-e_i) &\ge c_{\text{min}} \quad \text{for all $x\in \Z^d_{\ge 0}$ with $x_i \ge 1$}
        \end{align*}

        \item There exist disjoint subsets of $\{1,\dots,d\}$, denoted $J_1,\dots, J_m$, so that $\{J_0,J_1, \dots, J_m\}$ is a partition of $\{1,\dots, d\}$, and there exists an integer $N\ge 1$  so that for each $i \in J_\ell$ with $\ell \ge 1$, we have
        \begin{enumerate}
            \item[(i)] $q(x,x+e_i)\ge c_{\text{min}}$ when $x_j\ge N$ for some $j\in J_0 \cup \cdots \cup J_{\ell-1}$, and 
            \item[(ii)] $q(x,x-e_i)\ge c_{\text{min}}$ if $x_i\ge 1$ and $x_j\ge N$ for some $j\in J_0 \cup \cdots \cup J_{\ell-1}$.
            \end{enumerate}
    \end{enumerate}
Then the conditions in Assumption \ref{assump:main} hold. Hence, the conclusions of Theorem \ref{thm:main for poincare} hold as well.
\end{cor}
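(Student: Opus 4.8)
The plan is to verify the three conditions of Assumption \ref{assump:main} while reusing as much of the proof of Corollary \ref{cor:main} as possible. Since $\pi$ is of product form, condition 1 of Corollary \ref{cor:relaxed} gives $\pi(x)/\pi(x-e_i)=\pi_i(x_i)/\pi_i(x_i-1)\le 1/x_i^\alpha$ whenever $x_i\ge K$, which is exactly \eqref{eq:super expo}. Consequently Lemma \ref{lem: moment of super exponential} applies verbatim, and the estimate \eqref{eq:x and t(x)} continues to hold with the \emph{same} terminal state $t(x)$ used in Corollary \ref{cor:main}, namely $t(x)_i=x_i-\lceil\tfrac{3}{\alpha}\rceil$ when $x_i\ge k_0$ and $t(x)_i=x_i$ otherwise, where I take $k_0$ large enough that $k_0-\lceil\tfrac{3}{\alpha}\rceil\ge\max(K,N)$. (Replacing $N$ by $\max(N,K)$ if necessary, I assume $N\ge K$ throughout, which only strengthens the catalytic hypotheses.) Thus all the quantitative $\pi$-estimates driving \eqref{eq:supmaxassump} and \eqref{eq:sum_condition} are already available. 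The only genuinely new issue is \emph{activeness}: because births and deaths in the directions $J_\ell$ with $\ell\ge1$ are only guaranteed positive when some coordinate in $J_0\cup\cdots\cup J_{\ell-1}$ is at least $N$, the monotone $\pm e_i$ paths of Corollary \ref{cor:main} need not be active and must be modified.

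The key device is to fix once and for all an index $i_0\in J_0$ (nonempty by hypothesis) and use it as a \emph{universal catalyst}: since $J_0\subseteq J_0\cup\cdots\cup J_{\ell-1}$ for every $\ell\ge1$, forcing $x_{i_0}\ge N$ simultaneously activates births and deaths in \emph{all} higher levels. To build $\gamma_x=\gamma(x,t(x))$ I would therefore (i) move $i_0$ up from $x_{i_0}$ to $\max(N,x_{i_0})$ using $J_0$-births (always active); (ii) with $x_{i_0}\ge N$ held fixed, decrease each large coordinate $i\in I_{k_0}(x)\setminus\{i_0\}$ by $\lceil\tfrac{3}{\alpha}\rceil$ using the now-active deaths; and (iii) move $i_0$ back down to $t(x)_{i_0}$ using $J_0$-deaths. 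Every edge is a $\pm e_i$ transition that is active with rate $\ge c_{\text{min}}$ by conditions 2 and 3 of Corollary \ref{cor:relaxed}, so \eqref{eq:activepathcond} holds; the length is at most $2N+d\lceil\tfrac{3}{\alpha}\rceil$, giving \eqref{eq:overlineL}, and \eqref{eq:no infinitely many paths intersects} then follows from Remark \ref{rem:main assumption} since all steps have unit size. The crucial point for \eqref{eq:supmaxassump} is that the extra pumping of $i_0$ ranges only over the \emph{bounded} set of values $\le\max(N,k_0)$, so the multiplicative distortion $\pi_{i_0}(\cdot)/\pi_{i_0}(x_{i_0})$ incurred in steps (i) and (iii) is bounded below by a positive constant $c_1$ independent of $x$ (a minimum of finitely many positive ratios of the fixed mass function $\pi_{i_0}$); since step (ii) only lowers coordinates that stay $\ge K$ and hence can only increase $\pi$, one obtains $\pi_{\text{min}}(\gamma_x)\ge c_1\,\pi(x)$ and thus $\sup_x \pi(x)/\pi_{\text{min}}(\gamma_x)\le 1/c_1<\infty$.

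For $\Gamma_{\mathcal T}$ I would connect a pair $t(x),t(x')$ by the same down-then-up skeleton as in Corollary \ref{cor:main}, again catalyzed by $i_0$. Writing $a$ (resp.\ $a'$) for $t(x)$ (resp.\ $t(x')$) with its $i_0$-coordinate raised to $\max(N,t(x)_{i_0})$ (resp.\ $\max(N,t(x')_{i_0})$), the path is: raise $i_0$ from $t(x)_{i_0}$ to $a_{i_0}$; run the monotone path from $a$ down to $a\wedge a'$ and then up to $a'$ (throughout which $i_0$ stays in $[\,a_{i_0}\wedge a'_{i_0},\,a_{i_0}\vee a'_{i_0}\,]\subseteq[N,\infty)$, so every higher-level move is active); and finally lower $i_0$ from $a'_{i_0}$ to $t(x')_{i_0}$. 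The essential subtlety, and the main obstacle, is to preserve the estimate $\pi_{\text{min}}(\gamma(t(x),t(x')))\ge c\,\min\{\pi(t(x)),\pi(t(x'))\}$ of \eqref{eq:min pi z}. This is exactly why the construction raises $i_0$ only as far as $\max(N,\cdot)$: any \emph{large} target value of the $i_0$-coordinate is reached only inside the monotone phase, where each visited state $z$ satisfies $z\le t(x)$ or $z\le t(x')$ in all coordinates except possibly $i_0$, on which it contributes a factor $\pi_{i_0}(z_{i_0})\ge\pi_{i_0}(a_{i_0})$ (down phase) or $\ge\pi_{i_0}(a'_{i_0})$ (up phase); combined with the bounded distortion $\le 1/c_1$ from the two pumping phases and the usual small-coordinate constant $C_K$, this yields the displayed bound.

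With \eqref{eq:min pi z} in hand and $|\gamma(t(x),t(x'))|\le |x|+|x'|+2N$ as in \eqref{eq:bound1111}, the finiteness of the sum \eqref{eq:sum_condition} follows just as in the computation at the end of the proof of Corollary \ref{cor:main}, using \eqref{eq:x and t(x)} and Lemma \ref{lem: moment of super exponential} with $m=3$ (the additive $|x|+|x'|$ factor is absorbed because $\pi$ has all moments). Throughout I would tacitly loop-erase each constructed path to make it self-avoiding as Definition \ref{Def:active} requires; excising a loop can only shorten a path and raise its $\pi_{\text{min}}$, so all bounds are preserved. Finally, when $J_0=\{1,\dots,d\}$ no pumping is ever triggered and the entire argument collapses to that of Corollary \ref{cor:main}, as expected. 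Having verified the three parts of Assumption \ref{assump:main}, the conclusions of Theorem \ref{thm:main for poincare} follow.
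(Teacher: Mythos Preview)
Your argument is correct, and it follows a genuinely different route from the paper's own proof. The paper does not retain the terminal map from Corollary~\ref{cor:main}; instead it \emph{redefines} $t(x)$ so that every terminal state lies in the high region $D_K=\{y:y_i\ge N+K+1\text{ for all }i\}$. Concretely, for $x$ with some small coordinate the paper first pumps \emph{all} low coordinates up to a common level $N_{K,\alpha}$---using the full cascade $J_0\to J_1\to\cdots\to J_m$ in order, so that each step is activated by the previous one---and only then lowers each coordinate by $\lceil 3/\alpha\rceil$. Because every terminal state is then uniformly high in every coordinate, the paths in $\Gamma_{\mathcal T}$ can be copied verbatim from Corollary~\ref{cor:main} and are automatically active; the paper closes with a counting argument over the finite fibers $B_y=\{x:t_{\text{inter}}(x)=y\}$ to transfer the sum \eqref{eq:sum_condition} from $D\times D$ back to $\Z^d_{\ge0}\times\Z^d_{\ge0}$.

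Your approach instead keeps the Corollary~\ref{cor:main} terminal map untouched and exploits the observation that a \emph{single} catalyst $i_0\in J_0$, once raised to level $N$, simultaneously activates births and deaths in every higher layer (since $J_0\subseteq J_0\cup\cdots\cup J_{\ell-1}$ for all $\ell\ge1$). This is more economical---it uses strictly less of the layered hypothesis---and lets you recycle the estimates \eqref{eq:x and t(x)}, \eqref{eq:min pi z}, and the final summation from Corollary~\ref{cor:main} essentially unchanged, with only bounded multiplicative distortions from the short pumping segments. The paper's construction, on the other hand, has the virtue that once the terminal region $D_K$ is set up, no catalyst bookkeeping is needed inside $\Gamma_{\mathcal T}$ at all. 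Both routes verify Assumption~\ref{assump:main}; yours is shorter but relies on the (correct) realization that the cascade structure is stronger than what is actually needed.
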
 

\begin{proof}
Let $N_{K,\alpha}=N+K+\lceil\frac{3}{\alpha} \rceil+1$.
First, we define the terminal point $t(x)$ for each $x$ and construct $\gamma_x := \gamma(x,t(x))$.  
Let 
\begin{align*}
    D:=\{ x \in \mathbb Z^d_{\ge 0} : x_i \ge N_{K,\alpha} \text{ for each $i$}\}.
\end{align*}
If $x\in D$, then   we define $t(x)$ component-wise via  
\[
    t(x)_i=x_i-\left \lceil \frac{3}{\alpha} \right\rceil \text{ for each $i$}.
\]
As in the proof of Corollary \ref{cor:main}, for this case we define $\gamma_x = \gamma(x,t(x))$ to be one of the active  paths that can be constructed from $x$ to $t(x)$ that only utilizes transitions of the form $-e_i$, where, as always, $\{e_i\}$ is the canonical basis of $\Z^d$.

   Suppose now that $x\in D^c$. In this case, we have $x_i < N_{K,\alpha}$ for at least one $i \in \{1,\dots,d\}$. We define two  points: an intermediate point, $t_{\text{inter}}(x),$ and then finally the terminal point $t(x)$.  First, we define $t_{\text{inter}}(x)$ componentwise via
   \begin{align*}
       (t_{\text{inter}}(x))_i =\begin{cases}
            N_{K,\alpha} \quad &\text{if $x_i < N_{K,\alpha}$}\\
           x_i &\text{if $x_i \ge N_{K,\alpha}$}.
       \end{cases}
   \end{align*}
   Next, we define $t(x)$, whose $i$th coordinate is
   \begin{align*}
    t(x)_i=(t_{\text{inter}}(x))_i-\left\lceil \frac{3}{\alpha} \right\rceil.
    \end{align*}

    We now construct a path, $\gamma_x^1 = \gamma(x,t_{\text{inter}}(x))$ connecting $x$ to $t_{\text{inter}}(x)$ in the following manner.
    \begin{enumerate}
        \item For $i \in J_0$ with $x_i < N_{K,\alpha}$, use the transitions $+e_i$ exactly $N_{K,\alpha}-x_i$ times.  Repeat for each such $i \in J_0$, with the ordering of the elements of $J_0$ not important, and any will do.

        \item Repeat the above for $J_\ell$, $\ell = 1, \dots, m$, taking the $\ell$ in sequential order ($\ell = 1$ first, $\ell =2$ second, etc.). The ordering of the elements of each $J_\ell$ is not important, and any will do.
    \end{enumerate}
     Next, we now construct a path, $\gamma_x^2 = \gamma(t_{\text{inter}}(x), t(x))$ connecting  $t_{\text{inter}}(x)$ to $t(x)$ in the following manner. 

    \begin{enumerate}
        \item For $i \in J_0$, use the transitions $-e_i$ exactly $\lceil \frac{3}{\alpha}\rceil$ times.  Repeat for each  $i \in J_0$, with the ordering of the elements of $J_0$ not important, and any will do.

        \item Repeat the above for $J_\ell$, $\ell = 1, \dots, m$, taking the $\ell$ in sequential order ($\ell = 1$ first, $\ell =2$ second, etc.). The ordering of the elements of each $J_\ell$ is not important, and any will do.
    \end{enumerate}
    Finally, we take $\gamma_x=\gamma(x,t(x))$ to be the path that first moves along $\gamma_x^1$ and then along $\gamma_x^2$.

For example, consider the situation with $d =3$, and $J_0=\{1\}, J_1=\{2\}$, and $J_3=\{3\}$. We also assume that $\alpha=1$ so that $\lceil \frac{3}{\alpha}\rceil = 3$. Then for 
$x = (N_{K,\alpha} - 2, N_{K,\alpha} - 1, N_{K,\alpha} + 1) \in D^c$, 
we have 
\[
t_{\text{inter}}(x) =(N_{K,\alpha}, N_{K,\alpha}, N_{K,\alpha} + 1) \quad \text{ and } \quad t(x)=(N_{K,\alpha}-3,N_{K,\alpha}-3,N_{K,\alpha}-2).
\]
In this case, $\gamma^1_x$ is 
\begin{align*}
    \big(&(N_{K,\alpha} - 2, N_{K,\alpha} - 1, N_{K,\alpha} + 1),
    (N_{K,\alpha} -1, N_{K,\alpha} - 1, N_{K,\alpha} + 1),\\
    &(N_{K,\alpha}, N_{K,\alpha} - 1, N_{K,\alpha} + 1),
    (N_{K,\alpha}, N_{K,\alpha}, N_{K,\alpha} + 1) \big),    
\end{align*}
and $\gamma^2_x$ is
\begin{align*}
    \big(&(N_{K,\alpha}, N_{K,\alpha}, N_{K,\alpha} + 1),(N_{K,\alpha}-1, N_{K,\alpha}, N_{K,\alpha} + 1),(N_{K,\alpha}-2, N_{K,\alpha}, N_{K,\alpha} + 1),\\
    &(N_{K,\alpha}-3, N_{K,\alpha}, N_{K,\alpha} + 1), (N_{K,\alpha}-3, N_{K,\alpha}-1, N_{K,\alpha} + 1), (N_{K,\alpha}-3, N_{K,\alpha}-2, N_{K,\alpha} + 1),\\
    &(N_{K,\alpha}-3, N_{K,\alpha}-3, N_{K,\alpha} + 1),(N_{K,\alpha}-3, N_{K,\alpha}-3, N_{K,\alpha}),
    (N_{K,\alpha}-3, N_{K,\alpha}-3, N_{K,\alpha}-1),\\
    &(N_{K,\alpha}-3, N_{K,\alpha}-3, N_{K,\alpha}-2) \big).
\end{align*}

   Returning to the general case, we now observe the following.
\begin{itemize}
    \item For each $x\in D$, we have $|\gamma_x| = \lceil \frac{3}{\alpha} \rceil d$.  For $x \notin D$, we have  $|\gamma^1_x|\le N_{K,\alpha} d$ and $|\gamma^2_x|=  \lceil \frac{3}{\alpha} \rceil d$.  Thus,  $|\gamma_x|= |\gamma^1_x|+ |\gamma^2_x| \le N_{K,\alpha} d+\lceil \frac{3}{\alpha} \rceil d$.   Therefore, condition \eqref{eq:overlineL} holds with $\overline L = N_{K,\alpha} d+\lceil \frac{3}{\alpha} \rceil d.$
    \item Each edge in $\gamma(x,t(x))$ must be within either $\lceil \frac{3}{\alpha} \rceil d+1$ transitions (if $x\in D$) or $N_{K,\alpha} d+ \lceil \frac{3}{\alpha} \rceil d$ transitions (if $x\in D^c$) of $x$. Hence the number of such paths that a particular edge can be contained within is bounded and \eqref{eq:no infinitely many paths intersects} holds. 
\end{itemize}

    Now we show that condition \eqref{eq:supmaxassump} holds. For $x\in D$, it holds that 
    \begin{align}\label{eq:condition 20 for inside D}
        \sup_{x\in D}\frac{\pi(x)}{\pi_{\text{min}}(\gamma_x)}
        %\min_{z\in \gamma_x}\pi(z)} 
        \le 1
    \end{align}
     since for $x\in D$, we used the same construction of $\gamma_x$ as  in the proof of Corollary \ref{cor:main}. Now we suppose that $x\in D^c$. By the construction of $\gamma^1_x$ and by the assumption that the distribution $\pi$ is of product-form, for any state $x_m$, with $m\ge 2$, contained within the path $\gamma^1_x$ we have
   \begin{align*}%\label{eq:ratio in Dc}
       \frac{\pi(x_{m-1})}{\pi(x_m)} = \frac{\pi_{i}(x_{m-1,i})}{\pi_{i}(x_{m-1,i}+1)} \quad \text{for some $i$ for which  $x_i<N_{K,\alpha}$,}
   \end{align*}
  which is uniformly bounded, and where where we denoted the $i$ th component of $x_{m-1}$ by $x_{m-1,i}$.
   Hence,  
   \begin{align}
   \label{eq:8976976}
    \sup_{x\in D^c} \dfrac{\pi(x)}{\pi_{\text{min}}(\gamma^1_x)}
    %\min_{z\in \gamma^1_x}\pi(z)}
    < \infty.
    \end{align}
    
   Next,
 \begin{align}
 \begin{split}
 \label{eq:23456711}
     \sup_{x\in D^c}\frac{\pi(x)}{\pi_{\text{min}}(\gamma^2_x)}  %\min_{z\in \gamma^2_x} \pi(z)} 
     &= \sup_{x\in D^c} \frac{\pi(x)}{\pi(t_{\text{inter}}(x))}   \cdot  \frac{\pi(t_{\text{inter}}(x))}{ \pi_{\text{min}}(\gamma^2_x) }\\
     %\min_{z\in \gamma^2_x}\pi(z)}\\
     &\le \sup_{x\in D^c} \frac{\pi(x)}{\pi(t_{\text{inter}}(x))} \cdot   \sup_{x\in D^c} \frac{\pi(t_{\text{inter}}(x))}{ \pi_{\text{min}}(\gamma^2_x)}
     %\min_{z\in \gamma^2_x}\pi(z)} 
     < \infty 
     \end{split}
 \end{align}  
 where the term $\sup_{x\in D^c} \frac{\pi(x)}{\pi(t_{\text{inter}}(x))}$ is finite due to \eqref{eq:8976976}, and the term $\sup_{x\in D^c} \dfrac{\pi(t_{\text{inter}}(x))}{\pi_{\text{min}}(\gamma^2_x)}$ 
 %\min_{z\in \gamma^2_x}\pi(z)}$ 
 is also finite by the same argument used in the proof of Corollary \ref{cor:main}. Combining \eqref{eq:8976976} and \eqref{eq:23456711} yields
 \begin{align}\label{eq:condition 20 for outside D}
        \sup_{x\in D^c}\frac{\pi(x)}{\pi_{\text{min}}(\gamma_x)}
        %\min_{z\in \gamma(x)}\pi(z)}
        < \infty.
    \end{align}
Combining \eqref{eq:condition 20 for inside D} and \eqref{eq:condition 20 for outside D} implies that \eqref{eq:supmaxassump} holds.

We now construct a set of paths $\gamma(t(x),t(x'))$ that satisfies \eqref{eq:sum_condition}. Let $N_{K} = N+K+1$ and let
\[
    D_{K} := \{x\in \Z^d_{\ge 0} : x_i \ge N_{K} \text{ for each $i$}\}.
\]
Note that for any $x$ we have   $t(x)\in D_{K}$, and that for any $y\in D_{K}$ all of the birth and death rates for each index $i \in \{1,\dots,d\}$ at $y$ are lower bounded by $c_{\text{min}}$ due to conditions 2 and 3 in the statement of the corollary.  Hence for each pair of  states $x$ and $x'$, we use exactly the same construction of $\gamma(t(x),t(x'))$ that we used in the proof of Corollary \ref{cor:main}.
We highlight that for any $z\in \gamma(t(x),t(x')) $ we have
\begin{align*}
    \min\{t(x)_i, t(x')_i\} \le z_i \le \max\{t(x)_i, t(x')_i\},
\end{align*}
 by the construction of the path $\gamma(t(x),t(x'))$. Hence $z\in D_{K}$ and  the birth and death rates for each index $i$ is lower bounded by $c_{\text{min}}$.  Therefore $\gamma(t(x),t(x'))$ is always an active path.

Now we show that \eqref{eq:sum_condition} holds with the set of paths $\Gamma_{\mathcal T}$ we constructed above.  First note that for each $y\in D$ we used  exactly the same construction of $\gamma_y$ as in the proof of Corollary \ref{cor:main}. Hence, by the same argument as in the proof of Corollary \ref{cor:main}, and the fact that the path $\gamma(t(y),t(y'))$ below is necessarily active, we have that
\begin{align}\label{eq:by the previous cor}
    \sum_{\substack{(y,y')\in D\times D \\ y\neq y'}} |\gamma(t(y),t(y'))| \left( \frac{\pi(y)\pi(y')}{\pi_{\text{min}}(\gamma(t(y),t(y')))}\right)
    %\min_{z\in \gamma(t(y),t(y'))}\pi(z)
    < \infty,
\end{align}
where it is important to notice that the sum is over pairs $D\times D$ and not $\Z^d_{\ge 0} \times \Z^d_{\ge 0}$.

To expand the sum to all of $\Z^d_{\ge 0} \times \Z^d_{\ge 0}$ we simply consider which states $x\in \Z^d_{\ge 0}$ get mapped to $y\in D$ via $t_{\text{inter}}$.  Hence, for $y \in D$, we define $B_y=\{x\in \mathbb Z^d_{\ge 0} : y=t_{\text{inter}}(x) \}$.  Note that there is a positive integer $c\ge 1$ for which 
\begin{align}\label{eq:finite By}
    \sup_{y\in D} |B_y| :=  c < \infty.
\end{align}
 %Note that $B_y$ is always non-empty since $y\in B_y$.  
 Next, note that by \eqref{eq:8976976} there exists a $c'>0$ so that for any $y\in D$ and any $x \in B_y$ we have $\pi(x)\le c' \pi(t_{\text{inter}}(x)) = c'\pi(y)$.  Thus,
 \begin{align*}
&\sum_{\substack{(x,x')\in \Z^d_{\ge 0}\times \Z^d_{\ge 0}\\
    x\ne x'}} |\gamma(t(x),t(x'))| \left( \frac{\pi(x)\pi(x')}{\pi_{\text{min}}(\gamma(t(x),t(x')))}\right)\\
    %\min_{z\in \gamma(t(x),t(x'))} \pi(z)}\right)\\
\le 
&c(c')^2\sum_{\substack{(y,y')\in D \times D \\ y\neq y'}} |\gamma(t(y),t(y'))| \left( \frac{\pi(y)\pi(y')}{\pi_{\text{min}}(\gamma(t(y),t(y')))}\right),
%\min_{z\in \gamma(t(y),t(y'))} \pi(z)}\right),
\end{align*}
 which is finite by \eqref{eq:by the previous cor} (and we remind that $t(x) = t(y)$ for any $x \in B_y$.  Hence, \eqref{eq:sum_condition} holds.

Finally, note that \eqref{eq:activepathcond} holds due to the existence of $c_{\text{min}}$ in the statement of Corollary \ref{cor:relaxed}.  Hence, the proof is complete.
\end{proof}

\begin{example}
    Let $X(t)=(X_1(t),X_2(t),X_3(t))$ be a Markov chain modeling the number of customers in queues $A$, $B$, and $C$, respectively. We assume that the customers who have been served at queue $A$ immediately move to $B$, and then move to $C$ when the service at $B$ is completed. Once a customer at queue $C$ is served, the customer leaves the queuing system. The service time at each queue $A, B$ and $C$ is exponentially distributed with the rates $\lambda_A$, $\lambda_B$, and $\lambda_C$, respectively. The arrival time at queue $A$ is also exponentially distributed with the rate $\lambda_0$. We represent the system pictorially via the image below.
    \begin{align*}
        \xrightarrow{\quad \lambda_0 \quad }\boxed{A} \xrightarrow{\quad  \lambda_A \quad } \boxed{B} \xrightarrow{\quad  \lambda_B \quad }  \boxed{C} \xrightarrow{\quad  \lambda_C  \quad }
    \end{align*}

We further assume that the number of servers on each queue is infinite (i.e.~the system is an M/M/$\infty$ system). Hence, the rate of service at each queue is proportional to the current number of customers; this implies that the transition rate  from queue $A$ to $B$, $B$ to $C$, and $C$ to outside the system at time $t$ is $\lambda_A X_1(t), \lambda_B X_2(t)$, and $\lambda_C X_3(t)$, respectively.  By well known results (see one of \cite{Whittle86, AndProdForm,Kelly1979}) the stationary distribution for this model is a product  of Poissons  
\begin{align*}    \pi(x)= e^{-(\frac{\lambda_0}{\lambda_A} + \frac{\lambda_A}{\lambda_B}+\frac{\lambda_{B}}{\lambda_C})}\frac{(\lambda_0/\lambda_A)^{x_1}}{x_1!} \cdot \frac{(\lambda_A/\lambda_B)^{x_2}}{x_2!} \cdot \frac{(\lambda_B/\lambda_C)^{x_3}}{x_3!}, \quad x \in \Z^d_{\ge 0}. 
\end{align*}  
Hence,  condition 1 in Corollary \ref{cor:relaxed} holds.
Furthermore, if we let $J_0=\{1\}, J_1=\{2\}$ and $J_2=\{3\}$, then conditions 2 and 3 in Corollary \ref{cor:relaxed} also hold. 
Consequently,  by Corollary \ref{cor:relaxed}, the conditions of Assumption \ref{assump:main} hold. So, by Theorem \ref{thm:main for poincare} the process $X$ is exponentially ergodic and the mixing time $\tau^x_\varepsilon$ is bounded above by $C|\ln(\pi(x))|$ for some constant $C>0$.
\hfill $\triangle$
 \end{example}

\subsection{Key Examples}\label{subsec:canonical}

In this section, we analyze two examples from our motivating class of models: stochastic reaction networks (which will be formally introduced in Section \ref{sec:modelsandresults}).  The first example, Example \ref{ex:key example}, demonstrates the necessity of our new theory to handle our models of interest. In particular, we show that an existing geometric method (related to the ``canonical path method'') is not powerful enough to prove exponential ergodicity for the provided model, but our theory--specifically Corollary \ref{cor:relaxed}--is able to prove that the spectral gap is strictly positive.
In our second example, Example \ref{ex:non expo}, we provide a time-reversible stochastic reaction network (which is also detailed-balanced in the sense of \cite{HornJack72}) and prove that it is \textit{not} exponentially ergodic.  This example demonstrates the necessity of our assumption 2 in Corollary \ref{cor:main} and assumptions 2 and 3 in Corollary \ref{cor:relaxed} (that transitions of the form $\pm e_i$ have positive rates under certain conditions).  Moreover, we believe this to be the first detailed-balanced model (and, hence, the first complex-balanced model, see Definition \ref{def:CB} and Theorem \ref{thm:def0}) in the stochastic reaction network literature to be proven to \textit{not} be exponentially ergodic.

Before moving to Example \ref{ex:key example}, we  briefly discuss some existing theory pertaining to the \textit{canonical path method} (see for instance \cite[Chapter 3]{saloff1997lectures} or \cite[Section 3.3]{berestycki2009eight}), which was the motivating theory for the present work.
Consider a positive recurrent continuous-time Markov chain $X$ with irreducible state space $\mathbb{S} \subset \Z^d$ and stationary distribution $\pi$.  We again denote by $q(z,w)$ the transition rate from state $z$ to state $w$. We recall that we  denote  a directed edge between $z$ and $w$ by $(z,w)$, and we write $(z,w) \in \gamma(x,x')$ if $(z,w)$ is an edge in the path $\gamma(x,x')$.  
Similarly to as we did in Assumption \ref{assump:main}, we let
\begin{align}\label{eq:654433578}
\Gamma =  \{\gamma(x,x') \ \text{ or }\  \gamma(x',x) \text{ but not both} : x, x'\in \mathbb{S} \text{ with } x \ne x' \}
\end{align}
that contains precisely one active, directed path for each pair of  states.   
We then define
the so-called ``congestion ratio", 
\begin{align}\label{eq:canonical}
 C_{\rm cr}(\Gamma):=\sup_{(z,w)} \left ( \frac{1}{q(z,w)\pi(z)} \sum_{\substack{x,x' :\\ (z,w) \in \gamma(x,x')}} |\gamma(x,x')|\pi(x)\pi(x') \right ),
\end{align}
where the supremum is taken over all directed edges $(z,w)$ and the sum is over pairs of states, $x,x'\in \mathbb{S}$, for which $(z,w)$ is an edge in $\gamma(x,x')$. Note that if $(z,w)\in \gamma(x,x')$, then $q(z,w)>0$ because the path is active.    Note that, very loosely, we should expect $C_{\rm cr}(\Gamma)$ to be  large if there is an edge, $(z,w)$, which many paths go through.

We can now state the basic idea of the method. If there exists a set of paths $\Gamma$ for which $C_{\text{cr}}(\Gamma) < \infty$, then the spectral gap is lower bounded by $1/C_{\text{cr}}(\Gamma)$.  
Of course, if $C_{\text{cr}}(\Gamma)=\infty$ then there is no hope of using this method and it stands silent.

\begin{example}\label{ex:key example}
Consider a continuous-time Markov chain defined on $\mathbb Z^2_{\ge 0}$ with  transition rates given as follows: denoting the elementary basis vectors by $e_1=(1,0)^\top$ and $e_2=(0,1)^\top$, for each $x=(x_1,x_2)^\top \in\mathbb Z^2_{\ge 0}$
\begin{align*}
&q(x,x+e_1)=\kappa_1 x_2,\\
&q(x,x-e_1)=\kappa_2 x_1x_2,\\
&q(x,x+e_2)=\kappa_3,\\
&q(x,x-e_2)=\kappa_4 x_2,
\end{align*}
for some positive constants $\kappa_1, \kappa_2, \kappa_3, \kappa_4\in\R_{>0}$. In Example \ref{ex: key example of crn} of Section \ref{sec:modelsandresults} we will see that this model arises via consideration of the stochastic reaction network with associated graph
   \begin{equation*}
   \begin{tikzpicture}[baseline={(current bounding box.center)}]
   \node[state] (X1+X2) at (-2.7,0) {$X_1+X_2$};
   \node[state] (X2)  at (0,0) {$X_2$};
   \node[state] (0)  at (2,0) {$0$};
    \path[->]
     ([yshift=-0.3em]X2.west) edge[bend left] node[below] {$\kappa_1$} ([yshift=-0.3em]X1+X2.east)
     ([yshift=0.3em]X1+X2.east) edge[bend left] node {$\kappa_2$} ([yshift=0.3em]X2.west)
     ([yshift=-0.3em]0.west) edge[bend left] node[below] {$\kappa_3$} ([yshift=-0.3em]X2.east)
     ([yshift=0.3em]X2.east) edge[bend left] node {$\kappa_4$} ([yshift=0.3em]0.west);
   \end{tikzpicture}
\end{equation*}
% \begin{align*}
%     X_1+X_2 \xrightleftharpoons[1]{1} & \ \ X_2 \  \xrightleftharpoons[1]{1} \emptyset.
% \end{align*}
Biologically, this system would model the creation and destruction of a molecule/protein of type $X_2$, that acts as a catalyst for the creation and destruction of a molecule/protein of type $X_1$. $\Z^2_{\ge 0}$ is irreducible and by Theorem \ref{thm:def0} (originally found in \cite{AndProdForm}), the model has stationary distribution
\begin{align}\label{eq:stationary}
    \pi(x) = e^{-\kappa_1/\kappa_2-\kappa_3/\kappa_4} \cdot \frac{(\kappa_1/\kappa_2)^{x_1}}{x_1!}\cdot \frac{(\kappa_3/\kappa_4)^{x_2}}{x_2!}, \quad x\in \Z^2_{\ge 0}.
\end{align}
It is straightforward to check that this model is  time-reversible (and  detailed balanced in the sense of \cite{HornJack72}, which implies time-reversibility but is not equivalent to it \cite{joshi2015detailed, cappelletti2018graphically}). 

The Lyapunov function techniques developed in \cite{mixing_And_Kim} to demonstrate exponential ergodicity for stochastic reaction networks do not work in a straightforward manner for this model (these authors could not develop a suitable Foster-Lyapunov function).  Thus, we turn to spectral  methods.  We will show that the canonical path method, as discussed in this sub-section above, stands silent for this model, but that our new techniques are strong enough to prove the desired result.

To show that the canonical path method stands silent, it suffices to show that $C_{\rm cr}(\Gamma)=\infty$ for {\it any} choice of paths $\Gamma$.
Hence, we let $\Gamma$ be an arbitrary set of active paths of the form \eqref{eq:654433578}.  For each positive integer $n\ge1$ we let $z_n = (n,0)^\top$ and $w_n = (n,1)^\top$.
Finally, we let $y_0\in \Z^2_{\ge 0}$ be a fixed state.

Note that $\Gamma$ must contain an infinite number of active paths from a set $\{\gamma(z_n,y_0)\}_{n\ge 1}$ and/or an infinite number of active paths from a set $\{\gamma(y_0,z_n)\}_{n\ge 1}$. Also note that each such active path $\gamma(z_n,y_0)$ contains the directed edge $(z_n,w_n)$ whereas each active path $\gamma(y_0,z_n)$ contains the directed edge $(w_n,z_n)$.  This follows because the only transition  starting at  $z_n=(n,0)^\top$ ends in $w_n=(n,1)^\top$, and the only transition ending in $z_n=(n,0)^\top$ starts at $w_n = (n,1)^\top$.

 We first assume that $\Gamma$ contains an  infinite number of active paths from a set $\{\gamma(z_n,y_0)\}_{n \ge 1}$. Denoting these active paths by $\gamma(z_{n_k},y_0)$ with a sub-sequence $n_k$ such that $n_k\to \infty$, as $k\to \infty$, we have (by only considering the one particular edge $(z_{n_k},w_{n_k})$)
\begin{align*}
    C_{\rm cr} (\Gamma) &\ge \frac{1}{q(z_{n_k},w_{n_k})\pi(z_{n_k})}|\gamma(z_{n_k},y_0)|\pi(z_{n_k})\pi(y_0) \\
    &=\frac{1}{\kappa_3}|\gamma(z_{n_k},y_0)|\pi(y_0),
\end{align*}
where we use that $q(z_{n_k},w_{n_k}) = \kappa_3$ for each $n_k$.
The right-hand side necessarily goes to $\infty$ as $k \to \infty$, since $|\gamma(z_{n_k},y_0)|\to \infty$ and $\pi(y_0)$ is fixed. In the same way, we can also show $C_{\rm cr}(\Gamma)=\infty$ for the  case when $\Gamma$ contains  infinitely many active paths from the set $\{\gamma(y_0,z_n)\}_{n \ge 1}$.  Hence %\textit{for any choice of paths} $\Gamma$, 
we have that the canonical path method stands silent for this model.

 However, the main method we propose in this paper successfully shows exponential ergodicity of this model. First, condition 1 in Corollary \ref{cor:relaxed} clearly holds because of the form of the stationary distribution given in \eqref{eq:stationary}.   Also conditions 2 and 3 in Corollary \ref{cor:relaxed} hold with  $J_0=\{ 2\}, J_1=\{ 1\}$, and $N=1$. \hfill $\triangle$
\end{example}

 We turn to our next example, demonstrating the necessity of our key assumptions.

\begin{example}\label{ex:non expo}
    Consider a continuous-time Markov chain with the transition rates given as follows: for each $x=(x_1,x_2)^\top \in  \mathbb Z^2_{\ge 0}$ 
    \begin{align*}
        &q(x,x+e_1+e_2)=1,\\
        &q(x,x-e_1-e_2)=x_1x_2,\\
        &q(x,x+e_2)=x_2, \text{ and}\\
        &q(x,x-e_2)=x_2(x_2-1).
    \end{align*}
    Note that $\Z^d_{\ge 0}$ is irreducible.  This model arises as the  stochastic mass-action system of Section \ref{sec:modelsandresults} associated to the reaction graph 
   \begin{equation*}
   \begin{tikzpicture}[baseline={(current bounding box.center)}]
   \node[state] (X1+X2) at (2.3,0) {$X_1+X_2$};
   \node[state] (0)  at (0,0) {$0$};
   \node[state] (X2)  at (4.7,0) {$X_2$};
   \node[state] (2X2)  at (6.7,0) {$2X_2$};
    \path[->]
     ([yshift=-0.3em]X1+X2.west) edge[bend left] node[below] {$1$} ([yshift=-0.3em]0.east)
     ([yshift=0.3em]0.east) edge[bend left] node {$1$} ([yshift=0.3em]X1+X2.west)
     ([yshift=-0.3em]2X2.west) edge[bend left] node[below] {$1$} ([yshift=-0.3em]X2.east)
     ([yshift=0.3em]X2.east) edge[bend left] node {$1$} ([yshift=0.3em]2X2.west);
   \end{tikzpicture}
\end{equation*}
   % \begin{align*}
   %      \emptyset \xrightleftharpoons[1]{1} X_1+X_2, \quad X_2 \xrightleftharpoons[1]{1} 2X_2.
   %  \end{align*}
     Theorem \ref{thm:def0} implies that the stationary distribution of this model is
\begin{align}\label{eq:stationary2nd}
    \pi(x) = e^{-2} \frac{1}{x_1!}\cdot \frac1{x_2!}, \quad x\in \Z^2_{\ge 0}.
\end{align}

It is straightforward to check that this  Markov chain is  time-reversible. To do so, one just needs to verify the following two equalities for all $x \in \Z^2_{\ge 0}$
\begin{align*}
    \pi(x)q(x,x+e_1+e_2) &= \pi(x+e_1+e_2)q(x+e_1+e_2,x),\\
    \pi(x)q(x,x+e_2) &= \pi(x+e_2)q(x+e_2,x)
\end{align*}
Verification is straightforward, and is left to the reader. It is further not complicated (by using techniques of reaction network theory such as deficiency and the tree-like structure of the network \cite{feliu201868,dickenstein2011far,muller2020detailed}) to show that the model is detailed balanced in the sense of \cite{HornJack72}, which implies time-reversibility but is not equivalent to it \cite{joshi2015detailed, cappelletti2018graphically}.  By Theorem \ref{thm:adlkjfkd}, it is enough to demonstrate that $\text{gap}(\EE) =0$ to prove the model is not $L^2$ exponentially ergodic. (Moreover, because the model is time-reversible, even the weaker form of exponential convergence given by (iii) in Theorem \ref{thm:adlkjfkd} does not hold.) Hence, by \eqref{eq:2222NEW}, it is sufficient to show that 
\begin{align}\label{eq:2222}
     \frac12 \inf\left\{ \sum_{x,y\in \Z^2_{\ge 0}} q(x,y) [ f(y) - f(x)]^2 \pi(x) : f \in \mathcal{B}_c,\ \pi(f) = 0,\ \pi(f^2) = 1\right\}=0,
\end{align}
where, as always, $\mathcal{B}_c$ is the set of functions with bounded support.

Let 
\[
    f_n(x) = c_nI_n(x) - d_n,
\]
where $I_n$ is the indicator function on $\{(n,0),(n+1,1)\}$ and $c_n$ and $d_n$ will be chosen so that $\pi(f_n) = 0$ and $\pi(f_n^2)=1$.  
It is straightforward to check that these conditions imply
\begin{align*}
    c_n^2 = \frac{1}{\pi(n,0)+\pi(n+1,1) - (\pi(n,0)+\pi(n+1,1))^2}.
\end{align*}
From \eqref{eq:stationary2nd}, the dominant term in the denominator is $\pi(n,0) = e^{-2}\frac1{n!}$.  Hence, we conclude that
\[
c_n^2 \approx e^2 \cdot n!. 
\]
Now we compute the term in \eqref{eq:2222} with $f_n$. We just have to consider those transitions taking us in to and out of the set $\{(n,0),(n+1,1)\}$. We have (the first two are the only possible transitions out of the set and the next two are the two transitions that can take you in):
\begin{align}
\begin{split}\label{eq:9u687656785}
    q&((n+1,1),(n+2,2)) [f_n(n+2,2)-f_n(n+1,1)]^2\pi(n+1,1)\\
    &+ q((n+1,1),(n+1,2)) [ f_n(n+1,2)-f_n(n+1,1)]^2 \pi(n+1,1)\\
    &+ q((n+2,2),(n+1,1)) [ f_n(n+1,1)-f_n(n+2,2)]^2\pi(n+2,2)\\
    &+q((n+1,2),(n+1,1))[ f_n(n+1,1)-f_n(n+1,2)]^2 \pi(n+1,2)\\
    % =&\ 1\cdot [-d_n -(c_n-d_n)]^2\pi(n+1,1)\\
    % &+ 1[ -d_n-(c_n-d_n)]^2 \pi(n+1,1)\\
    % &+ 2(n+2)   [(c_n-d_n)-(-d_n)]^2\pi(n+2,2)\\
    % &+2 [ (c_n-d_n)- (-d_n)]^2 \pi(n+1,2)\\
     =&\ c_n^2\left[ 2 \pi(n+1,1)+ 2(n+2)   \pi(n+2,2)+2 \pi(n+1,2)\right].
     \end{split}
\end{align}
Thus, combining $c_n^2 \approx e^2 n!$ with \eqref{eq:stationary2nd}, we have that the term above satisfies
\begin{align*}
    &\approx 2 \cdot n! \frac{1}{(n+1)!} + 2(n+2) \cdot n! \frac{1}{2(n+2)!} + 2 \cdot n! \frac{1}{2(n+1)!},
\end{align*}
which goes to zero, as $n\to \infty$.  Hence,  the infimum of \eqref{eq:2222} is zero.  Thus, for this particular model, $\text{gap}(\EE) = 0$ and it is not exponentially ergodic.

Now we can ask: what would have been different in the above calculation if we further allow transitions of the form $\pm e_i$ (i.e., what if the model satisfied the assumptions of our Corollaries \ref{cor:main} and \ref{cor:relaxed})? Because of the presence of a rate for $+e_1$ that is lower bounded (by, say, $\lambda >0$), we would get the following term added to \eqref{eq:9u687656785} 
\begin{align*}
    &q((n,0),(n+1,0)) [ f_n(n+1,0)-f_n(n,0)]^2 \pi(n,0)\ge \lambda c_n^2 \pi(n,0)\approx \lambda \cdot n! \cdot \frac1{n!} = 1.
\end{align*}
Hence, we could not have concluded that the gap was zero in this new scenario. 
Of course, in this new scenario we \textit{know} the gap is strictly positive by our Corollaries \ref{cor:main} and \ref{cor:relaxed}.
    \hfill $\triangle$
\end{example}

\section{Applications to stochastic reaction networks}
\label{sec:modelsandresults}

 In this section we explore the applicability of our results to a wide class of continuous-time Markov chains used heavily in biochemistry, ecology, and epidemiology. The models are referred to as \emph{stochastic reaction networks}. We will provide the necessary definition of the models under consideration in Section~\ref{sec:basic}, then examine a specific example, and finally prove exponential ergodicity for large families of stochastic reaction networks that are of particular interest to the community.

\subsection{Reaction networks and their associated stochastic model}
\label{sec:basic}

We give a minimal introduction to reaction networks and their associated dynamical systems.  For a more thorough introduction, see \cite{AK2015}.
We begin with the definition of a reaction network.  

\begin{defn}\label{def:21}
\emph{A  reaction network} is given by a triple of finite sets $(\Sp,\C,\Re)$ where:
\begin{enumerate}
\item \emph{The species set} $\Sp=\{X_1,X_2,\cdots,X_d\}$ contains the species of the reaction network.
\item \emph{The reaction set} $\Re=\{R_1,R_2,\cdots,R_r\}$ consists of ordered pairs $(y,y') \in \Re$, with $y\ne y'$, where 
\begin{align}\label{complex}
y=\sum_{i=1}^d y_iX_i \hspace{0.4cm} \textrm{and} \hspace{0.4cm}
y'=\sum_{i=1}^d y'_iX_i
\end{align}
and where the values $y_i,y'_i \in \mathbb{Z}_{\ge 0}$ are the \emph{stoichiometric coefficients}. We will  write reactions $(y,y')$ as $y\rightarrow y'$, and refer to $y$ as the \textit{source complex} and $y'$ as the \textit{product complex}.
\item \emph{The complex set} $\C$ consists of the linear combinations of the species in (\ref{complex}). Specifically, 
$\C = \{y\ : \ y\rightarrow y' \in \Re\} \cup \{y' \ : \ y\rightarrow y' \in \Re\}$.
\hfill $\triangle$
\end{enumerate}
\end{defn}

Note that when we are working in a general setting we are denoting our species via $X_1,\dots, X_d$.  However, when a specific model is under consideration, such as that found in Figure \ref{figure:EnvZ_model1}, more suggestive notation is often utilized (for example, in that model $X_p$ and $Y_p$ are the ``p''hosporolated versions of certain proteins).

Depending on the context, $y$ can denote either the linear combination in \eqref{complex} or the vector whose $i$-th component is $y_i$,  i.e.~$y=(y_1,y_2,\cdots,y_d)^\top \in \mathbb{Z}^d_{\ge 0}$. 
For example, when $\Sp=\{X_1,X_2,\dots,X_d\}$, $y=2X_1+X_2$ is associated with $(2,1,0,0,\dots,0)^\top \in \Z^d_{\ge 0}$. 
When $y=(0,0,\dots,0)^\top$, we denote the complex by $\emptyset$. This complex is used for the death of species, such as $X_1 \to \emptyset$, or the production of species from outside of the system, such as $\emptyset \to X_1$.

The triple of sets found in Definition \ref{def:21} is most often  illustrated with a \textit{reaction graph} in which the nodes are the complexes, each complex is written exactly one time (even if it appears in more than one reaction), and the directed edges are given by the reactions.

\begin{example}
The  reaction graph
\[
	S+E \rightleftarrows SE \rightarrow E+P,
\]
has $\Sp = \{S,E,SE,P\}$, $\C=\{S+E,SE,E+P\}$ and $\Re=\{S+E\rightarrow SE, SE\rightarrow S+E,SE\rightarrow E+P\}$. This is a standard model for substrate-enzyme kinetics. \hfill $\triangle$
\end{example}

\subsubsection{Stochastic model}

Given a reaction network $(\Sp,\C,\Re)$, a \emph{(stochastic) kinetics} is an assignment of a rate or \textit{intensity} function $\lambda_{y\to y'}:\Z_{\geq0}^d\to \R_{\geq0}$ to each reaction $y\to y'\in \Re$. We then call $(\Sp,\C,\Re, \{\lambda_{y\to y'}\})$ a \emph{stochastic reaction system}.    The change in  species counts is modeled by means of a continuous-time Markov chain with state space $\Z_{\geq0}^d$, whose transition rates are given by
\begin{align*}
  q(x,x')=\sum_{\substack{y\to y'\in\Re\\ y'-y=x'-x}}\lambda_{y\to y'}(x),
\end{align*}
where the sum is over those reactions whose occurrence causes a net change that is precisely $x'-x$. 
In case of an explosion occurring at a finite-time $T_\infty$, we consider  $X(t)=\Delta$ for any $t\geq T_\infty$, where $\Delta$ is a cemetery state not contained in $\Z_{\geq 0}^d$ \cite{NorrisMC97}.  The infinitesimal generator $\mathcal{A}$ of the associated Markov process acts on functions via the operation
\begin{align}\label{gen5}
 \mathcal{A}f(x) &=\sum_{x'}q(x,x')(f(x')-f(x)) \notag\\
 =& \sum_{y \rightarrow y' \in \Re} \lambda_{y\rightarrow y'}(x)(f(x+y'-y)-f(x)),
\end{align}
for any function $f$ in its domain  satisfying $f(\Delta)=0$.

A popular choice of stochastic kinetics is given by  \emph{(stochastic) mass-action kinetics}, where for any reaction $y\to y'\in\Re$
\begin{align}\label{mass}
\lambda_{y\to y'}(x)=\kappa_{y\to y'} \prod_{i=1}^d \frac{x_i!}{(x_i-y_i)!} \mathbbm{1}_{\{x_i \ge y_i\}},
\end{align}
for reaction constant $\kappa_{y\to y'}$. We denote $\K=\{\kappa_{y\to y'}\}$.   When the intensity functions are given by mass-action kinetics  we write the stochastic system as $(\Sp,\C,\Re,\K)$.

Note that the first three conditions of Assumption \ref{intcnd} automatically hold for mass-action systems. However, the fourth condition (non-explosivity) may not.

\subsection{Exponential ergodicity for stochastic reaction systems}

We start this section with an example on how to use our results in the context of stochastic reaction networks. We then proceed to state general results pertaining models related to complex-balancing (but not necessarily complex-balanced).

\begin{example}\label{ex:auto}
   Consider the following mass-action system, with the rate constants written next to the associated reaction:
   \begin{equation*}
   \begin{tikzpicture}[baseline={(current bounding box.center)}]
   \node[state] (X1) at (-2,0) {$X_1$};
   \node[state] (0)  at (0,0) {$0$};
   \node[state] (X2)  at (2,0) {$X_2$};
    \node[state] (X1+X2)  at (6,0) {$X_1+X_2$};
    \node[state] (2X1)  at (4,0) {$2X_1$};
    \node[state] (2X2)  at (8,0) {$2X_2$};
    \path[->]
     (0) edge[bend left] node[below] {$\kappa_1$} (X1)
     (X1) edge[bend left] node {$\delta$} (0)
     (0) edge[bend left] node {$\kappa_2$} (X2)
     (X2) edge[bend left] node[below] {$\delta$} (0)
     (X1+X2) edge node {$\rho$} (2X1)
     (X1+X2) edge node {$\rho$} (2X2);
   \end{tikzpicture}
\end{equation*}
This is model where particles of both types promote their own creations via the auto-catalytic reactions $X_1+X_2\to 2X_1$ and $X_1+X_2\to 2X_2$. Auto-catalytic models have been extensively studied in biochemistry and related mathematical questions have been popularised in \cite{TK2001}. Here we assume that the degradation rates of both types of particles is the same, which is not unrealistic in many concrete examples where degradation is due to homogeneous dilution. We further assume that the two molecules are equally likely to be produced via the auto-catalytic mechanism. The above example has been studied in \cite{bibbona2020stationary}, where it is proved that the model is positive recurrent and the stationary distribution is given by
\begin{equation*}
    \pi(x)=\frac{M}{x_1!x_2!}\frac{\Gamma(x_1+\gamma_1)\Gamma(x_2+\gamma_2)}{\Gamma(x_1+x_2+\gamma_1+\gamma_2)}\left(\frac{\kappa_1+\kappa_2}{\delta}\right)^{x_1+x_2},
\end{equation*}
where $\Gamma$ denotes the Gamma function,
\begin{equation*}
\gamma_1=\frac{\delta\kappa_1}{\rho(\kappa_1+\kappa_2)},\quad\gamma_2=\frac{\delta\kappa_2}{\rho(\kappa_1+\kappa_2)},
\end{equation*}
and $M$ is a normalizing constant that can be explicitly calculated as
\begin{equation*}
    M=\frac{\Gamma(\gamma_1+\gamma_2)}{\Gamma(\gamma_1)\Gamma(\gamma_2)}e^{-\frac{\kappa_1+\kappa_2}{\delta}}.
\end{equation*}
Note that in this case, for each $x\in\Z^2_{\geq0}$ with $x_1\geq1$ we have
\begin{align*}
 \frac{\pi(x)}{\pi(x-e_1)}&=\frac{1}{x_1}\frac{x_1+\gamma_1-1}{x_1+x_2+\gamma_1+\gamma_2-1}\frac{\kappa_1+\kappa_2}{\delta} \leq\frac{1}{x_1}\frac{\kappa_1+\kappa_2}{\delta}.
\end{align*}
In particular, if $\sqrt{x_1}\geq (\kappa_1+\kappa_2)/\delta$ then
\begin{equation*}
    \frac{\pi(x)}{\pi(x-e_1)}\leq \frac{1}{\sqrt{x_1}}.
\end{equation*}
Similarly, if $x_2\geq \min\{1, (\kappa_1+\kappa_2)^2/\delta^2\}$ we have
\begin{equation*}
    \frac{\pi(x)}{\pi(x-e_2)}\leq \frac{1}{\sqrt{x_2}}.
\end{equation*}
Hence, by Corollary~\ref{cor:main} we can conclude that the model is exponentially ergodic, the second condition of Corollary~\ref{cor:main} being guaranteed by the presence of the reactions $X_1\rightleftharpoons 0 \rightleftharpoons X_2$.\hfill $\triangle$
\end{example}

Our results can potentially apply to all models whose stationary distribution is known to a certain extent, as in Example~\ref{ex:auto}, provided that the assumptions of Corollary~\ref{cor:main} or the more general Corollary~\ref{cor:relaxed} are satisfied. Much effort has been devoted to calculating the exact form of the stationary distribution for families of stochastic reaction networks, a classical example being \cite{AndProdForm}. The results of \cite{AndProdForm} have been extended in \cite{anderson2016product}. In \cite{hong2021derivation}, Theorem~\ref{thm:def0} stated below has been extended and systematically utilized to calculate the stationary distributions for a variety of models, notably auto-catalytic models, and the computational package CASTANET is developed. Further characterizations of the stationary distributions of auto-catalytic networks are given in \cite{hoessly2019stationary}, and even further analysis on the form of the stationary distributions for families of stochastic reaction systems is in the recent paper \cite{hoessly2021stationary}.
The main scope of this section, however, is not to investigate the full applicability of our results in the setting of stochastic reaction networks, but to merely suggest examples of applications. Hence, for the sake of simplicity, we do not state here the results obtained in \cite{anderson2016product, hong2021derivation, hoessly2019stationary, hoessly2021stationary}, and limit our applications to the classical Theorem~\ref{thm:def0}, stated below, which is a slightly stronger version of the classical result contained in \cite{AndProdForm}.

\subsubsection{Reaction networks related to complex-balancing}

A special case of particular relevance in reaction network theory is given by \emph{complex-balanced mass-action systems}: these are mass-action systems $(\Sp,\C,\Re,\K)$ for which there exists a positive vector $c\in \R^d_{>0}$ satisfying
\begin{equation*}%\label{eq:cb}
    \sum_{\substack{y'\in\C:\\ y \to y' \in \Re}} \kappa_{y\to y'}\prod_{i=1}^d c_i^{y_i} = \sum_{\substack{y'\in\C:\\ y' \to y \in \Re}} \kappa_{y'\to y}\prod_{i=1}^d c_i^{y'_i}, 
\end{equation*}
for each fixed complex $y\in\C$. In this case, the vector $c$ is termed a \emph{complex-balanced equilibrium}. The study of complex-balanced networks dates back to \cite{Feinberg72,HornJack72}, where necessary and sufficient conditions for its existence are given, and has an important role in the development of reaction network theory. We give a minimal introduction in Section~\ref{sec:det}, where we explain why the vector $c$ is called an \emph{equilibrium}.

We will apply our theory to a class of stochastic reaction systems that are related to complex-balancing, but are not necessarily complex-balanced themselves. To this aim, the only result we need from the literature is the following one, which is proven in \cite{AT2019} and extends previous work done in \cite{AndProdForm, anderson2018non, CW2016}.

\begin{thm} \label{thm:def0} Let $(\Sp,\C,\Re,\K)$ be a mass-action system with a complex-balanced equilibrium $c \in \R^d_{> 0}$. Consider the stochastic process $\{X(t),t\in\R_{>0}\}$ given by the stochastic reaction system $(\Sp,\C,\Re,\{\lambda_{y\to y'}\})$ where the rate functions are of the form
\begin{equation}\label{eq:theta_def}
 \lambda_{y\to y'}(x)=\kappa_{y\to y'}\prod_{i=1}^d\prod_{j=0}^{y_i-1}\theta_i(x_i-j)
\end{equation}
for the same rate constants $\kappa_{y\to y'}$ as in $(\Sp,\C,\Re,\K)$ and some functions $\theta_i:\mathbb{Z}\to\R_{\geq0}$ satisfying
\begin{align}\label{eq:theta=0}
 \theta_i(n)&=0\quad\text{if and only if }n\leq 0\\
 \label{eq:theta_infty}
 \lim_{j\to\infty}\theta_i(j)&=\infty.
\end{align}
Then, $\{X(t), t\in\R_{>0}\}$ is non-explosive and admits the stationary distribution
\begin{equation}\label{eq:stat}
    \pi(x)=M\prod_{i = 1}^d \frac{c_i^{x_i}}{\prod_{j=1}^{x_i}\theta_i(j)}, \quad \text{for each $x\in \Z_{\ge0}^d$},
\end{equation}
where $M$ is a positive normalizing constant and the empty product is to be considered equal to 1.
\end{thm}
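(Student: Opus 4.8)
The plan is to verify directly that the proposed measure $\pi$ is a normalizable probability distribution that solves the global balance (stationarity) equations of the form \eqref{eq:equilibrium}, and to establish non-explosivity separately; the two together yield that $\pi$ is a stationary distribution (and the unique one whenever the chain is irreducible). First I would record the single algebraic identity that drives everything. Writing $c^y := \prod_{i=1}^d c_i^{y_i}$, the claim is that for every reaction $y\to y'\in\Re$ and every $x\in\Z^d_{\ge0}$,
\[
\pi(x)\,\lambda_{y\to y'}(x) = \kappa_{y\to y'}\,c^y\,\pi(x-y).
\]
This is checked by substituting the definitions \eqref{eq:theta_def} and \eqref{eq:stat}: the factor $\prod_{i}\prod_{j=0}^{y_i-1}\theta_i(x_i-j)$ cancels the top $y_i$ factors of the denominator $\prod_{j=1}^{x_i}\theta_i(j)$ in $\pi(x)$, leaving $\prod_{j=1}^{x_i-y_i}\theta_i(j)$, while peeling $c_i^{y_i}$ off $c_i^{x_i}$ produces exactly the shift to $\pi(x-y)$. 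The boundary case $x_i<y_i$ is automatic: then the product on the left contains the factor $\theta_i(x_i-y_i+1)$ whose argument is $\le0$ and which vanishes by \eqref{eq:theta=0}, while $\pi(x-y)=0$ because $x-y\notin\Z^d_{\ge0}$, so both sides are $0$.

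The conceptual heart is then to feed this identity into the balance equation. The in-flow into $x$ from reaction $y\to y'$ is $\pi(x-y'+y)\,\lambda_{y\to y'}(x-y'+y)$, which by the identity (applied at the state $x-y'+y$) equals $\kappa_{y\to y'}\,c^y\,\pi(x-y')$, whereas the out-flow term is $\pi(x)\,\lambda_{y\to y'}(x)=\kappa_{y\to y'}\,c^y\,\pi(x-y)$. Grouping the out-flow sum by source complex and the in-flow sum by product complex, the stationarity condition \eqref{eq:equilibrium} becomes
\[
\sum_{\eta\in\C}\pi(x-\eta)\left(c^\eta\sum_{y':\,\eta\to y'\in\Re}\kappa_{\eta\to y'}\right)
=\sum_{\eta\in\C}\pi(x-\eta)\left(\sum_{y:\,y\to\eta\in\Re}\kappa_{y\to\eta}\,c^y\right).
\]
Since $c$ is a complex-balanced equilibrium, the coefficient identity $c^\eta\sum_{y':\eta\to y'}\kappa_{\eta\to y'}=\sum_{y:y\to\eta}\kappa_{y\to\eta}\,c^y$ holds for each complex $\eta$; multiplying by $\pi(x-\eta)$ and summing over $\eta\in\C$ yields the displayed equality for every $x$. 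Thus the stochastic balance equation reduces, term-by-term in the complexes, to the deterministic complex-balance relation, and no cancellation across complexes is required.

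Two analytic points remain. Normalizability of $\pi$ follows from the growth hypothesis \eqref{eq:theta_infty}: the sum factorizes over coordinates, and for each $i$ the ratio of consecutive one-dimensional terms is $c_i/\theta_i(n+1)\to0$, so each factor converges by the ratio test and the constant $M$ is finite and positive. For non-explosivity I would avoid a Lyapunov construction and exploit the identity once more: summing it over $x$ gives $\sum_x\pi(x)\lambda_{y\to y'}(x)=\kappa_{y\to y'}\,c^y\sum_x\pi(x-y)\le\kappa_{y\to y'}\,c^y$, whence
\[
\sum_{x}\pi(x)\,q_x=\sum_{y\to y'\in\Re}\ \sum_x\pi(x)\,\lambda_{y\to y'}(x)\le\sum_{y\to y'\in\Re}\kappa_{y\to y'}\,c^y<\infty,
\]
because $\Re$ is finite. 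By the criterion recalled immediately after Assumption \ref{intcnd} (see \cite{Nonexplo}), a probability solution of the global balance equations with $\sum_x\pi(x)q_x<\infty$ is forced to be non-explosive with $\pi$ as stationary distribution. I expect the genuine subtleties to be bookkeeping rather than conceptual: keeping the boundary states $x\not\ge y$ consistent in the identity, and justifying the interchange of the (absolutely convergent, by the bound above) double sum over reactions and states. The real content is simply the observation that the identity linearizes the stochastic balance equation into the complex-balance condition.
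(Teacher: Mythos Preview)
Your argument is correct and is essentially the standard proof of this result as it appears in \cite{AndProdForm} (for mass-action kinetics) and \cite{AT2019} (for the general $\theta_i$ case): establish the telescoping identity $\pi(x)\lambda_{y\to y'}(x)=\kappa_{y\to y'}c^y\pi(x-y)$, regroup the master equation by complexes to reduce it to the deterministic complex-balance condition, check summability via the ratio test, and infer non-explosivity from $\sum_x\pi(x)q_x<\infty$. Note that the present paper does not itself prove Theorem~\ref{thm:def0} but simply quotes it from \cite{AT2019}, so there is no in-paper proof to compare against; your write-up is precisely the argument those references give. One small sharpening: in your non-explosivity step the inequality $\sum_x\pi(x-y)\le1$ is in fact an equality (substitute $z=x-y$ and use that $\pi$ vanishes off $\Z^d_{\ge0}$), so $\sum_x\pi(x)q_x=\sum_{y\to y'}\kappa_{y\to y'}c^y$ exactly.
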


\begin{rem}\label{rem:ma}
 Note that stochastic mass-action kinetics satisfies \eqref{eq:theta_def}, \eqref{eq:theta=0}, and \eqref{eq:theta_infty} with the choice \begin{equation}\label{eq:mak}
 \theta_i(j)=\begin{cases}
 j&\text{if }j\geq1\\
 0&\text{otherwise}
 \end{cases}.
 \end{equation}
 In this case, \eqref{eq:stat} is a product-form Poisson distribution.
\end{rem}

% \begin{rem}
%  Theorem~\ref{thm:def0} has been extended in \cite{anderson2016product}. In \cite{hong2021derivation}, Theorem~\ref{thm:def0} has been extended and systematically utilized to calculate the stationary distributions in a variety of models, notably auto-catalytic models, and the computational package CASTANET is developed. Further characterisations of the stationary distributions of auto-catalytic networks are given in \cite{hoessly2019stationary}, and even further analysis on stationary distribution forms is in the recent paper \cite{hoessly2021stationary}. Our results can potentially apply to all models whose stationary distribution is known to a certain extent, provided that the assumptions of Corollary~\ref{cor:main} or the more general Corollary~\ref{cor:relaxed} are satisfied. However, the main scope of this section is not to investigate the full applicability of our results in the setting of stochastic reaction networks, but to merely suggest examples of applications. Hence, for the sake of simplicity, we do not state here the results obtained in \cite{anderson2016product, hong2021derivation, hoessly2019stationary, hoessly2021stationary}, and limit our applications to the classical Theorem~\ref{thm:def0}.
% \end{rem}

We can state the following results.

\begin{thm}\label{thm:cb_general_inout}
Let $(\Sp,\C,\Re,\K)$ be a mass-action system with a complex-balanced equilibrium $c \in \R^d_{> 0}$, such that
\begin{align*}
%\label{eq:all in and out}
    \{X_i \to 0 : X_i \in \Sp\} \bigcup   \{0 \to X_i: X_i \in \Sp\} \subseteq \Re.
    \end{align*}
Consider the stochastic process $\{X(t),t\in\R_{>0}\}$ given by the stochastic reaction system $(\Sp,\C,\Re,\{\lambda_{y\to y'}\})$ where the rate functions are of the form
\begin{equation*}
 \lambda_{y\to y'}(x)=\kappa_{y\to y'}\prod_{i=1}^d\prod_{j=0}^{y_i-1}\theta_i(x_i-j)
\end{equation*}
for the same rate constants $\kappa_{y\to y'}$ as in $(\Sp,\C,\Re,\K)$ and some functions $\theta_i:\mathbb{Z}\to\R_{\geq0}$ satisfying
\begin{equation*}
\begin{aligned}
 &\theta_i(n)=0&&\text{if }n\leq 0\\
 &\theta_i(n)\geq n^{\beta}&&\text{if }n> 0
 \end{aligned}
\end{equation*}
for some $\beta\in\R_{>0}$. Then, $\{X(t), t\in\R_{>0}\}$ is exponentially ergodic in $L^2$.
\end{thm}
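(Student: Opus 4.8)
The plan is to reduce everything to Corollary~\ref{cor:main} (with $\S = \Z^d_{\ge0}$), after which $L^2$ exponential ergodicity is immediate from that corollary together with Theorem~\ref{thm:main for poincare}. First I would invoke Theorem~\ref{thm:def0} to pin down the stationary distribution. The hypotheses imposed here, namely $\theta_i(n)=0$ for $n\le 0$ and $\theta_i(n)\ge n^\beta$ for $n>0$, imply the conditions \eqref{eq:theta=0} and \eqref{eq:theta_infty} of that theorem: the bound $\theta_i(n)\ge n^\beta>0$ for $n>0$ supplies the ``if and only if'' in \eqref{eq:theta=0}, and $\theta_i(n)\ge n^\beta\to\infty$ supplies \eqref{eq:theta_infty}. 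Hence Theorem~\ref{thm:def0} applies, the process is non-explosive, and its stationary distribution is
\[
\pi(x)=M\prod_{i=1}^d \frac{c_i^{x_i}}{\prod_{j=1}^{x_i}\theta_i(j)}.
\]
Since $\pi$ is a genuine probability measure on the irreducible space $\Z^d_{\ge0}$, the chain is positive recurrent. The first three points of Assumption~\ref{intcnd} hold by the same reasoning as for mass-action systems (finitely many reactions, each with a finite, locally bounded intensity, so that $\sup_x q(x,z)$ is a maximum over finitely many net-change vectors), and non-explosivity gives point (iv). Thus Assumption~\ref{intcnd} is in force.

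Next I would verify condition~1 of Corollary~\ref{cor:main}. Because $\pi$ is of product form and $x$ and $x-e_i$ differ only in the $i$th coordinate, the ratio telescopes cleanly to
\[
\frac{\pi(x)}{\pi(x-e_i)}=\frac{c_i}{\theta_i(x_i)}\le \frac{c_i}{x_i^\beta},
\]
using $\theta_i(x_i)\ge x_i^\beta$. Taking $\alpha=\beta/2$ and choosing an integer $K$ with $K^{\beta/2}\ge \max_i c_i$ forces $c_i/x_i^\beta\le 1/x_i^{\beta/2}=1/x_i^\alpha$ whenever $x_i\ge K$, which is exactly condition~1, with a single exponent $\alpha>0$ valid for all coordinates simultaneously.

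Finally I would verify condition~2 using the openness hypothesis. The reaction $0\to X_i$ has source complex $\emptyset$, so its intensity \eqref{eq:theta_def} is an empty product times $\kappa_{0\to X_i}$, giving $q(x,x+e_i)\ge \kappa_{0\to X_i}>0$ for every $x\in\Z^d_{\ge0}$. The reaction $X_i\to 0$ has intensity $\kappa_{X_i\to 0}\,\theta_i(x_i)$, and for $x_i\ge 1$ we have $\theta_i(x_i)\ge x_i^\beta\ge 1$, so $q(x,x-e_i)\ge \kappa_{X_i\to 0}>0$. Setting $c_{\text{min}}=\min_i\min\{\kappa_{0\to X_i},\kappa_{X_i\to 0}\}>0$ establishes condition~2 (and, incidentally, the irreducibility of $\Z^d_{\ge0}$). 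With both conditions checked, Corollary~\ref{cor:main} yields Assumption~\ref{assump:main}, and Theorem~\ref{thm:main for poincare} then delivers the claimed $L^2$ exponential ergodicity.

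I expect no single deep obstacle, since the argument is fundamentally a verification of the corollary's hypotheses. The two points demanding genuine care are (a) confirming that the weakened assumptions on $\theta_i$ still entail those of Theorem~\ref{thm:def0}, so that the product-form stationary distribution and non-explosivity are actually available, and (b) the bookkeeping that converts the coordinatewise polynomial lower bound $\theta_i(n)\ge n^\beta$ into a \emph{uniform} super-exponential decay exponent $\alpha=\beta/2$ together with the uniform rate floor $c_{\text{min}}$, both holding across all $d$ coordinates at once.
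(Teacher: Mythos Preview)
Your proposal is correct and follows essentially the same route as the paper: invoke Theorem~\ref{thm:def0} to obtain non-explosivity and the product-form stationary distribution, then verify the two hypotheses of Corollary~\ref{cor:main} via the ratio $\pi(x)/\pi(x-e_i)=c_i/\theta_i(x_i)\le c_i/x_i^\beta$ and the inflow/outflow reactions. The only cosmetic difference is that the paper picks an arbitrary $0<\alpha<\beta$ whereas you pin down $\alpha=\beta/2$ explicitly; your additional care in checking Assumption~\ref{intcnd} and the hypotheses of Theorem~\ref{thm:def0} is fine and does not alter the argument.
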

\begin{proof}
 First of all, note that the presence of the reactions $0 \to X_i$ and $X_i\to 0$ for each $X_i\in\Sp$ guarantees the irreducibility of the state space $\Z^d_{\geq0}$. Theorem~\ref{thm:def0} implies the process $\{X(t), t\in\R_{>0}\}$ is non-explosive and has a unique stationary distribution $\pi$ satisfying
 \begin{equation*}
     \frac{\pi(x)}{\pi(x-e_i)}=\frac{c_i}{\theta_i(x_i)}\leq \frac{c_i}{x_i^\beta},
 \end{equation*}
 hence for any $0<\alpha<\beta$ and for any large enough $x_i$ we have
  \begin{equation*}
     \frac{\pi(x)}{\pi(x-e_i)}\leq \frac{1}{x_i^\alpha}.
 \end{equation*}
 The first condition of Corollary~\ref{cor:main} holds, and the second condition is satisfied by the presence of the reactions $0 \to X_i$ and $X_i\to 0$ for each $X_i\in\Sp$:
 \begin{equation*}
 \inf_{x\in\Z^d_{\geq0}}q(x,x+e_i)\geq \kappa_{0\to X_i}\quad\text{and}\quad\inf_{\substack{x\in\Z^d_{\geq0} \\ x_i\geq1}}q(x,x-e_i)\geq \inf_{x_i\geq1}\kappa_{X_i\to 0}\theta_i(x_i)=\kappa_{X_i\to 0}.
 \end{equation*}
 The proof is then concluded by the application of Corollary~\ref{cor:main}
\end{proof}

A natural corollary follows from the application of Theorem~\ref{thm:cb_general_inout} to the particular case of mass-action kinetics.

\begin{cor}\label{cor:cb_inout}
Let $(\Sp,\C,\Re,\K)$ be a mass-action system with a complex-balanced equilibrium $c \in \R^d_{> 0}$, such that
\begin{align*}%\label{eq:all in and out}
    \{X_i \to \emptyset : X_i \in \Sp\} \bigcup   \{\emptyset \to X_i: X_i \in \Sp\} \subseteq \Re.
    \end{align*}
Then, the associated process is exponentially ergodic in $L^2$.
Moreover, for any fixed $\varepsilon>0$ there exists $C_\varepsilon\in\R_{>0}$ such that $\tau_x^\varepsilon \le C_\varepsilon \|x\|\ln(\|x\|)$, for any $x\in\Z^d_{\geq0}\setminus\{0\}$.
\end{cor}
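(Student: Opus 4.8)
The plan is to obtain the two assertions separately, each as a short deduction from results already in hand. For the $L^2$ exponential ergodicity, I would observe that mass-action kinetics is precisely the special case of the rate form in Theorem~\ref{thm:cb_general_inout} obtained by taking $\theta_i(j)=j$ for $j\ge 1$ and $\theta_i(j)=0$ otherwise, as recorded in Remark~\ref{rem:ma}. These $\theta_i$ satisfy $\theta_i(n)=0$ for $n\le 0$ and $\theta_i(n)=n\ge n^{\beta}$ with $\beta=1$, so the hypotheses of Theorem~\ref{thm:cb_general_inout} hold verbatim. Since the inflow and outflow reactions $\emptyset\to X_i$ and $X_i\to\emptyset$ are present for every species, the state space $\Z^d_{\ge 0}$ is irreducible and the required uniform lower bounds on the $\pm e_i$ rates hold. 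Hence Theorem~\ref{thm:cb_general_inout} applies directly and yields the first claim with no further work.

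For the mixing-time bound I would invoke conclusion (ii) of Theorem~\ref{thm:main for poincare}, whose hypotheses are met because the proof of Theorem~\ref{thm:cb_general_inout} already verifies the conditions of Corollary~\ref{cor:main} (and hence Assumption~\ref{assump:main}). This produces a constant $C>0$, independent of $x$, with
\[
\tau_x^\varepsilon \le \frac{1}{2C}\left(|\ln(\pi(x))| + \left|\ln\tfrac{\varepsilon}{2}\right|\right).
\]
It therefore suffices to show $|\ln(\pi(x))| = O(\|x\|\ln\|x\|)$ as $\|x\|\to\infty$, with constants depending only on the network, and then absorb the $\varepsilon$-dependent additive constant into $C_\varepsilon$.

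The remaining estimate is a direct Stirling computation. By Theorem~\ref{thm:def0} with $\theta_i(j)=j$ the stationary distribution is the product-form Poisson $\pi(x)=M\prod_{i=1}^d c_i^{x_i}/x_i!$, so
\[
-\ln(\pi(x)) = -\ln M - \sum_{i=1}^d x_i\ln c_i + \sum_{i=1}^d \ln(x_i!).
\]
Using $\ln(x_i!) = x_i\ln x_i - x_i + O(\ln x_i)$ together with the bounds $x_i\le C_1\|x\|$ and $\sum_i x_i \le C_2\|x\|$ (valid for any fixed norm by equivalence of norms on $\R^d$), each term is controlled by a multiple of $\|x\|\ln\|x\| + \|x\| + 1$, giving $|\ln(\pi(x))| = O(\|x\|\ln\|x\|)$. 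Feeding this into the displayed mixing-time bound produces the constant $C_\varepsilon$ and the claimed estimate $\tau_x^\varepsilon \le C_\varepsilon\|x\|\ln\|x\|$.

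Because both parts reduce to previously established machinery, I do not anticipate a genuine obstacle; the work is essentially clerical. The only points requiring care are confirming that the verification of Corollary~\ref{cor:main}'s hypotheses carried out inside the proof of Theorem~\ref{thm:cb_general_inout} does license the use of Theorem~\ref{thm:main for poincare}(ii), and noting that the estimate captures the leading-order growth: for $\|x\|\ge 2$ one has $\|x\|\ln\|x\|>0$ and the displayed bound goes through, so the stated inequality should be understood as describing this asymptotic regime.
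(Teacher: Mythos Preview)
Your proposal is correct and follows essentially the same approach as the paper: apply Theorem~\ref{thm:cb_general_inout} with the mass-action choice $\theta_i(j)=j$ from Remark~\ref{rem:ma}, then derive the mixing-time bound from Theorem~\ref{thm:main for poincare}(ii) together with the product-form Poisson stationary distribution. The paper's proof is a two-sentence sketch that leaves the Stirling estimate on $|\ln\pi(x)|$ implicit, whereas you spell it out; your observation about the degenerate case $\|x\|=1$ is also apt and not addressed in the paper.
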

\begin{proof}
 The proof follows from direct application of Theorem~\ref{thm:cb_general_inout} to the specific choice of rate functions \eqref{eq:mak}
 as discussed in Remark~\ref{rem:ma}. The bound on the mixing time follows from the conclusions of Theorem~\ref{thm:main for poincare} and from the fact that $\pi$ is a product-form Poisson distribution.
\end{proof}

Before applying the more general Corollary~\ref{cor:relaxed} to stochastic reaction networks, we show examples of applications of Theorems~\ref{thm:cb_general_inout} and Corollary~\ref{cor:cb_inout}

\begin{example}\label{ex:open-cxb}
Consider the stochastic mass-action system $(\Sp,\C,\Re, \K)$ associated with the following reaction network, where the rate constants are written next to the associated reaction:
\begin{equation}\label{eq:ex_network}
    \begin{tikzpicture}[baseline={(current bounding box.center)}]
    \node[state] (X1) at (-2,1) {$X_1$};
    \node[state] (0)  at (0,0) {$0$};
    \node[state] (X2)  at (2,1) {$X_2$};
    \node[state] (A)  at (-2,-1) {$2X_1+X_2$};
    \node[state] (B)  at (2,-1) {$3X_1+2X_2$};
    \path[->]
     (0) edge[bend left] node {1} (X1)
     (X1) edge[bend left] node {1} (0)
     (0) edge[bend left] node {1} (X2)
     (X2) edge[bend left] node {1} (0)
     (A) edge node {1} (B)
     (0) edge node {1} (A)
     (B) edge node {1} (0);
   \end{tikzpicture}
\end{equation}
It is easy to check that $c = (1,1)$ is a complex-balanced equilibrium. Furthermore 
\begin{align*}
\{X_1\to \emptyset, X_2\to \emptyset\} \bigcup    \{\emptyset \to X_1, \emptyset \to X_2\} \subset \Re.
\end{align*}
Hence, by Corollary~\ref{cor:cb_inout} the model is exponentially ergodic in $L^2$ and for any $\varepsilon\in\R_{>0}$ we have $\tau_x^\varepsilon = O(\|x\|\ln(\|x\|))$. \hfill $\triangle$
\end{example}
\begin{example}
Consider a stochastic mass-action system $(\Sp,\C,\Re, \K)$ associated with the following reaction network:
\begin{equation*}
    \begin{tikzpicture}[baseline={(current bounding box.center)}]
    \node[state] (X1) at (2,0) {$X_2$};
    \node[state] (0)  at (0,0) {$0$};
    \node[state] (X2)  at (0,1.5) {$X_1$};
    \node[state] (2X1)  at (4,0) {$2X_2$};
    \path[->]
     (0) edge[bend left] node {} (X1)
     (X1) edge[bend left] node {} (0)
     (0) edge[bend left] node {} (X2)
     (X2) edge[bend left] node {} (0)
     (2X1) edge node {} (X1)
     (X2) edge node {} (X1);
   \end{tikzpicture}
\end{equation*}
The model describes coexisting individuals of two types ($X_1$ and $X_2$). The individuals of type $X_2$ fight against each other (modelled by $2X_2\to X_2$) and individuals of type $X_1$ naturally become individuals of type $X_2$. Such behaviour is similar, for example, to those of territorial fishes, where individuals of type $X_1$ can be interpreted as young offspring eventually becoming adult. We include inflows and outflows of both types of individuals, which makes sense if for example we study a portion of water that fishes can freely enter or exit.

Since the network is not strongly connected (or, in the language of reaction network theory, \emph{weakly reversible}), it is known that it cannot be complex-balanced for any choice of rate constants \cite{HornJack72, Horn72, Feinberg72}. However, the associated stochastic process has the same distribution as the one associated with the network 
\begin{equation}\label{eq:cb_version}
    \begin{tikzpicture}[baseline={(current bounding box.center)}]
    \node[state] (X1) at (2,0) {$X_2$};
    \node[state] (0)  at (0,0) {$0$};
    \node[state] (X2)  at (0,1.5) {$X_1$};
    \path[->]
     (0) edge[bend left] node {} (X1)
     (X1) edge[bend left] node {} (0)
     (0) edge[bend left] node {} (X2)
     (X2) edge[bend left] node {} (0)
     (X2) edge node {} (X1);
   \end{tikzpicture}
\end{equation}
with the following choice of kinetics:
\begin{equation*}
\arraycolsep=1.4pt
\begin{array}{rclcrcl}
    \lambda_{0\to X_1}(x)&=&\kappa_{0\to X_1}&\phantom{space}&
    \lambda_{0\to X_2}(x)&=&\kappa_{0\to X_2}\\
    \lambda_{X_1\to 0}(x)&=&\kappa_{X_1\to 0}\theta_1(x_1)&&
    \lambda_{X_2\to 0}(x)&=&\kappa_{X_2\to 0}\theta_2(x_2)\\
    \lambda_{X_1\to X_2}(x)&=&\kappa_{X_1\to X_2}\theta_1(x_1)&&
\end{array}
\end{equation*}
where
\begin{equation*}
    \theta_1(n)=\begin{cases}
    n &\text{if }n\geq0\\
    0 &\text{otherwise}
    \end{cases}
    \quad\text{and}\quad
    \theta_2(n)=\begin{cases}
    n+\frac{\kappa_{2X_2\to X_2}}{\kappa_{X_2\to 0}}n(n-1)&\text{if }n\geq0\\
    0 &\text{otherwise}
    \end{cases}.
\end{equation*}
By classical theory developed in \cite{HornJack72, Horn72, Feinberg72} we know that \eqref{eq:cb_version}, as a mass-action system, is complex-balanced for any choice of rate constants (it is weakly reversible and its deficiency is 0). Hence, we can apply Theorem~\ref{thm:cb_general_inout} and conclude that our original model is exponentially ergodic.\hfill$\triangle$
\end{example}
We now relax Theorem~\ref{thm:cb_general_inout} and Corollary~\ref{cor:cb_inout} to allow for the birth and death of certain species to be catalyzed by other species. The proofs are a direct application of Corollary~\ref{cor:relaxed}, exactly as the proofs of Theorem~\ref{thm:cb_general_inout} and Corollary~\ref{cor:cb_inout} were a direct application of Corollary~\ref{cor:main}, so we will omit them.
\begin{thm}\label{thm:cb_general_inout_relaxed}
        Let $(\Sp,\C,\Re,\K)$ be a mass-action system with a complex-balanced equilibrium $c \in \R^d_{> 0}$, such that:
    \begin{enumerate}
        \item there exists a non-empty subset $\Sp_0 \subset \Sp$ so that for each $i\in \Sp_0$ we have 
        \begin{align*}
         \{X_i \to \emptyset, \emptyset\to X_i\} \subseteq \Re;
        \end{align*}

        \item there exist disjoint subsets of $\Sp_1,\dots,\Sp_m\subset \Sp$ so that $\{\Sp_0,\Sp_1, \dots, \Sp_m\}$ is a partition of $\Sp$, such that for each $i \in J_\ell$ with $\ell \ge 1$, we have
    \begin{enumerate}
         \item[(i)] $N^+_{ji}X_j\to N^+_{ji}X_j + X_i $ for some $j\in \Sp_0 \cup\cdots \cup  \Sp_{\ell-1}$ with some $N^+_{ji}\in \mathbb Z_{\ge 0}$, and 
            \item[(ii)] $N^-_{ji}X_j+X_i\to N^-_{ji}X_j$ for some $j\in \Sp_0 \cup\cdots \cup \Sp_{\ell-1}$ with some $N^-_{ji}\in \mathbb Z_{\ge 0}$.
    \end{enumerate}
    \end{enumerate}
Consider the stochastic process $\{X(t),t\in\R_{>0}\}$ given by the stochastic reaction system $(\Sp,\C,\Re,\{\lambda_{y\to y'}\})$ where the rate functions are of the form
\begin{equation*}
 \lambda_{y\to y'}(x)=\kappa_{y\to y'}\prod_{i=1}^d\prod_{j=0}^{y_i-1}\theta_i(x_i-j)
\end{equation*}
for the same rate constants $\kappa_{y\to y'}$ as in $(\Sp,\C,\Re,\K)$ and some functions $\theta_i:\mathbb{Z}\to\R_{\geq0}$ satisfying
\begin{equation*}
\begin{aligned}
 &\theta_i(n)=0&&\text{if }n\leq 0\\
 &\theta_i(n)\geq n^{\beta}&&\text{if }n> 0
 \end{aligned}
\end{equation*}
for some $\beta\in\R_{>0}$. Then, $\{X(t), t\in\R_{>0}\}$ is exponentially ergodic in $L^2$.
\end{thm}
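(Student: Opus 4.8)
The plan is to verify, one by one, the three hypotheses of Corollary~\ref{cor:relaxed}, identifying each species subset $\Sp_\ell$ with the corresponding index set $J_\ell \subseteq \{1,\dots,d\}$; since the argument parallels that of Theorem~\ref{thm:cb_general_inout}, the only genuinely new work is checking the partitioned (catalytic) lower bounds on the transition rates. First I would invoke Theorem~\ref{thm:def0}: its hypotheses \eqref{eq:theta=0} and \eqref{eq:theta_infty} are implied by the assumed bounds ($\theta_i(n)=0$ for $n\le 0$ and $\theta_i(n)\ge n^\beta$ for $n>0$, the latter forcing $\theta_i(j)\to\infty$), so the process is non-explosive with the product-form stationary distribution \eqref{eq:stat}. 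Writing $\pi(x)=\prod_{i=1}^d \pi_i(x_i)$ with $\pi_i(x_i)\propto c_i^{x_i}/\prod_{j=1}^{x_i}\theta_i(j)$, the ratio $\pi_i(n)/\pi_i(n-1)=c_i/\theta_i(n)\le c_i/n^\beta$ is at most $1/n^\alpha$ for any fixed $\alpha\in(0,\beta)$ once $n$ exceeds a threshold $K$ depending only on $c_i,\alpha,\beta$. This is exactly condition~1 of Corollary~\ref{cor:relaxed}.

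For condition~2, I would take $J_0$ to be the (non-empty) index set of $\Sp_0$. For $i\in J_0$ the reaction $\emptyset\to X_i$ has the constant intensity $\kappa_{\emptyset\to X_i}$ (the empty product in \eqref{eq:theta_def} equals $1$), giving $q(x,x+e_i)\ge \kappa_{\emptyset\to X_i}$ for all $x$, while $X_i\to\emptyset$ contributes $\kappa_{X_i\to\emptyset}\,\theta_i(x_i)$, and since $\theta_i(x_i)\ge x_i^\beta\ge 1$ whenever $x_i\ge 1$ we obtain $q(x,x-e_i)\ge\kappa_{X_i\to\emptyset}$ on $\{x_i\ge1\}$. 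For condition~3, I would set $N$ to be the maximum of the catalytic coefficients $N^+_{ji},N^-_{ji}$ from hypothesis~2 (and at least $1$). Given $i\in J_\ell$ with $\ell\ge1$, the reaction $N^+_{ji}X_j\to N^+_{ji}X_j+X_i$ increases only the $i$th coordinate and has intensity $\kappa\prod_{l=0}^{N^+_{ji}-1}\theta_j(x_j-l)$; when $x_j\ge N\ge N^+_{ji}$ every factor satisfies $\theta_j(x_j-l)\ge(x_j-l)^\beta\ge 1$, so $q(x,x+e_i)$ is bounded below by the associated rate constant, which is condition~3(i). The outflow reaction $N^-_{ji}X_j+X_i\to N^-_{ji}X_j$ is handled identically, the additional factor $\theta_i(x_i)\ge1$ being available whenever $x_i\ge1$, giving condition~3(ii). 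Letting $c_{\text{min}}$ be the minimum of the finitely many rate constants produced above completes conditions~2 and~3.

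With all three hypotheses of Corollary~\ref{cor:relaxed} verified, that corollary ensures Assumption~\ref{assump:main} holds, whence Theorem~\ref{thm:main for poincare} delivers $L^2$ exponential ergodicity. The one point demanding real care---and the step I expect to be the main obstacle---is confirming that the generalized mass-action intensities of \eqref{eq:theta_def} are uniformly bounded below along the catalytic reactions: one must use that $j\ne i$ (which holds because the $\Sp_\ell$ are disjoint), that the number of $\theta_j$-factors equals $N^{\pm}_{ji}$, and that the threshold $N$ is large enough that no argument $x_j-l$ reaches zero, so each factor stays $\ge 1$ rather than vanishing. Everything else is a direct transcription of the proof of Theorem~\ref{thm:cb_general_inout}, with Corollary~\ref{cor:relaxed} in place of Corollary~\ref{cor:main}.
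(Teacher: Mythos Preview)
Your proposal is correct and follows precisely the approach the paper indicates: the paper omits the proof, stating only that it is ``a direct application of Corollary~\ref{cor:relaxed}, exactly as the proofs of Theorem~\ref{thm:cb_general_inout} and Corollary~\ref{cor:cb_inout} were a direct application of Corollary~\ref{cor:main}.'' Your verification of the three conditions of Corollary~\ref{cor:relaxed}---the product-form and ratio decay from Theorem~\ref{thm:def0}, the uniform lower bounds for $i\in J_0$ from the inflow/outflow reactions, and the catalytic lower bounds for $i\in J_\ell$ with a threshold $N$ taken as the maximum of the $N^{\pm}_{ji}$---is exactly what the omitted argument requires, and your care about the factors $\theta_j(x_j-l)$ not vanishing is the right point to watch.
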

\begin{cor}\label{cor:cb_inout_relaxed}
        Let $(\Sp,\C,\Re,\K)$ be a mass-action system with a complex-balanced equilibrium $c \in \R^d_{> 0}$, such that:
    \begin{enumerate}
        \item there exists a non-empty subset $\Sp_0 \subset \Sp$ so that for each $i\in \Sp_0$ we have 
        \begin{align*}
         \{X_i \to \emptyset, \emptyset\to X_i\} \subseteq \Re;
        \end{align*}

        \item there exist disjoint subsets of $\Sp_1,\dots,\Sp_m\subset \Sp$ so that $\{\Sp_0,\Sp_1, \dots, \Sp_m\}$ is a partition of $\Sp$, such that for each $i \in J_\ell$ with $\ell \ge 1$, we have
    \begin{enumerate}
         \item[(i)] $N^+_{ji}X_j\to N^+_{ji}X_j + X_i $ for some $j\in \Sp_0 \cup\cdots \cup  \Sp_{\ell-1}$ with some $N^+_{ji}\in \mathbb Z_{\ge 0}$, and 
            \item[(ii)] $N^-_{ji}X_j+X_i\to N^-_{ji}X_j$ for some $j\in \Sp_0 \cup\cdots \cup \Sp_{\ell-1}$ with some $N^-_{ji}\in \mathbb Z_{\ge 0}$.
    \end{enumerate}
    \end{enumerate}
Then, the associated process is exponentially ergodic in $L^2$.
Moreover, for any fixed $\varepsilon>0$ there exists $C_\varepsilon\in\R_{>0}$ such that $\tau_x^\varepsilon \le C_\varepsilon \|x\|\ln(\|x\|)$, for any $x\in\Z^d_{\geq0}\setminus\{0\}$.
\end{cor}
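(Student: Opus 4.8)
The plan is to obtain this as the mass-action specialization of Theorem~\ref{thm:cb_general_inout_relaxed}: for mass-action kinetics one has $\theta_i(j)=j$ (Remark~\ref{rem:ma}), so $\theta_i(n)\ge n^\beta$ holds with $\beta=1$ and the theorem applies verbatim to give $L^2$ exponential ergodicity. To keep the argument self-contained I would instead verify the three hypotheses of Corollary~\ref{cor:relaxed} directly, identifying each species $X_i$ with the coordinate index $i$ and taking $J_\ell$ to be the index set corresponding to $\Sp_\ell$. The first three points of Assumption~\ref{intcnd} are automatic for mass-action systems, while Theorem~\ref{thm:def0} (applied with $\theta_i(j)=j$) furnishes non-explosivity and a unique stationary distribution; together with irreducibility of $\Z^d_{\ge 0}$ coming from the reaction structure (exactly as Corollary~\ref{cor:relaxed} guarantees from its conditions~2 and 3), Assumption~\ref{intcnd} holds.

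First I would record the stationary distribution. Complex-balancedness plus mass-action gives, via Theorem~\ref{thm:def0}, the product-form Poisson $\pi(x)=\prod_{i=1}^d e^{-c_i}\,c_i^{x_i}/x_i!$. Writing $\pi_i(n)=e^{-c_i}c_i^n/n!$, we have $\pi_i(n)/\pi_i(n-1)=c_i/n\le 1/n^\alpha$ for any fixed $\alpha\in(0,1)$ once $n\ge K$ with $K$ large enough, so condition~1 of Corollary~\ref{cor:relaxed} holds. Condition~2 follows from the reactions $\emptyset\to X_i$ and $X_i\to\emptyset$ present for each $i\in\Sp_0$: the birth rate is the constant $\kappa_{\emptyset\to X_i}>0$ and the death rate is $\kappa_{X_i\to\emptyset}\,x_i\ge\kappa_{X_i\to\emptyset}$ whenever $x_i\ge 1$, so a common positive lower bound $c_{\text{min}}$ exists.

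The heart of the verification is condition~3, which encodes the catalytic/partition structure. For $i\in J_\ell$ with $\ell\ge 1$, hypothesis~2(i) supplies a reaction $N^+_{ji}X_j\to N^+_{ji}X_j+X_i$ with $j$ in an earlier block; its mass-action rate equals $\kappa\,\tfrac{x_j!}{(x_j-N^+_{ji})!}$, which is bounded below by a positive constant as soon as $x_j$ exceeds the threshold $N:=\max_{j,i}\{N^+_{ji},N^-_{ji}\}+1$. Similarly hypothesis~2(ii) supplies $N^-_{ji}X_j+X_i\to N^-_{ji}X_j$ with rate $\kappa\,\tfrac{x_j!}{(x_j-N^-_{ji})!}\,x_i$, bounded below when $x_j\ge N$ and $x_i\ge 1$. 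Taking $c_{\text{min}}$ to be the minimum of these finitely many positive constants establishes condition~3. I expect this to be the main obstacle: the falling-factorial form of the mass-action intensity is only uniformly positive once the catalyzing coordinate $x_j$ clears its stoichiometric coefficient, which forces $N$ to dominate every $N^\pm_{ji}$, and one must check that the monotone block ordering of the partition indeed makes each species outside $\Sp_0$ reachable via earlier blocks.

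With all hypotheses verified, Corollary~\ref{cor:relaxed} yields Assumption~\ref{assump:main}, and Theorem~\ref{thm:main for poincare} then delivers $\gap(\EE)\ge C>0$ (hence $L^2$ exponential ergodicity) together with the bound $\tau_\varepsilon^x\le\frac{1}{2C}\big(|\ln\pi(x)|+|\ln(\varepsilon/2)|\big)$. To finish the stated $O(\|x\|\ln\|x\|)$ estimate I would apply Stirling's formula to the product Poisson: $-\ln\pi(x)=\sum_{i=1}^d\big(c_i-x_i\ln c_i+\ln(x_i!)\big)=\sum_{i=1}^d\big(x_i\ln x_i+O(x_i)\big)=O(\|x\|\ln\|x\|)$, absorbing the $\varepsilon$-dependence into the constant $C_\varepsilon$.
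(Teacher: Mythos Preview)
Your proposal is correct and follows essentially the same route as the paper. The paper omits the proof entirely, stating only that it is ``a direct application of Corollary~\ref{cor:relaxed}, exactly as the proofs of Theorem~\ref{thm:cb_general_inout} and Corollary~\ref{cor:cb_inout} were a direct application of Corollary~\ref{cor:main}''; your verification of the three hypotheses of Corollary~\ref{cor:relaxed} (product Poisson form giving the ratio bound, constant birth and linear death rates for $i\in\Sp_0$, and falling-factorial rates for the catalyzed reactions bounded below once $x_j\ge N$), together with the Stirling estimate for $-\ln\pi(x)$, is precisely what that omitted argument would be.
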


\begin{example}\label{ex: key example of crn}
    The Markov chain given in Example \ref{ex:key example}  models the copy numbers of chemical species $X_1$ and $X_2$ involved in the reaction system 
   \begin{equation*}
   \begin{tikzpicture}[baseline={(current bounding box.center)}]
   \node[state] (X1+X2) at (-2.7,0) {$X_1+X_2$};
   \node[state] (X2)  at (0,0) {$X_2$};
   \node[state] (0)  at (2,0) {$0$};
    \path[->]
     ([yshift=-0.3em]X2.west) edge[bend left] node[below] {$\kappa_1$} ([yshift=-0.3em]X1+X2.east)
     ([yshift=0.3em]X1+X2.east) edge[bend left] node {$\kappa_2$} ([yshift=0.3em]X2.west)
     ([yshift=-0.3em]0.west) edge[bend left] node[below] {$\kappa_3$} ([yshift=-0.3em]X2.east)
     ([yshift=0.3em]X2.east) edge[bend left] node {$\kappa_4$} ([yshift=0.3em]0.west);
   \end{tikzpicture}
\end{equation*}
with mass action kinetics.
The model is complex-balanced for any choice of rate constants \cite{Horn72, Feinberg72, HornJack72}. Hence, the Markov chain is exponentially ergodic since the conditions in Corollary~\ref{cor:cb_inout_relaxed} hold. Note that $X_2$ catalyzes the birth and death of $X_1$.
\hfill $\triangle$
\end{example}

% \section*{Statements and Declarations}

% The authors declare there is no conflict of interest.

% \bibliographystyle{plain}
% \bibliography{res}

\appendix

\section{Technical results related to relevant operators}
 
In this brief section, we suppose that $X$ is an irreducible and positive-recurrent continuous-time Markov chain with countable state space $\S\subset \Z^d$ and stationary distribution $\pi$ that satisfies Assumption \ref{intcnd}. 
We let $P_t$ denote the semigroup operator on $L^2(\pi)$ associated with $X$  defined by $P_tf(x)=\mathbb{E}_x[f(X_t)]$,
and denote the infinitesimal generator by $\mathcal{A}$ and its domain in $L^2(\pi)$ by $Dom(\mathcal{A})$.

The first two results are standard (for example, see \cite{ethier2009markov}) but are required for our proofs and so are included for completeness.  In particular, they allow us to utilize the results from Chapter 1 of \cite{ethier2009markov} (especially Proposition 1.5, which we use throughout).  The third result is required for the proof of Theorem \ref{thm:adlkjfkd}.  It is straightforward, but we were not able to find it in the literature and so include it here.

\begin{lem}\label{lem:contraction}
  $P_t$ is a contraction on $L^2(\pi)$.  That is, for $f\in L^2(\pi)$, we have $\|P_t f\|_{L^2(\pi)}^2 \le \|f\|_{L^2(\pi)}^2$.
\end{lem}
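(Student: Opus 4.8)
The plan is to combine the conditional Jensen inequality with the stationarity of $\pi$ under the semigroup. First I would fix $f \in L^2(\pi)$ and apply Jensen's inequality to the conditional expectation defining $P_t$. Since $P_t f(x) = \mathbb{E}_x[f(X(t))]$ and $P^t(x,\cdot)$ is a probability measure, convexity of $u \mapsto u^2$ gives, pointwise in $x$,
\[
(P_t f(x))^2 = \big(\mathbb{E}_x[f(X(t))]\big)^2 \le \mathbb{E}_x[f(X(t))^2] = P_t(f^2)(x).
\]
This holds for every $x\in\S$, with both sides interpreted as the (possibly infinite) sums $\sum_{y} P^t(x,y)(\cdot)$; when the right-hand side is $+\infty$ the inequality is trivial, and when it is finite Cauchy--Schwarz guarantees $P_t f(x)$ is well-defined so that Jensen applies in the usual sense.

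Next I would integrate this inequality against $\pi$ and invoke stationarity. Summing over $x$ weighted by $\pi(x)$ yields
\[
\|P_t f\|_{L^2(\pi)}^2 = \sum_{x\in\S} (P_t f(x))^2\,\pi(x) \le \sum_{x\in\S} P_t(f^2)(x)\,\pi(x) = \pi\big(P_t(f^2)\big).
\]
Because $f^2 \ge 0$, Tonelli's theorem permits exchanging the order of the double sum defining $\pi(P_t(f^2))$:
\[
\pi\big(P_t(f^2)\big) = \sum_{x\in\S}\pi(x)\sum_{y\in\S} P^t(x,y) f(y)^2 = \sum_{y\in\S} f(y)^2 \sum_{x\in\S}\pi(x) P^t(x,y).
\]
Under Assumption~\ref{intcnd}, $\pi$ is the stationary distribution of $X$, so $\pi P_t = \pi$, i.e.\ $\sum_{x} \pi(x) P^t(x,y) = \pi(y)$ for each $y$. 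Substituting this gives $\pi(P_t(f^2)) = \sum_y f(y)^2 \pi(y) = \|f\|_{L^2(\pi)}^2$, and chaining with the previous display yields $\|P_t f\|_{L^2(\pi)}^2 \le \|f\|_{L^2(\pi)}^2$.

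The only delicate point, and the step where I would be most careful, is the interchange of the double sum. The non-negativity of $f^2$ is essential here: it lets Tonelli's theorem apply regardless of the summability of individual terms, and it is exactly this that turns the stationarity relation $\pi P_t = \pi$ into the identity $\pi(P_t(f^2)) = \pi(f^2)$ even for $f$ merely in $L^2(\pi)$ rather than bounded or compactly supported. No regularity beyond the existence of the stationary distribution, guaranteed by Assumption~\ref{intcnd}(iv), is required.
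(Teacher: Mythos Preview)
Your argument is correct and is exactly the standard proof of this fact. Note, however, that the paper does not actually supply its own proof of this lemma: it simply records the statement as a standard result (citing \cite{ethier2009markov}) needed later, so there is nothing to compare against beyond observing that your Jensen-plus-stationarity argument is the expected one.
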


\begin{lem}\label{lem:stongcts}
$P_t$  is strongly continuous on  $L^2(\pi)$.  That is, for $f \in L^2(\pi)$, we have $\lim_{t \to 0} P_t f = f$, in $L^2(\pi)$.
\end{lem}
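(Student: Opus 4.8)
The plan is to establish strong continuity first on the dense subspace $\mathcal{B}_c$ of compactly (i.e., finitely) supported functions, and then bootstrap to all of $L^2(\pi)$ using the contraction property from Lemma~\ref{lem:contraction}. The density of $\mathcal{B}_c$ in $L^2(\pi)$ is immediate on a countable state space: for $f \in L^2(\pi)$ and an increasing sequence of finite sets $K_n \uparrow \mathbb{S}$, the truncations $f\mathbbm{1}_{K_n}$ satisfy $\|f - f\mathbbm{1}_{K_n}\|_{L^2(\pi)}^2 = \sum_{x \notin K_n} f(x)^2\,\pi(x) \to 0$ as the tail of a convergent series, so no separate argument is needed there.

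First I would fix $f \in \mathcal{B}_c$ and show $\|P_t f - f\|_{L^2(\pi)} \to 0$ as $t \to 0$. For each fixed $x$, since $f$ vanishes off a finite set we may write $P_t f(x) = \sum_{z} P^t(x,z) f(z)$ as a \emph{finite} sum, and the continuity of the transition function at $t = 0$, with $P^0(x,z) = \delta_{xz}$ (guaranteed by the non-explosivity in part (iv) of Assumption~\ref{intcnd}), gives the pointwise limit $P_t f(x) \to f(x)$. To upgrade this to $L^2(\pi)$ convergence I would invoke dominated convergence over $\mathbb{S}$: since $\|P_t f\|_\infty \le \|f\|_\infty$, we have the uniform bound $|P_t f(x) - f(x)|^2 \le 4\|f\|_\infty^2$, and $x \mapsto 4\|f\|_\infty^2\,\pi(x)$ is summable because $\pi$ is a probability measure. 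Hence $\|P_t f - f\|_{L^2(\pi)}^2 = \sum_x |P_t f(x) - f(x)|^2\,\pi(x) \to 0$.

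Finally I would extend to arbitrary $f \in L^2(\pi)$ by a standard three-term argument. Given $\varepsilon > 0$, choose $g \in \mathcal{B}_c$ with $\|f - g\|_{L^2(\pi)} < \varepsilon$. Then the triangle inequality together with Lemma~\ref{lem:contraction} applied to $f - g$ yields
\[
\|P_t f - f\|_{L^2(\pi)} \le \|P_t(f-g)\|_{L^2(\pi)} + \|P_t g - g\|_{L^2(\pi)} + \|g - f\|_{L^2(\pi)} \le 2\varepsilon + \|P_t g - g\|_{L^2(\pi)}.
\]
Letting $t \to 0$ and using the previous step gives $\limsup_{t\to 0}\|P_t f - f\|_{L^2(\pi)} \le 2\varepsilon$, and since $\varepsilon$ was arbitrary the conclusion follows. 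The only genuinely delicate input is the pointwise continuity $P^t(x,z) \to \delta_{xz}$ as $t \downarrow 0$; this is precisely where Assumption~\ref{intcnd} does the work, since non-explosivity makes the minimal transition function honest and continuous at the origin, after which the dominated convergence and density steps are routine.
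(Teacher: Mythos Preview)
Your proof is correct and follows the standard route; there is nothing to compare against, because the paper does not actually prove Lemma~\ref{lem:stongcts}. It merely states the lemma in the appendix and remarks that it is ``standard (for example, see \cite{ethier2009markov})'' and included for completeness. So your argument fills in what the paper leaves as a citation.

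One small attribution point: the right-continuity $P^t(x,z)\to\delta_{xz}$ as $t\downarrow 0$ does not really require non-explosivity (part (iv)); it already follows from part (ii) of Assumption~\ref{intcnd}, since $q_x<\infty$ gives $P^t(x,x)\ge e^{-q_x t}\to 1$, which forces $P^t(x,z)\to 0$ for $z\ne x$. Non-explosivity is what makes the transition function honest for all $t>0$, but you only need stability of states for the limit at the origin. This does not affect the validity of your proof.
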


\begin{lem}\label{lem:uniformconv}
 For $f \in Dom(\mathcal{A})$ with $\|f\|_\infty<\infty$, the sum
\[
\sum_{x\in\mathbb{S}}\pi(x)  P_tf(x)\,\mathcal{A}P_tf(x)
\]
converges uniformly over $t\ge0$.
\end{lem}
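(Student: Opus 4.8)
The plan is to establish the uniform Cauchy criterion for the partial sums over finite subsets of $\mathbb{S}$, using an $L^2$ Cauchy--Schwarz estimate together with the $L^2$-contraction of the semigroup, rather than a pointwise dominating bound. First I would record the two facts that make this possible. Since $P_tf(x)=\mathbb{E}_x[f(X(t))]$, we have $\|P_tf\|_\infty\le\|f\|_\infty<\infty$ for every $t\ge0$. Moreover, because $f\in Dom(\mathcal{A})$, \cite[Proposition 1.5]{ethier2009markov} gives $P_tf\in Dom(\mathcal{A})$ with $\mathcal{A}P_tf=P_t\mathcal{A}f$; in particular $g:=\mathcal{A}f$ belongs to $L^2(\pi)$ and $P_tg(x)=\mathbb{E}_x[g(X(t))]$.

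Next I would estimate the tail of the series uniformly in $t$. Using $|P_tf(x)|\le\|f\|_\infty$, for any finite sets $F\subseteq G\subset\mathbb{S}$ one has
\begin{align*}
\sum_{x\in G\setminus F}\big|\pi(x)\,P_tf(x)\,\mathcal{A}P_tf(x)\big|
&\le \|f\|_\infty \sum_{x\in G\setminus F}\pi(x)\,|P_tg(x)|\\
&\le \|f\|_\infty \Big(\sum_{x\in G\setminus F}\pi(x)\Big)^{1/2}\Big(\sum_{x\in G\setminus F}\pi(x)\,(P_tg(x))^2\Big)^{1/2},
\end{align*}
where the last line is Cauchy--Schwarz applied to the factorization $\pi(x)=\pi(x)^{1/2}\cdot\pi(x)^{1/2}$. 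The second factor is at most $\|P_tg\|_{L^2(\pi)}$, and Lemma \ref{lem:contraction} (contraction of $P_t$ on $L^2(\pi)$) yields $\|P_tg\|_{L^2(\pi)}\le\|g\|_{L^2(\pi)}=\|\mathcal{A}f\|_{L^2(\pi)}$. Hence
\[
\sum_{x\in G\setminus F}\big|\pi(x)\,P_tf(x)\,\mathcal{A}P_tf(x)\big|\le \|f\|_\infty\,\|\mathcal{A}f\|_{L^2(\pi)}\Big(\sum_{x\in G\setminus F}\pi(x)\Big)^{1/2},
\]
and, crucially, the right-hand side does not depend on $t$.

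Finally, since $\sum_{x\in\mathbb{S}}\pi(x)=1<\infty$, the tails $\sum_{x\notin F}\pi(x)$ can be made arbitrarily small by taking $F$ large. Given $\varepsilon>0$, I would pick a finite $F_0$ with $\sum_{x\notin F_0}\pi(x)<\varepsilon^2/(1+\|f\|_\infty^2\|\mathcal{A}f\|_{L^2(\pi)}^2)$; then for all finite $F,G$ with $F_0\subseteq F\subseteq G$ and all $t\ge0$ the displayed bound falls below $\varepsilon$. This is exactly the uniform Cauchy criterion, so the series converges uniformly over $t\ge0$. The one point requiring care—and the reason a naive Weierstrass $M$-test with the candidate dominating function $2\|f\|_\infty^2\,\pi(x)q_x$ is the wrong tool—is that $\mathcal{A}P_tf(x)$ admits no $t$-uniform, summable pointwise bound without assuming $\sum_x\pi(x)q_x<\infty$, which is not part of Assumption \ref{intcnd}. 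The Cauchy--Schwarz step circumvents this precisely by absorbing $\mathcal{A}P_tf=P_t\mathcal{A}f$ into an $L^2(\pi)$-norm that contraction controls uniformly in $t$, leaving behind only the genuinely summable weight $\pi$.
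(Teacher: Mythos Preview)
Your proof is correct and follows essentially the same route as the paper's: both use $|P_tf|\le\|f\|_\infty$, the identity $\mathcal{A}P_tf=P_t\mathcal{A}f$, a Cauchy--Schwarz estimate on the tail, and the $L^2(\pi)$-contraction of $P_t$ to obtain the $t$-independent bound $\|f\|_\infty\|\mathcal{A}f\|_{L^2(\pi)}\big(\sum_{x\notin F}\pi(x)\big)^{1/2}$. The only cosmetic difference is that the paper fixes an increasing exhaustion $K_n\uparrow\mathbb{S}$ and bounds $|f(t)-f_n(t)|$, whereas you phrase it via the uniform Cauchy criterion over general finite $F\subseteq G$; the substance is identical.
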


\begin{proof}
Let $K_n\subset K_{n+1}$ be an increasing sequence of compact sets with $\cup_{n} K_n = \mathbb{S}$.  Denote $f(t) =  \sum_{x\in\mathbb{S}}\pi(x)  P_tf(x)\,\mathcal{A}P_tf(x)$ and $f_n(t) = \sum_{x\in K_n}\pi(x)  P_tf(x)\,\mathcal{A}P_tf(x)$.  Also denote $a_n = \sqrt{\sum_{x\in K_n^c} \pi(x)}$, and note that $a_n \to 0$, as $n\to \infty$.  Then, because $|P_t f(x)| \le \|f\|_\infty$ and $\mathcal{A} P_t f = P_t \mathcal{A} f$ (see \cite[Proposition 1.5]{ethier2009markov}), we have
\begin{align*}
    |&f(t) - f_n(t)| \le  \sum_{x\in K_n^c}\pi(x) | P_tf(x)| \,|\mathcal{A}P_tf(x)| \\
    &\le \sqrt{ \sum_{x\in K_n^c}\pi(x) |P_t f(x)|^2}\sqrt{ \sum_{x\in K_n^c}\pi(x) |P_t \mathcal A  f(x)|^2}\tag{Cauchy-Schwarz}\\
    &\le \|f\|_\infty \sqrt{ \sum_{x\in K_n^c}\pi(x) }\sqrt{ \sum_{x\in \mathbb{S}}\pi(x) |P_t \mathcal A  f(x)|^2}\tag{$|P_t f(x)|\le \|f\|_\infty$ and converted $K_n^c$ to $\mathbb{S}$}\\
    &\le \left( \|f\|_\infty \|\mathcal{A} f\|_{L^2(\pi)}\right) \cdot a_n,
\end{align*}
where in the final inequality we used that $P_t$ is a contraction on $L^2(\pi)$.  Hence, the convergence is uniform in $t\ge 0$. 
\end{proof}

\section{Proof of Lemma \ref{lemma:compact_f}}
\label{appendix:proof_1234}

\begin{proof}[Proof of Lemma \ref{lemma:compact_f}]
Let $f\in \mathcal{B}_c$ and let $K\subset \mathbb{S}$ denote the support of $f$.  We write $|K|$ for the number of elements in $K$ (which is finite). Let 
\[
    C_1 = |K|\cdot \|f\|_\infty \max_{z \in K} \sup_{x\in \mathbb{S}\setminus \{z\}} q(x,z) < \infty,
\]
which is finite because of condition (iii) of Assumption \ref{intcnd}.  Next, let
\[
    C_2 =|K| \cdot  \|f\|_\infty \left(\max_{x\in K} q_x \right)<\infty,
\]
which is finite by condition (ii) of Assumption \ref{intcnd}.  Let $\hat{\mathcal{A}}$ be the linear operator defined by \eqref{Def:Generator}. If $f\in Dom(\mathcal{A})$ then $\mathcal{A}=\hat{\mathcal{A}}$, and that is what we aim to prove.
For $x \in \mathbb{S}$
\begin{align*}
    |\hat{\mathcal{A}} f(x)| &\le \sum_{z\in \mathbb{S}\setminus \{x\}} |f(z) - f(x)| q(x,z)\le \sum_{z\in K} |f(z)| q(x,z) + |f(x)| \sum_{z\in \mathbb{S}\setminus \{x\}} q(x,z)\le C_1 + C_2.
\end{align*}

 We now show that $\mathcal{B}_c\subset Dom(\mathcal{A})$. Let $f\in \mathcal{B}_c$.
By \cite[Theorem 4]{Nonexplo}, we have that 
\begin{align*}
    f(X(t)) - f(X(0)) - \int_0^t \hat{\mathcal{A}} f(X(s)) ds
\end{align*}
is a $\{\mathcal F^X_t\}$-martingale, where $\{\mathcal{F}_t^X\}$ is the filtration generated by $(X(t))_{t \ge 0}$. 
Therefore, for $x \in \mathbb{S}$,
\begin{align}\label{eq:expression1}
    \frac{\mathbb{E}_x[f(X(t))] - f(x)}{t} - \hat{\mathcal{A}}f(x) &= \mathbb{E}_x \left[ \frac{1}{t} \int_0^t( \hat{\mathcal{A}}f (X(s)) - \hat{\mathcal{A}}f(x) ) ds\right],
\end{align}
which converges to zero, as $t \to 0$, since $\hat{\mathcal{A}} f(X(s)) \to \hat{\mathcal{A}} f(x)$ almost surely by condition (ii) of Assumption~\ref{intcnd} and because the integrand is bounded by $2\|\hat{\mathcal{A}} f\|_\infty\leq 2(C_1+C_2)$.  Moreover, 
\begin{align*}
    \sum_{x\in \mathbb{S}} \pi(x) \left(  \hat{\mathcal{A}} f(x) - \frac{\mathbb{E}_x[f(X(t))] - f(x)}{t} \right)^2 \to 0, \quad \text{ as } t \to 0,
\end{align*}
since we have pointwise convergence (in $x\in \mathbb{S}$) from above and, again,  a   uniform bound on the  right-hand side of \eqref{eq:expression1} (from the uniform bound on $\hat{\mathcal{A}}f$).
\end{proof}

\section{Proof of Lemma \ref{lem:eff}} \label{appendix:proof_9808097}

\begin{proof}[Proof of Lemma \ref{lem:eff}]
Combining Assumption~\ref{intcnd} and the boundedness of $\|f\|_\infty$ demonstrates that the sum in the definition of $\EE^*(f)$ converges absolutely,
\begin{align*}
    \sum_{x,z\in \mathbb{S}}\big(f(x)-f(z)\big)^2\,\pi(x)\,q(x,z)\le 4 \|f\|_\infty^2 \sum_{x\in \mathbb{S}}  q_x \pi(x) <\infty,
\end{align*}
as does the sum for $\EE(f,f)$,
\begin{align*}
    \sum_{x,z \in \mathbb{S}}|f(x)(f(z)-f(x))\pi(x) q(x,z)|&\le 2\|f\|_\infty^2 \sum_{x\in \mathbb{S}} q_x \pi(x)<\infty.
\end{align*}
Hence, we can rearrange the terms of the sums to find the following,
\begin{align}
    2\EE(f,f)-2\EE^*(f) &= -2\sum_{x,z \in \mathbb{S}}f(x)(f(z)-f(x))\pi(x) q(x,z)-\sum_{x,z \in \mathbb{S}}(f(x)-f(z))^2 \pi(x) q(x,z) \notag\\
    %&= \sum_{ x,z \in \mathbb{S}}(f(x)-f(z))\left (f(x)+f(z) \right )\pi(x) q(x,z) \notag\\
    &= \sum_{x,z\in \mathbb{S}}f(x)^2\pi(x) q(x,z) - \sum_{x,z\in \mathbb{S}}f(z)^2\pi(x) q(x,z) \notag\\
    &= \sum_{x\in \mathbb{S}}f(x)^2 \pi(x)\sum_{z\in \mathbb{S}}  q(x,z) - \sum_{x,z \in \mathbb{S}}f(z)^2\pi(x) q(x,z) \notag\\
    &= \sum_{x\in \mathbb{S}}f(x)^2 \sum_{z\in \mathbb{S}} \pi(z) q(z,x) - \sum_{x,z \in \mathbb{S}}f(z)^2\pi(x) q(x,z) \label{eq:key finding}\\
    &= \sum_{x,z \in \mathbb{S}}f(x)^2 \pi(z) q(z,x) - \sum_{x,z \in \mathbb{S}}f(z)^2\pi(x) q(x,z)=0, \notag
\end{align} 
where to get the equality \eqref{eq:key finding} we used \eqref{eq:equilibrium}.
Hence $\EE(f,f)=\EE^*(f)$.
\end{proof}

\section{Deterministic model and complex-balanced systems}\label{sec:det}
Given a reaction network $(\Sp,\C,\Re)$, a \emph{(deterministic) kinetics} is an assignment of a function $\lambda_{y\to y'}: \R^d_{\ge0} \to \R_{\ge0}$ to each reaction $y\to y'\in \Re$.  We then call $(\Sp,\C,\Re,\{\lambda_{y\to y'}\})$ a \emph{deterministic reaction system}, and the associated ordinary differential equation governing the dynamics of the model is 
\begin{align}\label{eq:DetODE}
x(t) = x(0) + \sum_{y\to y'\in \Re} (y'-y) \int_0^t \lambda_{y\to y'}(x(s)) ds,    
\end{align}
where we recall that $y,y'$ are vectors in $\Z^d_{\ge 0}$.  Note that $y'-y$ is the vector describing the net gain of each species due to one instance of the reaction $y\to y'$. 

A popular choice of kinetics is given by \emph{(deterministic) mass-action kinetics}, where the form of the kinetics is given by
\[
    \lambda_{y\to y'}^{D}(x)=\kappa_{y\to y'} \prod_{i=1}^d x_i^{y_i} := \kappa_{y\to y'} x^y,
\]
for some positive constant $\kappa_{y\to y'}$, called a \emph{reaction constant}, where we take $0^0=1$ and where we define the notation $x^y$ above.  We denote $\K = \{\kappa_{y\to y'}\}$ and in the case of a deterministic mass-action system we write the system as $(\Sp,\C,\Re,\K)$ as opposed to $(\Sp,\C,\Re,\{\lambda_{y\to y'}\})$.  In the reaction graphs, the rate constants are typically placed next to the reaction arrow as in  $y\xrightarrow{\kappa_{y\to y'}} y'$ when we need to incorporate them. See Figure \ref{figure:EnvZ_model1} for an example of this methodology.
 
A fixed-point of \eqref{eq:DetODE} is said to be a \emph{complex-balanced equilibrium} if, at equilibrium, each complex has the following property: the total flux into the complex (according to the reaction graph) is equal to the total flux out of the complex (according to the reaction graph).  

\begin{defn}\label{def:CB}
A deterministic mass-action system $(\Sp,\C,\Re,\K)$ is \emph{complex-balanced} if the system \eqref{eq:DetODE} 
admits a positive vector $c \in \R^d_{> 0}$ such that for each complex $y\in \C$,
\begin{equation}\label{eq:cb}
    \sum_{\substack{y'\in \C \\ y\to y' \in \Re}} \kappa_{y\to y'}c^{y} = \sum_{\substack{y'\in \C \\ y'\to y \in \Re}} \kappa_{y'\to y}c^{y'}. 
\end{equation}
\end{defn}
It is shown in \cite{HornJack72} that a vector $c$ satisfying \eqref{eq:cb} is necessarily a steady state of \eqref{eq:DetODE} and is called a \emph{complex-balanced equilibrium}. In \cite{Feinberg72, HornJack72, Horn72} it is further shown that if a complex-balanced equilibrium exists, then necessarily all positive steady states of the mass-action system are complex-balanced. Sufficient and necessary condition for complex-balancing are also given, and the stability of complex-balanced equilibria is studied.

\end{document}